\title{Generalized Schur function determinants using the Bazin identity}
\author{Jang Soo Kim}
\address{
Department of Mathematics, Sungkyunkwan University, Suwon 16420,
South Korea}
\email{jangsookim@skku.edu}
\author{Meesue Yoo}
\address{
Department of Mathematics, Chungbuk National University, Cheongju 28644,
South Korea}
\email{meesueyoo@chungbuk.ac.kr (\Letter)}
\date{\today}
\thanks{The first author was supported by NRF grants \#2019R1F1A1059081 and \#2016R1A5A1008055.
The second author was supported by NRF grant \#2020R1F1A1A01064138.}
\keywords{Jacobi--Trudi formula, Schur function, Bazin identity, Giambelli formula}
\subjclass[2010]{05E05}
\date{\today}
\newtheorem{thm}{Theorem}[section]
\newtheorem{lem}[thm]{Lemma}
\newtheorem{cor}[thm]{Corollary}
\theoremstyle{definition}
\newtheorem{exam}[thm]{Example}
\newtheorem{defn}[thm]{Definition}
\newtheorem{remark}[thm]{Remark}
\numberwithin{equation}{section}
\newcommand\Cont{\operatorname{Cont}}
\newcommand\ZZ{\mathbb{Z}}
\newcommand\ts{\mathbf{S}}
\newcommand\lm{{\lambda/\mu}}
\newcommand\nl{{\nu/\lambda}}
\newcommand\inv{\operatorname{inv}}
\newcommand\Par{\mathrm{Par}}
\renewcommand\vec[1]{\mathbf{#1}}
\definecolor{oxfordblue}{rgb}{0.0, 0.13, 0.28}
\definecolor{chartreuse(web)}{rgb}{0.5, 1.0, 0.0}
\definecolor{etonblue}{rgb}{0.59, 0.78, 0.64}
\definecolor{frenchblue}{rgb}{0.0, 0.45, 0.73}
\definecolor{cerulean}{rgb}{0.0, 0.48, 0.65}
\definecolor{asparagus}{rgb}{0.53, 0.66, 0.42}
\definecolor{dredcolor}{rgb}{0.9,0.3,0.4}
\definecolor{dbluecolor}{rgb}{0.01,0.02,0.7}
\begin{document}

\begin{abstract}
  In the literature there are several determinant formulas for Schur functions:
  the Jacobi--Trudi formula, the dual Jacobi--Trudi formula, the Giambelli
  formula, the Lascoux--Pragacz formula, and the Hamel--Goulden formula, where
  the Hamel--Goulden formula implies the others. In this paper we use an
  identity proved by Bazin in 1851 to derive determinant identities involving
  Macdonald's 9th variation of Schur functions. As an application we prove a
  determinant identity for factorial Schur functions conjectured by Morales,
  Pak, and Panova. We also obtain a generalization of the Hamel--Goulden
  formula, which contains a result of Jin, and prove a converse of the
  Hamel--Goulden theorem and its generalization.
\end{abstract}

\maketitle


\section{Introduction}

The Schur functions $s_\lambda$ are an important family of symmetric functions.
They form a linear basis of the space of symmetric functions and have
connections to different areas of mathematics including combinatorics and
representation theory. Schur functions have been extensively studied and there
are numerous generalizations and variations of them in the literature. In
particular, Macdonald \cite{Macdonald_Schur} introduced nine variations of Schur
functions. Macdonald's 9th variation of Schur functions generalize Schur
functions and many of their variations. The main focus of this paper is to find
determinant identities for Macdonald's 9th variation of Schur functions.

We briefly review several known determinant formulas for Schur functions. The
classical Jacobi--Trudi formula and its dual formula express a Schur function
$s_\lambda$ as a determinant in terms of complete homogeneous symmetric functions
$h_k$ and elementary symmetric functions $e_k$:
\[
  s_\lambda = \det \left( h_{\lambda_i+j-i} \right)_{i,j=1}^{\ell(\lambda)}, \qquad
  s_\lambda = \det \left( e_{\lambda'_i+j-i} \right)_{i,j=1}^{\ell(\lambda')},
\]
where $\ell(\lambda)$ is the number of parts in the partition $\lambda$ and
$\lambda'$ is the transpose of $\lambda$. Observe that $h_k$ and $e_k$ are also
Schur functions whose shapes are partitions with one row and one column,
respectively. The Giambelli formula \cite{Giambelli} and the Lascoux--Pragacz
formula \cite{Lascoux1988} express a Schur function as a determinant of Schur
functions whose shapes are hooks and border strips, respectively. Using
so-called outside decompositions, Hamel and Goulden \cite{Hamel_1995} found a
determinant formula for a Schur function, which generalizes all of the
aforementioned formulas. Chen, Yan, and Yang \cite{Chen_2005} restated the
Hamel--Goulden formula using certain border strips called cutting strips, and
showed that all these formulas are equivalent up to simple matrix operations.
Recently, Jin \cite{Jin_2018} generalized the Hamel--Goulden formula by
considering more general border strips called thickened border strips.

\medskip

In this paper, inspired by the original proof of the Lascoux--Pragacz formula in
\cite[Section~2]{Lascoux1988}, we find determinant identities for Macdonald's
9th variation of Schur functions using the Bazin identity \cite{Bazin}. Our
results restricted to Schur functions are also new and include the
Hamel--Goulden formula \cite{Hamel_1995} and its generalization due to Jin
\cite{Jin_2018}. As an application we prove a determinant identity (after some
correction) for factorial Schur functions conjectured by Morales, Pak, and
Panova \cite{MPP2}. Another special case of our results is the Hamel--Goulden
formula for Macdonald's 9th variation of Schur functions. We note that recently
Bachmann and Charlton \cite{Bachmann_2020}, and Foley and King \cite{Foley20}
also proved this using the Lindstr\"om--Gessel--Viennot lemma.

To illustrate, we compare one of our results with the Lascoux--Pragacz formula
\cite{Lascoux1988}.

\begin{thm}[Lascoux--Pragacz formula]\label{thm:LP}
  Suppose that $\mu\subseteq \lambda$ and $\theta=(\theta_1,\dots,\theta_k)$ is
  the Lascoux--Pragacz decomposition of $\lm$. Then
\[
s_{\lambda/\mu} = \det \left( s_{\lambda^0[p_j,q_i]}\right)_{i,j=1}^k, 
\]
where $p_i$'s and $q_i$'s are the contents of the starting and ending cells of
$\theta_i$, respectively.
\end{thm}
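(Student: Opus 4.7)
The plan is to follow the strategy of Lascoux and Pragacz \cite{Lascoux1988}, deriving the identity from the Jacobi--Trudi formula combined with the classical Bazin identity. The Bazin identity relates a determinant whose entries are maximal minors of a fixed auxiliary matrix (each minor obtained by varying a designated pair of columns) to a single big determinant times a common factor; this is the algebraic engine that will convert a ``determinant of determinants'' into one determinant.

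First, I would express $s_{\lambda/\mu}$ via the Jacobi--Trudi formula as $\det(h_{\lambda_i-\mu_j-i+j})_{i,j=1}^{\ell(\lambda)}$, and in the same spirit express each entry $s_{\lambda^0[p_j,q_i]}$ on the right-hand side as its own Jacobi--Trudi determinant in the complete homogeneous symmetric functions. The right-hand side is then a $k\times k$ determinant whose entries are themselves determinants, and the goal is to collapse this nested expression to a single determinant matching the Jacobi--Trudi expansion of $s_{\lambda/\mu}$.

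The core step is to set up a single auxiliary matrix $M$ whose rows/columns are indexed by all of the $h$-indices that appear across the various small Jacobi--Trudi matrices. These indices separate into ``fixed'' ones, common to every strip $\theta_i$ and arising from the shared ambient shape $\lambda^0$, and ``variable'' ones, parameterized by the starting contents $p_j$ and ending contents $q_i$. In this presentation, the $k\times k$ determinant of Schur functions on the right-hand side becomes exactly the kind of $k\times k$ determinant of maximal minors to which the Bazin identity applies. Invoking Bazin then rewrites it, up to a common nonzero factor given by the ``fixed-column'' determinant, as a single determinant of size $\ell(\lambda)\times \ell(\lambda)$ whose entries are the $h$-terms indexed by the combined fixed and variable rows and columns.

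The main obstacle is the combinatorial bookkeeping: one must verify that the indices produced by this Bazin application exactly reproduce the entries $h_{\lambda_i-\mu_j-i+j}$ of the Jacobi--Trudi matrix for $s_{\lambda/\mu}$, and that the common auxiliary factor equals $1$ (or cancels). This is where the specific structure of the Lascoux--Pragacz decomposition is essential: the collections $\{p_i\}$ and $\{q_i\}$ of starting and ending contents of the $\theta_i$ biject canonically with the columns and rows of the Jacobi--Trudi matrix of $\lambda/\mu$, because consecutive border strips in the decomposition partition the cells along the outer border of $\lm$ in a shift-compatible way. Once this index correspondence is confirmed by careful enumeration of inner and outer border cells of the skew shape, the Bazin identity yields the claimed equality.
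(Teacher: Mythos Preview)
Your proposal is correct and follows essentially the same route as the paper (and the original Lascoux--Pragacz argument you cite). The paper sets up the auxiliary matrix $N=(h_{\nu_j-j+i})_{i\in\ZZ,\,1\le j\le n}$, takes $\vec a,\vec b,\vec c$ to be the elements of $C_n(\lambda)\setminus C_n(\mu)$, $C_n(\mu)\setminus C_n(\lambda)$, and $C_n(\lambda)\cap C_n(\mu)$ respectively, and applies Bazin; with $\nu=\lambda$ the common factor $[\vec a\sqcup\vec c]^{k-1}$ is $s_{\lambda/\lambda}^{k-1}=1$, exactly as you anticipated, and the combinatorial bookkeeping you describe is the content of Lemma~\ref{lem:C/C}, which identifies $\{q_i\}=C_n(\lambda)\setminus C_n(\mu)$ and $\{p_j-1\}=C_n(\mu)\setminus C_n(\lambda)$.
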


In the theorem above, the shape $\lambda^0[p_j,q_i]$ in each entry is a border
strip. See Sections~\ref{sec:definitions} and \ref{sec:lasc-prag-kreim} for the
undefined terms. The following theorem will be proved in
Section~\ref{sec:lasc-prag-kreim}.

\begin{thm}\label{thm:main_LP_intro}
  Let $\lambda$, $\mu$, and $\nu$ be partitions. Suppose that $\mu\subseteq \lambda$
  and $\theta=(\theta_1,\dots,\theta_k)$ is the Lascoux--Pragacz decomposition
  of $\lm$. Then we have
  \begin{align}
    \label{eq:main1intro}
    \ts_{\lambda/\nu}^{k-1} \ts_{\mu/\nu}
    &= \det \left( (-1)^{\chi(p_j>q_i)} \ts_{\lambda(q_i,p_j-1)/\nu}\right)_{i,j=1}^k,  \\
    \label{eq:main2intro}
    \ts_{\nu/\lambda}^{k-1} \ts_{\nu/\mu}
    &= \det \left( (-1)^{\chi(p_j>q_i)} \ts_{\nu/\lambda(q_i,p_j-1)}\right)_{i,j=1}^k.
  \end{align}
\end{thm}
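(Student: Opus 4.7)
My plan is to derive both identities from Bazin's determinantal identity, mirroring the original Lascoux--Pragacz proof of Theorem~\ref{thm:LP}. Bazin's identity states that for any $k \times k$ matrix $A$ with columns $a_1,\ldots,a_k$ and any column vectors $b_1,\ldots,b_k$ of the same length,
\[
\det\bigl(\det(a_1,\ldots,a_{i-1},b_j,a_{i+1},\ldots,a_k)\bigr)_{i,j=1}^k = (\det A)^{k-1}\det(b_1,\ldots,b_k);
\]
the shape of the left-hand side of \eqref{eq:main1intro}, namely a $(k-1)$-st power times a single extra factor, matches the right-hand side of Bazin's identity exactly, so my task reduces to choosing $A$ and the $b_j$'s so that $\det A = \ts_{\lambda/\nu}$, $\det(b_1,\ldots,b_k) = \ts_{\mu/\nu}$, and each column-replaced minor on the left equals $(-1)^{\chi(p_j>q_i)}\ts_{\lambda(q_i,p_j-1)/\nu}$.

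First I would invoke the Jacobi--Trudi type formula for Macdonald's $9$th variation (already established earlier in the paper) to write $\ts_{\lambda/\nu}$ as the determinant of a matrix whose rows and columns are indexed by integer contents rather than by row indices of $\lambda$. Using the Lascoux--Pragacz decomposition $\theta=(\theta_1,\dots,\theta_k)$ of $\lambda/\mu$, I would then single out $k$ distinguished rows of this matrix corresponding to the ending contents $q_1,\dots,q_k$ and realize the other $k$ columns as being indexed by the starting contents $p_1,\dots,p_k$; this is possible precisely because $\lambda$ and $\mu$ differ exactly along the border strips $\theta_i$, whose starting and ending cells carry the contents $p_j$ and $q_i$. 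In this way I obtain $\det A=\ts_{\lambda/\nu}$ and $\det(b_1,\dots,b_k)=\ts_{\mu/\nu}$ with $A$ and the $b_j$'s differing only in the $k$ columns governed by the decomposition. Substituting into Bazin's identity and reading the column-replaced minor at position $(i,j)$ backwards through the Jacobi--Trudi formula, I identify it with $\ts_{\lambda(q_i,p_j-1)/\nu}$ up to the sign $(-1)^{\chi(p_j>q_i)}$, which arises when the new tuple of contents must be sorted into the increasing order demanded by Jacobi--Trudi.

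Identity \eqref{eq:main2intro} follows by running the same argument with the dual Jacobi--Trudi formula in the $9$th variation (based on the $e$-matrix rather than the $h$-matrix), or equivalently by conjugating all shapes in \eqref{eq:main1intro} and using the transposition symmetry of $\ts$. The step I expect to be most delicate is the identification of the correct $k\times k$ block of the Jacobi--Trudi matrix dictated by the Lascoux--Pragacz decomposition, so that $\det A$, $\det(b_1,\dots,b_k)$, and each column-swapped minor all factor uniformly through the Jacobi--Trudi formulas of the corresponding skew shapes; keeping track of the sign $(-1)^{\chi(p_j>q_i)}$ produced by the column reordering is a closely related bookkeeping task. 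Once these two points are settled, both identities fall out as direct instances of Bazin.
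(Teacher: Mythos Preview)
Your overall strategy is the paper's: apply Bazin to the Jacobi--Trudi matrix for $\ts$ and read each swapped minor back as a $\ts$ of a modified shape. Two points, however, need correction.

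First, the Bazin identity you wrote (for a $k\times k$ matrix $A$) is too special. The Jacobi--Trudi determinant for $\ts_{\lambda/\nu}$ is $n\times n$, and only $k$ of its $n$ rows differ from those for $\ts_{\mu/\nu}$; the remaining $n-k$ rows, indexed by $C_n(\lambda)\cap C_n(\mu)$, form a common block $\vec c$ that must be carried through every minor. The form you need is
\[
[\vec a\sqcup\vec c]^{k-1}[\vec b\sqcup\vec c]=(-1)^{\binom{k}{2}}\det\bigl([b_j\sqcup(\vec a\setminus a_i)\sqcup\vec c]\bigr)_{i,j=1}^k.
\]
To link this to the $p_i,q_i$ you also need the combinatorial fact (Lemma~\ref{lem:C/C}) that $C_n(\lambda)\setminus C_n(\mu)=\{q_1,\dots,q_k\}$ and $C_n(\mu)\setminus C_n(\lambda)=\{p_1-1,\dots,p_k-1\}$, and then a sign computation showing that the global factor $(-1)^{\inv(\vec b,\vec a)+\binom{k}{2}}$ cancels against the permutation relating the $p_j-1$ to the sorted $\vec b$. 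That cancellation is where the special structure of the Lascoux--Pragacz decomposition (content intervals are either nested or disjoint) is actually used, and it is not just the per-entry sign $(-1)^{\chi(p_j>q_i)}$.

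Second, and more seriously, your route to \eqref{eq:main2intro} does not work for the $9$th variation. Conjugating shapes takes $\ts_{\lambda/\nu}$ to something indexed by $\lambda'/\nu'$, not to $\ts_{\nu/\lambda}$; there is no involution on the free indeterminates $h_{r,s}$ playing the role of $\omega$. Likewise an $e$-based dual Jacobi--Trudi, even if you set one up, still keeps the larger shape outside and the smaller inside, so it cannot swap the roles of $\nu$ and $\lambda$. What the paper does instead is rerun the identical Bazin argument against a second matrix $N=(h_{\nu_j-j+i,\,i+1})_{i\in\ZZ,\,1\le j\le n}$, whose $n\times n$ minors realize $\ts_{\nu/\rho}$ with $\nu$ fixed on the \emph{outside} and the row selection encoding the inner shape $\rho$; swapping the rows indexed by $C_n(\lambda)$ for those indexed by $C_n(\mu)$ then yields \eqref{eq:main2intro} directly.
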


Here $\ts_{\lm}$ is Macdonald's 9th variation of Schur functions and
$\lambda(q_i,p_j-1)$ is a partition obtained from $\lambda$ by either adding or
deleting a border strip. See Section~\ref{sec:definitions} for their precise
definitions. If $\nu=\lambda$ in \eqref{eq:main2intro} we obtain the
Lascoux--Pragacz formula for $\ts_{\lm}$, thus Theorem~\ref{thm:LP} follows. If
$\nu=\emptyset$ and Macdonald's 9th variation is specialized to the factorial
Schur functions in \eqref{eq:main1intro}, then we obtain a corrected version of
the conjecture of Morales, Pak, and Panova \cite{MPP2}.

Now we recall the reformulation of the Hamel--Goulden formula \cite{Hamel_1995}
due to Chen, Yan, and Yang \cite{Chen_2005}.
  
\begin{thm}[Hamel--Goulden formula]\label{thm:HG}
  Let $\lambda$ and $\mu$ be partitions and let $\gamma$ be a border strip.
  Suppose that $\mu\subseteq \lambda$ and $\theta=(\theta_1,\dots,\theta_k)$ is
  the decomposition of $\lm$ determined by the cutting strip $\gamma$. Then we
  have
\[
s_{\lambda/\mu} = \det \left( s_{\gamma[p_j,q_i]}\right)_{i,j=1}^k,
\]
where $p_i$ and $q_i$ are the contents of the starting and ending cells of
$\theta_i$, respectively.
\end{thm}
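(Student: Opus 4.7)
The plan is to deduce Theorem~\ref{thm:HG} as a specialization of a cutting-strip generalization of Theorem~\ref{thm:main_LP_intro}. Concretely, I expect to establish, for an arbitrary cutting strip $\gamma$ with associated decomposition $\theta = (\theta_1,\dots,\theta_k)$ of $\lm$, an identity of the form
\[
\ts_{\nu/\lambda}^{k-1}\, \ts_{\nu/\mu}
= \det\!\left((-1)^{\chi(p_j>q_i)}\, \ts_{\nu/\lambda_\gamma(q_i,p_j-1)}\right)_{i,j=1}^{k},
\]
where $\lambda_\gamma(q_i,p_j-1)$ is the partition obtained from $\lambda$ by adding or deleting a border strip whose support along $\gamma$ is determined by the pair of contents $(q_i,p_j-1)$. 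Setting $\nu=\lambda$ makes $\ts_{\nu/\lambda}^{k-1}=1$, reduces the left-hand side to $\ts_{\lm}$, and turns each entry of the determinant into, up to sign, the Schur function of the border strip $\gamma[p_j,q_i]$ that was added to or deleted from $\lambda$. Restricting $\ts$ to ordinary Schur functions then yields Theorem~\ref{thm:HG}.

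The central technical step, modeled on Lascoux and Pragacz's argument in \cite[Section~2]{Lascoux1988}, is to apply the Bazin identity to a carefully chosen $k\times n$ matrix $M$ built from $\ts$-generating functions indexed by columns corresponding to the endpoints of potential border strips in $\gamma$. One distinguished $k\times k$ minor of $M$ should factor as $\ts_{\nu/\lambda}^{k-1}\, \ts_{\nu/\mu}$ by concatenating the border strips $\theta_1,\dots,\theta_k$ into the full skew shape $\lm$, while the family of minors produced by Bazin on the complementary side yields precisely the matrix $\bigl((-1)^{\chi(p_j>q_i)}\ts_{\nu/\lambda_\gamma(q_i,p_j-1)}\bigr)_{i,j}$, with the signs arising from the permutation needed to restore partition order.

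The main obstacle will be the combinatorial bookkeeping for arbitrary cutting strips. I will need to verify that $\lambda_\gamma(q_i,p_j-1)$ is always a well-defined object (with $\ts$ vanishing when no valid border strip exists), that the skew shape $\lambda/\lambda_\gamma(q_i,p_j-1)$ is genuinely the border strip $\gamma[p_j,q_i]$ when $p_j>q_i$, and that the starting and ending contents $p_i,q_i$ are tracked correctly with respect to $\gamma$ rather than the Lascoux--Pragacz choice. A secondary check is that after the specialization $\nu=\lambda$ the sign $(-1)^{\chi(p_j>q_i)}$ collapses, since only genuine border-strip subtractions from $\lambda$ contribute nonzero entries and these are exactly the pairs with $p_j>q_i$. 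Once this dictionary between cutting-strip combinatorics and the Bazin minors is in place, Theorem~\ref{thm:HG} follows as an immediate corollary by specialization.
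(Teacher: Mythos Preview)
Your plan has a real gap at the step where you hope to run Bazin directly for an arbitrary cutting strip $\gamma$. The matrix that makes the Bazin argument produce $\ts$'s is the Jacobi--Trudi array $M=(h_{i-\nu_j+j,\nu_j-j+1})$; selecting $n$ of its rows corresponds to choosing a set $C_n(\rho)$, and swapping one row index $a$ for another $b$ yields the minor $\ts_{\nu/\lambda(a,b)}$, where $\lambda(a,b)$ is obtained from $\lambda$ by adding or deleting a border strip along the \emph{outer strip} $\lambda^0$ (or $\lambda^+$). There is no operation ``$\lambda_\gamma(q_i,p_j-1)$'' compatible with these minors: if $\rho\subseteq\lambda$ and $\lambda/\rho$ is a border strip, that strip is forced to lie in $\lambda^0$, so after setting $\nu=\lambda$ the entries can only ever be $\ts_{\lambda^0[p_j,q_i]}$, never $\ts_{\gamma[p_j,q_i]}$ for a genuinely different $\gamma$. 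Even the size of the determinant is wrong in general: for $\lambda=(3,2,1)$, $\mu=\emptyset$, the Lascoux--Pragacz strip gives a $2\times 2$ determinant, while a horizontal $\gamma$ gives the $3\times 3$ Jacobi--Trudi determinant, so no single Bazin application to one fixed matrix can produce both.

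The paper handles exactly this obstruction. It applies Bazin only for the Lascoux--Pragacz choice $\gamma=\lambda^0$ (Theorems~\ref{thm:main} and \ref{thm:main_LP}), and then passes to an arbitrary cutting strip by invoking the result of Chen, Yan, and Yang, which uses only the relation $\ts_\alpha\ts_\beta=\ts_{\alpha\rightarrow\beta}+\ts_{\alpha\uparrow\beta}$ of Lemma~\ref{lem:combine} to show that the Hamel--Goulden determinants for different cutting strips are related by elementary row and column operations. That transformation, not a new Bazin setup tailored to $\gamma$, is the missing ingredient in your outline. (As a side remark, in your last paragraph the nonzero entries after specialization are the pairs with $p_j\le q_i$, not $p_j>q_i$; the sign does collapse to $+1$ on those entries, so your conclusion there survives.)
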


In Theorem~\ref{thm:HG}, $\gamma[p_j,q_i]$ is the set of cells in $\gamma$ whose
contents are in the closed interval $[p_j,q_i]$. See
Sections~\ref{sec:definitions} and \ref{sec:lasc-prag-kreim} for the undefined
terms. Observe that in Theorem~\ref{thm:HG} the $p_i$'s and $q_i$'s are
particular integers determined by $\lambda$, $\mu$, and $\gamma$. A natural
question is whether the determinant in this theorem can represent a skew Schur
function for arbitrary $p_i$'s and $q_i$'s. We show that this is indeed true up
to sign. This may be considered as a converse of the Hamel--Goulden theorem.
More generally, we can take $\gamma$ to be any connected skew shape, not
necessarily a border strip.

\begin{thm}\label{thm:HG3}
  Let $\alpha$ be any connected skew shape. Suppose that $(a_1,\dots,a_k)$ and
  $(b_1,\dots,b_k)$ are sequences of integers such that $\alpha[a_j,b_i]$ is a
  skew shape for all $i$ and $j$. Then either $\det \left(
    s_{\alpha[a_j,b_i]}\right)_{i,j=1}^k = 0$ or there exists a skew shape
  $\rho$ such that
\[
\det \left( s_{\alpha[a_j,b_i]}\right)_{i,j=1}^k = \pm s_{\rho}.
\]
\end{thm}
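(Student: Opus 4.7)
The plan is to permute the rows and columns of the matrix to sort the index sequences, and then to invoke the generalized Hamel--Goulden formula (for an arbitrary connected skew cutting shape, proved earlier in the paper) to identify the sorted determinant with a single skew Schur function.

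First, choose permutations $\sigma$ and $\tau$ of $\{1,\dots,k\}$ so that $b_{\sigma(1)}\le\cdots\le b_{\sigma(k)}$ and $a_{\tau(1)}\le\cdots\le a_{\tau(k)}$. Applying these to the rows and columns multiplies the determinant by $\sgn(\sigma)\sgn(\tau)$. If any two of the $a_j$'s, or any two of the $b_i$'s, coincide, then after sorting the matrix has two equal columns or rows, so the determinant vanishes and we are done. Otherwise the sorted sequences are strictly increasing; write $a_j'=a_{\tau(j)}$ and $b_i'=b_{\sigma(i)}$, so that $\alpha[a_j',b_i']$ is still a skew shape for every $i,j$.

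The key claim is then: whenever $(a_1'<\cdots<a_k')$ and $(b_1'<\cdots<b_k')$ are strictly increasing sequences satisfying this condition, there exists a skew shape $\rho$ admitting a decomposition with cutting shape $\alpha$ into pieces $\theta_1,\dots,\theta_k$ such that $\theta_i$ has starting-cell content $a_i'$ and ending-cell content $b_i'$. Granting the claim, the generalized Hamel--Goulden formula applied to $\rho$ and cutting shape $\alpha$ yields
\[
s_\rho = \det\bigl(s_{\alpha[a_j',b_i']}\bigr)_{i,j=1}^k,
\]
and combining with the sign from sorting gives $\det(s_{\alpha[a_j,b_i]})_{i,j=1}^k=\pm s_\rho$, as required.

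The skew shape $\rho$ is built by placing translates of the strips $\alpha[a_i',b_i']$ on successive anti-diagonals and forming their union, the translations being dictated by the relative positions of the starting/ending contents in $\alpha$. The main obstacle is the combinatorial verification that this union is actually a well-defined connected skew shape whose $\alpha$-decomposition is precisely $(\theta_1,\dots,\theta_k)$: one must check that consecutive strips share the right boundary cells and that no gaps or overlaps occur. The hypothesis that every $\alpha[a_j',b_i']$ (including the off-diagonal ones) is a skew shape is exactly what guarantees this compatibility, since it controls how the strip starting at content $a_j'$ and ending at content $b_i'$ sits inside $\alpha$, hence how the translates of neighbouring $\theta_i$'s meet. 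Once this geometric reconstruction is established, the converse to Hamel--Goulden follows immediately from the forward direction.
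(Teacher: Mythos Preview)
Your overall strategy matches the paper's: sort the sequences and then realize the determinant via the generalized Hamel--Goulden identity. But the version of that identity you invoke does not exist in the paper in the form you use. Theorem~\ref{thm:main_HG} is stated for a \emph{border strip} $\gamma$ together with partitions $\mu\subseteq\lambda\subseteq\nu$ satisfying $\nu/\lambda$-compatibility; the matrix entries are $\ts_{(\gamma\oplus\nl)[p_j,q_i]}$, and the $p_j,q_i$ come from the border-strip decomposition of $\lambda/\mu$ with cutting strip $\gamma$, not from any decomposition of the target shape by the thick shape $\alpha$. To use this with a non-border-strip $\alpha$, the paper takes $\gamma$ to be the inner strip of $\alpha$, builds $\lambda/\mu$ with Lemma~\ref{lem:gamma_to_la}, and then chooses $\nu$ so that $\nu/\lambda$ accounts for the extra cells $\alpha\setminus\gamma$; the hypothesis that every $\alpha[a_j,b_i]$ is a skew shape is precisely what makes $\mu$ and $\gamma$ $\nu/\lambda$-compatible. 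Your proposal skips this reduction to a border strip and appeals to a formula that is not actually available.

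Your construction of $\rho$ is also where the substance lies, and two points are missing. First, the paper shows that nonvanishing of the determinant forces the ballot-type inequality \eqref{eq:dyck}; without it one cannot even guarantee $a_i'\le b_i'$, so placing the strips $\alpha[a_i',b_i']$ on successive diagonals need not make sense. Second, the matching $a_i'\leftrightarrow b_i'$ you impose is generally not realizable: in Lemma~\ref{lem:gamma_to_la} the pairing of starting and ending contents is dictated by the local shape of $\gamma$ (whether $\gamma[b_1,b_1+1]$ is horizontal or vertical), not chosen in advance. Finally, even after Theorem~\ref{thm:main_HG} is applied, the output is $s_{\nu/\mu}\prod_{s} s_{\alpha_s}^{r_s-1}$, a product of skew Schur functions; one still needs the observation that such a product is itself a single skew Schur function to reach the stated conclusion.
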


\medskip

The rest of this paper is organized as follows. In Section~\ref{sec:definitions}
we give basic definitions. In Section~\ref{sec:bazin-sylv-ident} we derive a
determinant identity involving $\ts_{\lm}$ using the Bazin identity.
In Section~\ref{sec:lasc-prag-kreim} we restate the result in the previous
section using border strip decompositions. As an application we prove a
conjecture of Morales, Pak, and Panova \cite{MPP2}. In
Section~\ref{sec:gener-hamel-gould} we prove a generalization of the
Hamel--Goulden formula. In Section~\ref{sec:conv-hamel-gould} we prove Theorem
\ref{thm:HG3}.

\section{Definitions}
\label{sec:definitions}

In this section we give basic definitions which will be used throughout this paper.

A \emph{partition} is a weakly decreasing sequence
$\lambda=(\lambda_1,\dots,\lambda_k)$ of positive integers. Each $\lambda_i>0$
is called a \emph{part} of $\lambda$. The \emph{length} $\ell(\lambda)$ of
$\lambda$ is the number of parts in $\lambda$. For integers $r>\ell(\lambda)$,
we use the convention $\lambda_r=0$. The set of partitions with at most $n$
parts is denoted by $\Par_n$. By appending zeros at the end if necessary we will
write each element $\lambda\in \Par_n$ as $\lambda=(\lambda_1,\dots,\lambda_n)$.

A pair $(i,j)$ of integers is called a \emph{cell}. The \emph{Young diagram} of
a partition $\lambda$ is the set of cells $(i,j)$ with $1\le i\le \ell(\lambda)$
and $1\le j\le \lambda_i$. We will often identity a partition $\lambda$ with its
Young diagram. The Young diagram $\lambda$ is visualized as an array of squares
so that there is a square in row $i$ and column $j$ for each $\lambda\in (i,j)$,
according to the matrix coordinates. See Figure~\ref{fig:yd}.

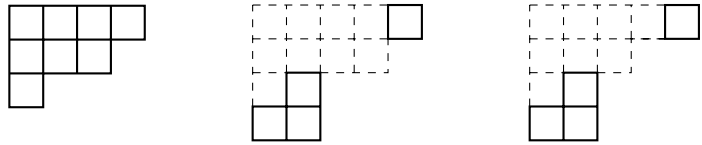
\begin{figure}
  \centering
  \begin{tikzpicture}[scale=.45]
\draw[thick] (0,0)--(1,0)--(1,1)--(3,1)--(3,2)--(4,2)--(4,3)--(0,3)--(0,0)--cycle;
\draw[thick] (0,1)--(1,1)
		(0,2)--(3,2)
		(1,1)--(1,3)
		(2,1)--(2,3)
		(3,2)--(3,3);
\node[] at (0,-.5) {\phantom{1}};
\end{tikzpicture}\qquad\qquad
 \begin{tikzpicture}[scale=.45]
\draw[thick] (0,0)--(2,0)--(2,2)--(1,2)--(1,1)--(0,1)--(0,0)--cycle
		(4,3)--(5,3)--(5,4)--(4,4)--(4,3)--cycle
		(1,0)--(1,1)--(2,1);
\draw[dashed] (0,1)--(0,4)
		(1,2)--(1,4)
		(2,2)--(2,4)
		(3,2)--(3,4)
		(0,4)--(4,4)
		(0,3)--(4,3)
		(0,2)--(1,2)
		(2,2)--(4,2)--(4,3);
\end{tikzpicture}\qquad\qquad
 \begin{tikzpicture}[scale=.45]
\draw[thick] (0,0)--(2,0)--(2,2)--(1,2)--(1,1)--(0,1)--(0,0)--cycle
		(4,3)--(5,3)--(5,4)--(4,4)--(4,3)--cycle
		(1,0)--(1,1)--(2,1);
\draw[dashed] (0,1)--(0,4)
		(1,2)--(1,4)
		(2,2)--(2,4)
		(0,4)--(4,4)
		(0,3)--(4,3)
		(0,2)--(1,2)
		(2,2)--(3,2)--(3,3)--(4,3)
		(3,3)--(3,4);
\end{tikzpicture}
  \caption{The Young diagram of the partition $(4,3,1)$ (left), the skew shape
    $(5,4,2,2)/(4,4,1)$ (middle), and the skew shape $(5,3,2,2)/(4,3,1)$
    (right).}
  \label{fig:yd}
\end{figure}

For two partitions $\lambda$ and $\mu$, we write $\mu\subseteq\lambda$ to mean
that the Young diagram of $\mu$ is contained in that of $\lambda$. A \emph{skew
  shape}, denoted by $\lm$, is a pair $(\lambda,\mu)$ of partitions satisfying
$\mu\subseteq\lambda$. We also consider the skew shape $\lm$ as the
set-theoretic difference $\lambda\setminus\mu$ of their Young diagrams. Note,
however, that when we consider a skew shape $\lm$ we have the information on the
partitions $\lambda$ and $\mu$ as well as the difference $\lambda-\mu$ of their
Young diagrams. For example, the two skew shapes in Figure~\ref{fig:yd} have the
same set of cells but are considered as different skew shapes.

For a cell $x=(i,j)$, the \emph{content} $c(x)$ of $x$ is defined by $c(x)=j-i$.
For a skew shape $\alpha$, we define
\[
\Cont(\alpha) = \{c(x):x\in \alpha\}.
\]
For a skew shape $\alpha$ we define $\alpha+(r,s)$ to be the skew shape obtained
by shifting $\alpha$ by $(r,s)$, i.e.,
\[
\alpha + (r,s) = \{x+(r,s): x\in \alpha\}.
\]

In this paper ``connected'' means edgewise connected. A \emph{connected
  component} of a skew shape $\alpha$ is a maximal connected subset of $\alpha$,
see Figure~\ref{fig:connected}. 

\begin{figure}
  \centering
 \begin{tikzpicture}[scale=.45]
\draw[thick] (0,0)--(0,2)--(2,2)--(2,3)--(3,3)--(3,4)--(6,4)--(6,1)--(4,1)--(4,0)--(0,0)
		(3,3)--(6,3)
		(2,2)--(6,2)
		(0,1)--(4,1)
		(1,0)--(1,2)
		(2,0)--(2,2)
		(3,0)--(3,3)
		(4,1)--(4,4)
		(5,1)--(5,4);
\draw[dashed] (0,2)--(0,4)--(3,4)
		(0,3)--(2,3)
		(1,2)--(1,4)
		(2,3)--(2,4);
\node[] at (0,-.5) {\phantom{1}};
\end{tikzpicture}\qquad\qquad\qquad
 \begin{tikzpicture}[scale=.45]
\draw[thick] (0,0)--(0,2)--(2,2)--(2,3)--(3,3)--(3,1)--(2,1)--(2,0)--(0,0)--cycle
		(0,1)--(2,1)--(2,2)--(3,2)
		(1,0)--(1,2)
		(3,3)--(5,3)--(5,5)--(4,5)--(4,4)--(3,4)--(3,3)
		(4,3)--(4,4)--(5,4);
\draw[dashed] (0,2)--(0,5)--(4,5)
		(0,4)--(3,4)
		(0,3)--(2,3)
		(1,2)--(1,5)
		(2,3)--(2,5)
		(3,4)--(3,5);
\end{tikzpicture}
  \caption{The skew shape on the left is connected, whereas the skew shape on
    the right is disconnected. There are two connected components in the right
    skew shape.}
  \label{fig:connected}
\end{figure}
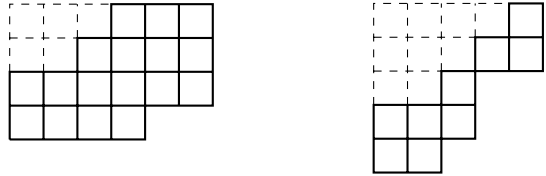

A \emph{border strip} is a connected skew shape that contains no $2\times 2$ block of 
squares. For a border strip $\gamma$, the contents of the starting and ending
cells of $\gamma$ are denoted by $p(\gamma)$ and $q(\gamma)$, respectively:
\[
p(\gamma)=\min(\Cont(\gamma)),\qquad q(\gamma)=\max(\Cont(\gamma)).
\]

For a border strip $\gamma$ and integers $a$ and $b$, we define
\[
\gamma[a,b] = 
\begin{cases}
\{x\in \gamma: a\le c(x)\le b\} & \mbox{if $a\le b$},\\
\emptyset & \mbox{if $a=b+1$}, \\
\mbox{undefined} & \mbox{if $a>b+1$}.
\end{cases}
\]
See Figure~\ref{fig:gamma}. If $\gamma[a,b]$ is undefined, then we define
$\ts_{\gamma[a,b]}$ to be $0$.

\begin{figure}
  \centering
 \begin{tikzpicture}[scale=.5]
\draw[densely dashed] (0,1)--(0,4);
\draw[densely dashed] (1,1)--(1,4);
\draw[densely dashed] (2,2)--(2,4);
\draw[densely dashed] (3,3)--(3,4);
\draw[densely dashed] (4,3)--(4,4);
\draw[densely dashed] (0,4)--(4,4);
\draw[densely dashed] (0,3)--(4,3);
\draw[densely dashed] (0,2)--(2,2);
\draw[densely dashed] (0,1)--(1,1);
\draw[thick] (0,0)--(0,1)--(1,1)--(1,2)--(3,2)--(3,3)--(4,3)--(4,4)--(5,4)--(5,3)--(4,3)--(4,1)--(2,1)--(2,0)--(0,0)--cycle;
\draw[thick] (4,2)--(5,2)--(5,3)--(6,3)--(6,4)--(5,4);
\draw[thick] (1,1)--(4,1);
\draw[thick] (2,2)--(4,2);
\draw[thick] (1,0)--(1,1);
\draw[thick] (2,1)--(2,2);
\draw[thick] (3,1)--(3,3);
\node[] at (4.5, 3.5) {$4$};
\node[] at (5.5, 3.5) {$5$};
\node[] at (3.5, 2.5) {$2$};
\node[] at (4.5, 2.5) {$3$};
\node[] at (1.5, 1.5) {$-1$};
\node[] at (2.5, 1.5) {$0$};
\node[] at (3.5, 1.5) {$1$};
\node[] at (.5, .5) {$-3$};
\node[] at (1.5, .5) {$-2$};
\node[] at (-1.6,2) { $ \gamma=$};
\end{tikzpicture} \qquad
\begin{tikzpicture}[scale=.5]
\draw[thick] (0,0)-- (0,2)--(2,2)--(2,3)--(4,3)--(4,2)--(3,2)--(3,1)--(1,1)--(1,0)--(0,0)--cycle;
\draw[thick] (0,1)--(1,1)
		(1,1)--(1,2)
		(2,1)--(2,2)
		(2,2)--(3,2)
		(3,2)--(3,3);
\node[] at (.5, .5) {$-2$};
\node[] at (.5, 1.5) {$-1$};
\node[] at (1.5, 1.5) {$0$};
\node[] at (2.5, 1.5) {$1$};
\node[] at (2.5, 2.5) {$2$};
\node[] at (3.5, 2.5) {$3$};
\node[] at (-2.5, 2)  {$\gamma [-2, 3] $=};
\end{tikzpicture}
\caption{The left diagram shows a border strip $\gamma$ with $p(\gamma)=-3$ and
  $q(\gamma)=5$. The right diagram shows $\gamma[-2,3]$. In both diagrams the
  contents of the cells are shown. We have $\gamma[3,2]=\emptyset$ and
  $\gamma[2,-1]$ is undefined.}
  \label{fig:gamma}
\end{figure}
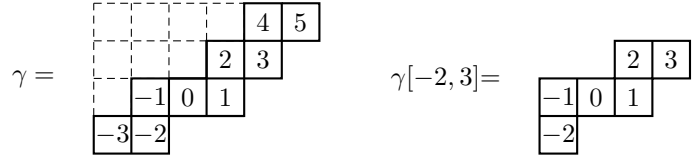

Consider a skew shape $\alpha$ and a border strip $\gamma$ such that
$\Cont(\alpha)\subseteq\Cont(\gamma)$. Consider the diagonal shifts
$\gamma+(i,i)$ of $\gamma$, for $i\in\ZZ$, that cover $\alpha$. The intersection
of each diagonal shift of $\gamma$ with $\alpha$ is a union of border strips.
Let $\theta$ be the collection of the border strips obtained in this way. Then
$\theta$ is a decomposition of $\alpha$. In this case we say that $\gamma$ is
the \emph{cutting strip} of $\theta$. See Figure~\ref{fig:cutting} for an
example.

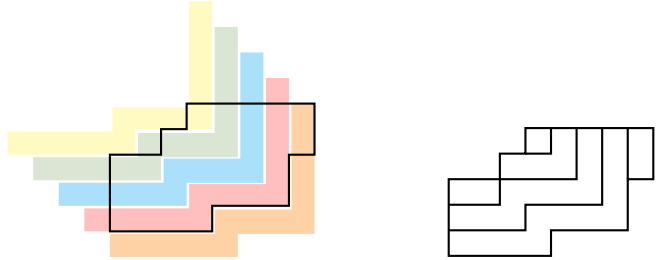
\begin{figure}
  \centering
\begin{tikzpicture}[scale=.34]
\fill[fill=yellow!30!white] (-4,3) rectangle (1,3.9)
				   (.1, 3.9) rectangle (1, 4)
				     (.1,4) rectangle (4,4.85)
				     (3.1,4.85) rectangle (4,9);
\fill[fill=asparagus!30!white] (-3,2) rectangle (2,2.9)
				     (1.1,2.9) rectangle (5,3.85)
				     (4.1,3.85) rectangle (5,8);
\fill[fill=cyan!30!white] (-2,1) rectangle (3,1.9)
				     (2.1,1.9) rectangle (6,2.85)
				     (5.1,2.85) rectangle (6,7);
\fill[fill=pink] (-1,0) rectangle (4,.9)
				     (3.1, .9) rectangle (4, 1)
				     (3.1,1) rectangle (7,1.85)
				     (6.1,1.85) rectangle (7,6);
\fill[fill=orange!35!white] (0,-1) rectangle (5,-.1)
				     (4.1,-.1) rectangle (8,.85)
				     (7.1,.85) rectangle (8,5);
\draw[thick] (0,0)--(4,0)--(4,1)--(7,1)--(7,3)--(8,3)--(8,5)--(3,5)--(3,4)--(2,4)--(2,3)--(0,3)--(0,0)--cycle;
\end{tikzpicture}\qquad\qquad\quad
\begin{tikzpicture}[scale=.34]
\draw[thick] (0,0)--(4,0)--(4,1)--(7,1)--(7,3)--(8,3)--(8,5)--(3,5)--(3,4)--(2,4)--(2,3)--(0,3)--(0,0)--cycle;
\draw[thick] (7,3)--(7,5);
\draw[thick] (0,1)--(3,1)--(3,2)--(6,2)--(6,5);
\draw[thick] (0,2)--(2,2)--(2,3)--(5,3)--(5,5);
\draw[thick] (3,4)--(4,4)--(4,5);
\end{tikzpicture}
\caption{The left diagram shows the diagonal shifts of a border strip $\gamma$
  covering a skew shape $\alpha$. The right diagram shows the decomposition
  $\theta$ with respect to the cutting strip $\gamma$.}
  \label{fig:cutting}
\end{figure}

For a partition $\lambda$ with at most $n$ parts, let
\[
C_n(\lambda)=\{\lambda_i-i: 1\le i\le n\}.
\]
See Figure~\ref{fig:C}. Note that for any partition $\lambda\in\Par_n$, we have
$|C_n(\lambda)|=n$ and $\min(C_n(\lambda))\ge -n$. Conversely, one can easily
see that for any set $C$ of integers with $|C|=n$ and $\min(C)\ge -n$, there is
a unique partition $\lambda\in\Par_n$ with $C_n(\lambda)=C$.

\begin{figure}
  \centering
  \begin{tikzpicture}[scale=.5] 
\draw[thick] (0,0)--(0,6)--(6,6)--(6,4)--(4,4)--(4,3)--(2,3)--(2,2)--(0,2);
\draw[color=gray] (0,5)--(6,5)
			(0,4)--(4,4)
			(0,3)--(2,3)
			(1,2)--(1,6)
			(2,3)--(2,6)
			(3,3)--(3,6)
			(4,4)--(4,6)
			(5,4)--(5,6);
\node[] at (5.5, 5.5) {$5$};
\node[] at (4.5, 5.5) {$4$};
\node[] at (3.5, 5.5) {$3$};
\node[] at (2.5, 5.5) {$2$};
\node[] at (1.5, 5.5) {$1$};
\node[] at (.5, 5.5) {$0$};
\node[] at (-.5, 5.5) {$-1$};
\node[] at (5.5, 4.5) {$4$};
\node[] at (4.5, 4.5) {$3$};
\node[] at (3.5, 4.5) {$2$};
\node[] at (2.5, 4.5) {$1$};
\node[] at (1.5, 4.5) {$0$};
\node[] at (.5, 4.5) {$-1$};
\node[] at (-.5, 4.5) {$-2$};
\node[] at (3.5, 3.5) {$1$};
\node[] at (2.5, 3.5) {$0$};
\node[] at (1.5, 3.5) {$-1$};
\node[] at (.5, 3.5) {$-2$};
\node[] at (-.5, 3.5) {$-3$};
\node[] at (1.5, 2.5) {$-2$};
\node[] at (.5, 2.5) {$-3$};
\node[] at (-.5, 2.5) {$-4$};
\node[] at (-.5, 1.5) {$-5$};
\node[] at (-.5, .5) {$-6$};
\draw[thick, dredcolor] (-.5, .5) circle (13pt);
\draw[thick, dredcolor] (-.5, 1.5) circle (13pt);
\draw[thick, dredcolor] (1.5, 2.5) circle (13pt);
\draw[thick, dredcolor] (3.5, 3.5) circle (13pt);
\draw[thick, dredcolor] (5.5, 4.5) circle (13pt);
\draw[thick, dredcolor] (5.5, 5.5) circle (13pt);
\end{tikzpicture}
\caption{The Young diagram of $\lambda=(6,6,4,2)$, where the contents of the
  cells in $\lambda\cup\{(i,0):1\le i\le 6\}$ are shown. The circled integers
  are the elements in $C_6(\lambda)$.}
  \label{fig:C}
\end{figure}
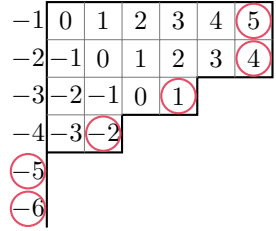

\begin{defn}
  The \emph{outer strip} $\lambda^0$ of a partition $\lambda$ is the set of
  cells $x\in \lambda$ such that $x+(1,1)\not\in \lambda$. The \emph{extended
    outer strip} $\lambda^+$ of $\lambda$ is the (infinite) set of cells $x\in
  \ZZ_{>0}^2\setminus\lambda$ such that $x+(-1,-1)\not\in
  \ZZ_{>0}^2\setminus\lambda$. See Figure~\ref{fig:outer2}.
\end{defn}

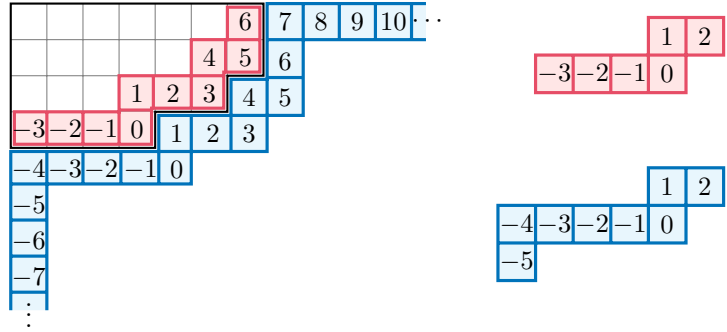
\begin{figure}
  \centering
 \begin{tikzpicture}[scale=.48] 
\draw[color=gray] (0,6)--(4,6)
			(0,7)--(6,7)
			(0,8)--(7,8)
			(1,5)--(1,9)
			(2,5)--(2,9)
			(3,5)--(3,9)
			(4,6)--(4,9)
			(5,6)--(5,9)
			(6,7)--(6,9);
\draw[thick] (0,5)--(4,5)--(4,6)--(6,6)--(6,7)--(7,7)--(7,9)--(0,9)--(0,5)--cycle;
\fill[fill=pink!40!white] (.1, 5.1) rectangle (3.9,6)
				(3, 6.1) rectangle (5.9, 7)
				(5, 7) rectangle (5.95, 8)
				(6, 7.1) rectangle (6.9, 8.9);
\draw[very thick, color=dredcolor] (0.1,5.1)--(3.9,5.1)--(3.9,6.1)--(5.9, 6.1)--(5.9,7.1)--(6.9,7.1)--(6.9,8.9)--(6,8.9)--(6,8)--(5,8)--(5,7)--(3,7)--(3,6)--(.1, 6)--(.1, 5.1)--cycle
						(1,5.1)--(1,6)
						(2,5.1)--(2,6)
						(3,5.1)--(3,6)
						(3,6.05)--(3.9, 6.05)
						(3.95, 6.1)--(3.95, 7)
						(5, 6.1)--(5, 7)--(5.9, 7)
						(5.95, 7.1)--(5.95, 8)--(6.9, 8);
\fill[fill=cyan!8!white] (0, .5) rectangle (1, 4.9)
				(1, 4) rectangle (5, 4.9)
				(4.1, 4.9) rectangle (7.1, 5.9)
				(6.1, 5.9) rectangle (7.1, 6.9)
				(7.1, 5.9) rectangle (8.1,9)
				(8.1, 8) rectangle (11.5, 9);
\draw[very thick, color=frenchblue] (0, .5)--(0, 4.9)--(4.1,4.9)--(4.1, 5.9)--(6.1, 5.9)--(6.1, 6.9)--(7.1, 6.9)--(7.1, 9)--(11.5, 9)
						(1, .5)--(1,4.9)
						(0, 4)--(5,4)--(5, 5.9)
						(4.1, 4.9)--(7.1,4.9)--(7.1, 6.9)
						(6.1, 5.9)--(8.1, 5.9)--(8.1, 9)
						(7.1, 8)--(11.5, 8)
						(0,1)--(1,1)
						(0,2)--(1,2)
						(0,3)--(1,3)
						(2,4)--(2,4.9)
						(3.05,4)--(3.05,4.9)
						(4.1, 4)--(4.1, 4.9)
						(6.1, 4.9)--(6.1, 5.9)
						(7.1, 6.95)--(8.1,6.95)
						(9.1, 8)--(9.1,9)
						(10.1, 8)--(10.1,9)
						(11.1, 8)--(11.1,9);
\node [] at (.5, 5.5) {$-3$};
\node [] at (1.5, 5.5) {$-2$};
\node [] at (2.5, 5.5) {$-1$};
\node [] at (3.5, 5.5) {$0$};
\node [] at (3.5, 6.5) {$1$};
\node [] at (4.5, 6.5) {$2$};
\node [] at (5.5, 6.5) {$3$};
\node [] at (5.5, 7.5) {$4$};
\node [] at (6.5, 7.5) {$5$};
\node [] at (6.5, 8.5) {$6$};
\node [] at (.5, .6) {$\vdots$};
\node [] at (.5, 1.4) {$-7$};
\node [] at (.5, 2.4) {$-6$};
\node [] at (.5, 3.4) {$-5$};
\node [] at (.5, 4.4) {$-4$};
\node [] at (1.5, 4.4) {$-3$};
\node [] at (2.5, 4.4) {$-2$};
\node [] at (3.6, 4.4) {$-1$};
\node [] at (4.6, 4.4) {$0$};
\node [] at (4.6, 5.4) {$1$};
\node [] at (5.6, 5.4) {$2$};
\node [] at (6.6, 5.4) {$3$};
\node [] at (6.6, 6.4) {$4$};
\node [] at (7.6, 6.4) {$5$};
\node [] at (7.6, 7.4) {$6$};
\node [] at (7.6, 8.5) {$7$};
\node [] at (8.6, 8.5) {$8$};
\node [] at (9.6, 8.5) {$9$};
 \node [] at (10.6, 8.5) {$10$};
\node [] at (11.6, 8.5) {$\cdots$};
\end{tikzpicture}\quad
 \begin{tikzpicture}[scale=.5]  
 \fill[fill=cyan!8!white] (0,0) rectangle (1,2)
 				(1,1) rectangle (5,2)
				(4,2) rectangle (6,3);
 \draw[very thick, color=frenchblue] (0,0)--(1,0)--(1,1)--(5,1)--(5,2)--(6,2)--(6,3)--(4,3)--(4,2)--(0,2)--(0,0)--cycle
 						(0,1)--(1,1)--(1,2)
						(2,1)--(2,2)
						(3,1)--(3,2)
						(4,1)--(4,2)--(5,2)--(5,3);
\node [] at (.5, .5) {$-5$};
\node [] at (.5, 1.5) {$-4$};
\node [] at (1.5, 1.5) {$-3$};
\node [] at (2.5, 1.5) {$-2$};
\node [] at (3.5, 1.5) {$-1$};
\node [] at (4.5, 1.5) {$0$};
\node [] at (4.5, 2.5) {$1$};
\node [] at (5.5, 2.5) {$2$};
\fill[fill=pink!40!white] (1,5) rectangle (5,6)
				(4,6) rectangle (6,7);
\draw[very thick, color=dredcolor] (1,5)--(5,5)--(5,6)--(6,6)--(6,7)--(4,7)--(4,6)--(1,6)--(1,5)--cycle
						(2,5)--(2,6)
						(3,5)--(3,6)
						(4,5)--(4,6)--(5,6)--(5,7);
\node [] at (1.5, 5.5) {$-3$};
\node [] at (2.5, 5.5) {$-2$};
\node [] at (3.5, 5.5) {$-1$};
\node [] at (4.5, 5.5) {$0$};
\node [] at (4.5, 6.5) {$1$};
\node [] at (5.5, 6.5) {$2$};
\node[] at (0,-1) {\phantom{1}};
 \end{tikzpicture}  
 \caption{The left diagram shows the partition $\lambda=(7,7,6,4)$, where its
   outer strip $\lambda^0$ and extended outer strip $\lambda^+$ are colored red
   and blue, respectively. On the right are shown $\lambda^0[-5,2]$ at the top
   and $\lambda^+[-5,2]$ at the bottom.}
  \label{fig:outer2}
\end{figure}

Note that by definition, for $p\le q$, we always have
$\lambda^0[p,q]\subseteq\lambda$, but $\lambda^+[p,q]\not\subseteq\lambda$.

\begin{defn}
  Consider a partition $\lambda\in\Par_n$ and integers $a,b\ge -n$ such that
  $a\in C_n(\lambda)$ and $b\not\in C_n(\lambda)\setminus\{a\}$. We define
  $\lambda(a,b)$ to be the unique partition $\mu\in\Par_n$ satisfying
\begin{equation}
  \label{eq:C_n}
C_n(\mu) =(C_n(\lambda) \setminus\{a\}) \cup \{b\}.  
\end{equation}
\end{defn}

\begin{figure}
  \centering
  \begin{tikzpicture}[scale=.48] 
\fill[fill=pink!50!white] (3, 3) rectangle (4, 5);
\fill[fill=pink!50!white] (3, 4) rectangle (6, 5);
\draw[color=gray] (0,5)--(6,5)
			(0,4)--(3.9,3.9)
			(0,3)--(2,3)
			(1,2)--(1,6)
			(2,3)--(2,6)
			(3,3)--(3,6)
			(4,4.1)--(4,6)
			(5,4.1)--(5,6);
\node[] at (5.5, 5.5) {$5$};
\node[] at (4.5, 5.5) {$4$};
\node[] at (3.5, 5.5) {$3$};
\node[] at (2.5, 5.5) {$2$};
\node[] at (1.5, 5.5) {$1$};
\node[] at (.5, 5.5) {$0$};
\node[] at (-.6, 5.5) {$-1$};
\node[] at (5.5, 4.5) {$4$};
\node[] at (4.5, 4.5) {$3$};
\node[] at (3.5, 4.5) {$2$};
\node[] at (2.5, 4.5) {$1$};
\node[] at (1.5, 4.5) {$0$};
\node[] at (.5, 4.5) {$-1$};
\node[] at (-.6, 4.5) {$-2$};
\node[] at (3.5, 3.5) {$1$};
\node[] at (2.5, 3.5) {$0$};
\node[] at (1.5, 3.5) {$-1$};
\node[] at (.5, 3.5) {$-2$};
\node[] at (-.6, 3.5) {$-3$};
\node[] at (1.5, 2.5) {$-2$};
\node[] at (.5, 2.5) {$-3$};
\node[] at (-.6, 2.5) {$-4$};
\node[] at (-.6, 1.5) {$-5$};
\node[] at (-.6, .5) {$-6$};
\draw[thick, color=dredcolor!60!white] (3,3)--(4,3)--(4, 4)--(6, 4)--(6, 5)--(3, 5)--(3,3)--cycle;
\draw[thick] (0,0)--(0,6)--(6,6)--(6,5)--(3,5)--(3,3)--(2,3)--(2,2)--(0,2);
\draw[thick, dredcolor] (-.6, .5) circle (13pt);
\draw[thick, dredcolor] (-.6, 1.5) circle (13pt);
\draw[thick, dredcolor] (1.5, 2.5) circle (13pt);
\draw[thick, dredcolor] (2.5, 3.5) circle (13pt);
\draw[thick, dredcolor] (2.5, 4.5) circle (13pt);
\draw[thick, dredcolor] (5.5, 5.5) circle (13pt);
\node[] at (5.6, 3.5) {\small\color{red}removed};
\end{tikzpicture}\qquad\qquad\quad
\begin{tikzpicture}[scale=.48] 
\fill[fill=cyan!14!white] (0,1) rectangle (3,2);
\fill[fill=cyan!14!white] (2,2) rectangle (5,3);
\fill[fill=cyan!14!white] (4,3) rectangle (5,4);
\draw[thick, cyan] (0,1)--(3,1)--(3,2)--(5,2)--(5,4)--(4,4)--(4,3)--(2,3)--(2,2)--(0,2)--(0,1)--cycle;
\draw[thick] (0,0)--(0,6)--(6,6)--(6,4)--(5,4)--(5,2)--(3,2)--(3,1)--(0,1);
\draw[color=gray] (0,5)--(6,5)
			(0,4)--(5,4)
			(0,3)--(5,3)
			(0,2)--(3,2)
			(1,1)--(1,6)
			(2,1)--(2,6)
			(3,2)--(3,6)
			(4,2)--(4,6)
			(5,4)--(5,6);
\node[] at (5.5, 5.5) {$5$};
\node[] at (4.5, 5.5) {$4$};
\node[] at (3.5, 5.5) {$3$};
\node[] at (2.5, 5.5) {$2$};
\node[] at (1.5, 5.5) {$1$};
\node[] at (.5, 5.5) {$0$};
\node[] at (-.6, 5.5) {$ -1$};
\node[] at (5.5, 4.5) {$4$};
\node[] at (4.5, 4.5) {$3$};
\node[] at (3.5, 4.5) {$2$};
\node[] at (2.5, 4.5) {$1$};
\node[] at (1.5, 4.5) {$0$};
\node[] at (.5, 4.5) {$-1$};
\node[] at (-.6, 4.5) {$-2$};
\node[] at (4.5, 3.5) {$2$};
\node[] at (3.5, 3.5) {$1$};
\node[] at (2.5, 3.5) {$0$};
\node[] at (1.5, 3.5) {$-1$};
\node[] at (.5, 3.5) {$-2$};
\node[] at (-.6, 3.5) {$-3$};
\node[] at (1.5, 2.5) {$-2$};
\node[] at (.5, 2.5) {$-3$};
\node[] at (-.6, 2.5) {$-4$};
\node[] at (-.6, 1.5) {$-5$};
\node[] at (-.6, .5) {$-6$};
\node[] at (2.5, 1.5) {$-2$};
\node[] at (1.5, 1.5) {$-3$};
\node[] at (.5, 1.5) {$-4$};
\node[] at (4.5, 2.5) {$1$};
\node[] at (3.5, 2.5) {$0$};
\node[] at (2.5, 2.5) {$-1$};
\draw[thick, dredcolor] (-.6, .5) circle (13pt);
\draw[thick, dredcolor] (2.5, 1.5) circle (13pt);
\draw[thick, dredcolor] (4.5, 2.5) circle (13pt);
\draw[thick, dredcolor] (4.5, 3.5) circle (13pt);
\draw[thick, dredcolor] (5.5, 4.5) circle (13pt);
\draw[thick, dredcolor] (5.5, 5.5) circle (13pt);
\node[] at (4.3, 1.5) {\small\color{blue}added};
\end{tikzpicture}
\caption{For the partition $\lambda=(6,6,4,2)$ in Figure~\ref{fig:C}, the left
  is $\lambda(4,0)=\lambda\setminus \lambda^0[1,4]$ and the right is
  $\lambda(-5,2)=\lambda\cup \lambda^+ [-4,2]$.}
  \label{fig:la(a,b)}
\end{figure}
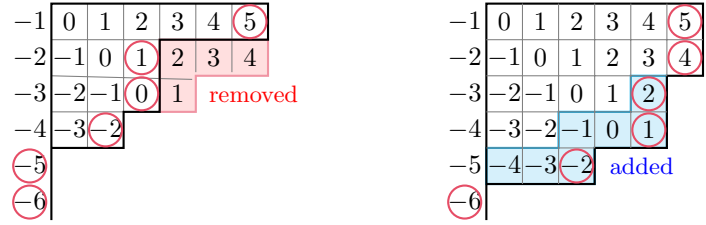

See Figure~\ref{fig:la(a,b)}. It is easy to check that
under the above conditions on $a$ and $b$,
the partition $\lambda(a,b)$ is well defined and
\begin{equation}\label{eq:la(a,b)}
\lambda(a,b) =
\begin{cases}
\lambda & \mbox{if $a=b$},\\
\lambda\setminus\lambda^0[b+1,a] & \mbox{if $a>b$}, \\
\lambda\cup\lambda^+[a+1,b] & \mbox{if $a<b$}.
\end{cases}
\end{equation}

\begin{remark}
  The notations $\lambda^0[p,q]$ and $\lambda(p,q)$ will be used frequently
  throughout this paper. Note that $\lambda^0[p,q]$ is a border strip and
  $\lambda(p,q)$ is a partition obtained from $\lambda$ by adding or deleting a
  (possibly empty) border strip. The notation $\lambda/\mu(a,b)$ always means
  $\lambda/(\mu(a,b))$ and not $(\lambda/\mu)(a,b)$.
\end{remark}

Finally, we define Macdonald's 9th variation of Schur functions.

\begin{defn}
Let $h_{r,s}$, for $r, s\in\ZZ$ with $r\ge1$, be independent indeterminates.
Define $h_{0,s}=1$ and $h_{r,s}=0$ for all $r<0$ and $s\in \ZZ$. For any
partitions $\lambda$ and $\mu$ with at most $n$ parts, the \emph{Macdonald's
  9th variation} $\ts_{\lm}$ of Schur functions is defined by
\[
\ts_{\lm} = \det\left(h_{\lambda_i-\mu_j-i+j,\mu_j-j+1}\right)_{i,j=1}^n.
\]
\end{defn}

If $h_{r,s}$ is set to be equal to the complete homogeneous symmetric function
$h_r$ for all $r$ and $s$, then $\ts_{\lm}$ reduces to the Schur function
$s_{\lm}$. Note that the Schur functions have the property that
$s_{\alpha}=s_{\alpha+(r,s)}$ for any skew shape $\alpha$ and integers $r,s$
such that $\alpha+(r,s)$ is a skew shape. However,
$\ts_{\alpha}\ne\ts_{\alpha+(r,s)}$ unless $r=s$.

\section{The Bazin identity}
\label{sec:bazin-sylv-ident}

In this section we recall the Bazin identity and derive 
a determinant identity for Macdonald's 9th variation of Schur functions from it.

\begin{defn}
Let $M=(M_{ij})_{i\in \ZZ, 1\le j\le n}$ be a matrix whose rows and columns are indexed by $\ZZ$ and $\{1,2,\dots,n\}$ respectively. 
For any sequence $\vec a=(a_1,\dots,a_n)$ of $n$ integers (not necessarily distinct), we define
\[
M[\vec a] = \det (M_{a_i,j})_{1\le i,j\le n}.
\]
Note that if $\vec a$ has repeated elements, then $[\vec a]=0$, and otherwise $[\vec a]$ is, up to sign, equal to the minor of $M$ obtained by selecting the rows of $M$ indexed by the integers in $\vec a$. 
  
\end{defn}

For sequences $\vec a = (a_1,\dots,a_r)$ and $\vec b = (b_1,\dots,b_s)$ of
integers, let
\begin{align*}
\vec a \sqcup \vec b &= (a_1,\dots,a_r,b_1,\dots,b_s),\\
\vec a \setminus a_i &= (a_1,\dots,a_{i-1},a_{i+1},\dots,a_r).
\end{align*}
An integer $b$ is also considered as the sequence $(b)$ consisting of only one
element. For example, 
\[
b \sqcup \vec a= (b)\sqcup \vec a = (b,a_1,\dots,a_r).
\]

The key lemma in this paper is the following result proved by Bazin \cite{Bazin}
in 1851, see also \cite[Lemma~2.1]{Lascoux1988}.

\begin{lem}[Bazin identity]\label{lem:BS}
Let $\vec a=(a_1,\dots,a_k), \vec b=(b_1,\dots,b_k)$ and $\vec c=(c_1,\dots,c_{n-k})$ be any sequences of integers. Then
\[ [\vec a \sqcup \vec c]^{k-1} [\vec b\sqcup\vec c] = (-1)^{\binom k2}\det
  ([b_j\sqcup (\vec a\setminus a_i)\sqcup \vec c])_{i,j=1}^k.
\]
\end{lem}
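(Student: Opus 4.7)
The plan is to treat the entries $M_{m,j}$ as formal indeterminates so that the asserted equation becomes a polynomial identity in these entries. It then suffices to verify it on the Zariski-open locus where $[\vec a\sqcup\vec c]\ne 0$. Writing $v_m$ for the $m$-th row of $M$, viewed as a vector in an $n$-dimensional space, any minor $[\vec x]$ equals $\det(v_{x_1},\dots,v_{x_n})$. On our open locus the $n$ vectors $v_{a_1},\dots,v_{a_k},v_{c_1},\dots,v_{c_{n-k}}$ form a basis, so each $v_{b_j}$ has a unique expansion
\[
v_{b_j}=\sum_{i=1}^{k}\alpha_{ij}\,v_{a_i}+\sum_{l=1}^{n-k}\beta_{lj}\,v_{c_l}.
\]

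First I would substitute this expansion into the first slot of $[b_j\sqcup(\vec a\setminus a_i)\sqcup\vec c]$ and expand by multilinearity. Any $\beta_{lj}$-term duplicates the row $v_{c_l}$ and hence vanishes, and among the $\alpha$-terms only $l=i$ survives because any other $v_{a_l}$ already appears in the remaining slots. Moving $v_{a_i}$ from the first slot back to its original $i$-th slot costs $i-1$ transpositions, so
\[
[b_j\sqcup(\vec a\setminus a_i)\sqcup\vec c] = (-1)^{i-1}\alpha_{ij}\,[\vec a\sqcup\vec c].
\]
Pulling the factor $(-1)^{i-1}[\vec a\sqcup\vec c]$ out of row $i$ of the right-hand $k\times k$ determinant gives
\[
\det\bigl([b_j\sqcup(\vec a\setminus a_i)\sqcup\vec c]\bigr)_{i,j=1}^{k}
=(-1)^{\binom{k}{2}}\,[\vec a\sqcup\vec c]^{k}\,\det(\alpha_{ij})_{i,j=1}^{k},
\]
using $\prod_{i=1}^{k}(-1)^{i-1}=(-1)^{\binom{k}{2}}$.

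To close the loop, I would expand $[\vec b\sqcup\vec c]=\det(v_{b_1},\dots,v_{b_k},v_{c_1},\dots,v_{c_{n-k}})$ by substituting the expansion of every $v_{b_j}$ in all $k$ slots at once. The same argument kills all $\beta$-contributions, and among the remaining terms only genuine permutations of $\{v_{a_1},\dots,v_{a_k}\}$ survive, yielding $[\vec b\sqcup\vec c]=\det(\alpha_{ij})\,[\vec a\sqcup\vec c]$. Substituting $\det(\alpha_{ij})=[\vec b\sqcup\vec c]/[\vec a\sqcup\vec c]$ into the previous display and multiplying both sides by $(-1)^{\binom{k}{2}}$ yields exactly the Bazin identity.

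The main obstacle I anticipate is purely bookkeeping: tracking the $(-1)^{i-1}$ signs coming from repositioning $v_{a_i}$, and confirming that they multiply to precisely the $(-1)^{\binom{k}{2}}$ prefactor on the right-hand side. The localization to $[\vec a\sqcup\vec c]\ne 0$ is harmless because both sides are polynomials in the indeterminate entries $M_{m,j}$; alternatively one can avoid the reduction altogether by arguing symbolically via multilinearity and alternation in the rows indexed by $\vec a$ and $\vec b$, or by noting that the identity follows from iterating the classical three-term Pl\"ucker relation ($k=2$) by induction on $k$.
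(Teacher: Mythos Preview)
Your argument is correct. Note, however, that the paper does not actually supply its own proof of the Bazin identity: it is quoted as a classical result of Bazin (1851), with a pointer to Lascoux--Pragacz for a modern reference, and only the equivalence with Bazin's original formulation is discussed in the subsequent remark. So there is no in-paper proof to compare against.

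That said, your approach---expanding each $v_{b_j}$ in the basis $\{v_{a_1},\dots,v_{a_k},v_{c_1},\dots,v_{c_{n-k}}\}$, using multilinearity and alternation to get $[b_j\sqcup(\vec a\setminus a_i)\sqcup\vec c]=(-1)^{i-1}\alpha_{ij}\,[\vec a\sqcup\vec c]$, and then recognizing $\det(\alpha_{ij})=[\vec b\sqcup\vec c]/[\vec a\sqcup\vec c]$---is a standard and clean proof of this type of compound-determinant identity. The Zariski-density reduction is routine and valid, and the sign bookkeeping you flagged as the potential sticking point is handled correctly: $\prod_{i=1}^{k}(-1)^{i-1}=(-1)^{\binom{k}{2}}$ is exactly the prefactor in the statement.
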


\begin{remark}
  The original statement of the Bazin identity is 
\[
  [\vec a \sqcup \vec c]^{k-1} [\vec b\sqcup\vec c]
  = \det ([(a_1,\dots,a_{i-1},b_j,a_{i+1},\dots,a_k) \sqcup \vec c])_{i,j=1}^k.
\]
Since
\[
[(a_1,\dots,a_{i-1},b_j,a_{i+1},\dots,a_k) \sqcup \vec c] = 
(-1)^{i-1}[b_j\sqcup (\vec a\setminus a_i)\sqcup \vec c],
\]
the above identity is equivalent to the one in Lemma~\ref{lem:BS}. The Bazin
identity is also attributed to Sylvester (see \cite[p.~563]{Lascoux1988}), Reiss,
and Picquet (see \cite[p.~195]{Doubilet_1974}).

We note that Okada~\cite[Corollary~3.2]{okada17:sylvester} found a more general
determinant identity and derived the Bazin identity from it.
\end{remark}

For a permutation $\pi$ of $\{1,2,\dots,n\}$ we denote by $\inv(\pi)$ the number
of pairs $(i,j)$ of integers $1\le i<j\le n$ satisfying $\pi(i)>\pi(j)$. For two
sequences $\vec x = (x_1,\dots, x_r)$ and $\vec y = (y_1,\dots,y_s)$ of
integers, let $\inv(\vec x, \vec y)$ denote the number of pairs $(i,j)$ of
integers $1\le i\le r$ and $1\le j\le s$ satisfying $x_i>y_j$.

For a statement $p$ we define $\chi(p)=1$ if $p$ is true and $\chi(p)=0$
otherwise.

Now we derive a determinant identity for Macdonald's 9th variation of Schur
functions using the Bazin identity. In later sections we will give
some applications of this identity.

\begin{thm}\label{thm:main}
  Let $\lambda$, $\mu$, and $\nu$ be partitions with at most $n$ parts. Suppose
  that $\vec a=(a_1,\dots,a_k)$ and $\vec b=(b_1,\dots,b_k)$ are the sequences
  defined by 
\begin{align*}
  C_n(\lambda)\setminus C_n(\mu) &= \{a_1>a_2>\dots>a_k\},\\
  C_n(\mu)\setminus C_n(\lambda) &= \{b_1>b_2>\dots>b_k\}.
\end{align*}
Then we have
\begin{equation}
  \label{eq:main1}
\ts_{\lambda/\nu}^{k-1} \ts_{\mu/\nu} =(-1)^{\inv(\vec b,\vec a)+\binom k2}\det \left( (-1)^{\chi(b_j>a_i)} \ts_{\lambda(a_i,b_j)/\nu}\right)_{i,j=1}^k,  
\end{equation}
and
\begin{equation}
  \label{eq:main2}
\ts_{\nu/\lambda}^{k-1} \ts_{\nu/\mu} =(-1)^{\inv(\vec b,\vec a) +\binom k2}\det \left( (-1)^{\chi(b_j>a_i)} \ts_{\nu/\lambda(a_i,b_j)}\right)_{i,j=1}^k.
\end{equation}
\end{thm}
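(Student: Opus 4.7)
My plan is to realize Macdonald's 9th variation $\ts_{\cdot/\nu}$ (and dually $\ts_{\nu/\cdot}$) as minors of a single infinite matrix and apply the Bazin identity (Lemma~\ref{lem:BS}) directly. For \eqref{eq:main1} I would introduce the matrix $M=(M_{r,j})_{r\in\ZZ,\,1\le j\le n}$ with
\[
  M_{r,j}=h_{r-\nu_j+j,\,\nu_j-j+1}.
\]
The definition of $\ts_{\kappa/\nu}$ then gives, for every $\kappa\in\Par_n$, the identity $\ts_{\kappa/\nu}=M[\vec\kappa]$ where $\vec\kappa=(\kappa_1-1,\kappa_2-2,\dots,\kappa_n-n)$ is already the decreasing listing of $C_n(\kappa)$. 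Thus $\ts_{\lambda/\nu}$, $\ts_{\mu/\nu}$, and each $\ts_{\lambda(a_i,b_j)/\nu}$ are, up to a sorting sign, minors of $M$ whose underlying row sets are $C_n(\lambda)$, $C_n(\mu)$, and $(C_n(\lambda)\setminus\{a_i\})\cup\{b_j\}$, respectively.

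Next I would let $\vec c=(c_1>\dots>c_{n-k})$ list $C_n(\lambda)\cap C_n(\mu)$ in decreasing order. Since $\vec a$, $\vec b$, and $\vec c$ are set-theoretically pairwise disjoint, the multiset $\vec a\sqcup\vec c$ is $C_n(\lambda)$, the multiset $\vec b\sqcup\vec c$ is $C_n(\mu)$, and $b_j\sqcup(\vec a\setminus a_i)\sqcup\vec c$ is $C_n(\lambda(a_i,b_j))$. Plugging these sequences into Lemma~\ref{lem:BS} gives a raw identity among six bracketed determinants, and I would convert each bracket $[\vec s]$ into $\pm\ts_{\cdot/\nu}$ by multiplying by the sign of the permutation that sorts $\vec s$ into decreasing order.

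The hard part is the sign bookkeeping. For the left-hand side of Lemma~\ref{lem:BS} the computation is immediate: sorting $\vec a\sqcup\vec c$ or $\vec b\sqcup\vec c$ contributes $(-1)^{\inv(\vec c,\vec a)}$ and $(-1)^{\inv(\vec c,\vec b)}$ respectively, since the only out-of-order pairs are inter-block. For the right-hand side, the sign attached to $[b_j\sqcup(\vec a\setminus a_i)\sqcup\vec c]$ is more intricate, but a direct inversion count shows that it factors as
\[
  (-1)^{\chi(b_j>a_i)}\cdot\alpha(i)\cdot\beta(j)\cdot(-1)^{\inv(\vec c,\vec a)+1},
\]
for explicit univariate signs $\alpha(i),\beta(j)$ obtained from $|\{j':a_i<c_{j'}\}|$ and $|\{i':a_{i'}>b_j\}|+|\{j':c_{j'}>b_j\}|$. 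Univariate row and column factors pull out of the $k\times k$ determinant as a single power, leaving precisely $\det\bigl((-1)^{\chi(b_j>a_i)}\ts_{\lambda(a_i,b_j)/\nu}\bigr)_{i,j=1}^k$. Combining all the accumulated signs and simplifying with the elementary identity $\inv(\vec x,\vec y)+\inv(\vec y,\vec x)=|\vec x|\cdot|\vec y|$ for disjoint sequences (applied to the pairs $(\vec a,\vec c)$, $(\vec b,\vec c)$, $(\vec a,\vec b)$), together with $k^2\equiv k\pmod 2$, collapses the net sign to $(-1)^{\inv(\vec b,\vec a)+\binom k2}$, proving \eqref{eq:main1}.

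For \eqref{eq:main2} I would run the same argument on the dual matrix $N=(N_{i,r})_{1\le i\le n,\,r\in\ZZ}$ defined by $N_{i,r}=h_{\nu_i-i-r,\,r+1}$, so that $\ts_{\nu/\kappa}$ is the submatrix determinant of $N$ whose column set is the decreasing listing of $C_n(\kappa)$. Transposing $N$ and applying Lemma~\ref{lem:BS} with the same $\vec a,\vec b,\vec c$ produces an identity with the same sign structure as above, yielding \eqref{eq:main2}.
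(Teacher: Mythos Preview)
Your proposal is correct and follows essentially the same approach as the paper: the same matrix $M$, the same choice of $\vec c$ as the decreasing listing of $C_n(\lambda)\cap C_n(\mu)$, direct application of the Bazin identity, and the same row/column extraction of sign factors (your $\alpha(i)=(-1)^{\inv(\vec c,a_i)}$ and $\beta(j)=(-1)^{\inv(\vec a,b_j)+\inv(\vec c,b_j)}$ match the paper's computation exactly, with your extra ``$+1$'' accounting for the swap $\chi(a_i>b_j)\leftrightarrow\chi(b_j>a_i)$). For \eqref{eq:main2} the paper likewise passes to a dual matrix $N$; your transposed version is equivalent.
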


\begin{proof}
Let 
\[
M=\left(h_{i-\nu_j+j,\nu_j-j+1}\right)_{i\in\ZZ, 1\le j\le n}
\]
and $\vec c =(c_1,\dots,c_{n-k})$, where
$c_1>\cdots>c_{n-k}$ are the elements of $C_n(\lambda)\setminus \vec a = C_n(\mu)\setminus \vec b$ written in decreasing order. Then Lemma~\ref{lem:BS} says that
\begin{equation}
  \label{eq:2}
[\vec a \sqcup \vec c]^{k-1} [\vec b\sqcup\vec c] = (-1)^{\binom k2}\det ([b_j\sqcup (\vec a\setminus a_i)\sqcup \vec c])_{i,j=1}^k.  
\end{equation}

Since $\vec a \sqcup \vec c$ is a rearrangment of the decreasing sequence
$(\lambda_1-1,\lambda_2-2, \dots,\lambda_n-n)$, we have
\begin{equation}
  \label{eq:5}
[\vec a \sqcup\vec c] = (-1)^{\inv(\vec c,\vec a)} \det(h_{\lambda_i-i - \nu_j+j,\nu_j-j+1})_{i,j=1}^n 
= (-1)^{\inv(\vec c,\vec a)}  \ts_{\lambda/\nu}.
\end{equation}
Similarly, we have
\begin{equation}
  \label{eq:6}
[\vec b\sqcup\vec c] =  (-1)^{\inv(\vec c,\vec b)} \ts_{\mu/\nu}.
\end{equation}

Now, we claim that
\begin{equation}
  \label{eq:3}
[b_j\sqcup (\vec a\setminus a_i)\sqcup \vec c] = 
(-1)^{\inv(\vec c, \vec a) - \inv(\vec c, a_i)+\inv(\vec c, b_j) + \inv(\vec a, b_j)+\chi(a_i>b_j)} \ts_{\lambda(a_i,b_j)/\nu}.
\end{equation}
If $b_j\in \vec a\setminus a_i$, both sides of \eqref{eq:3} are zero. Suppose
$b_j\not\in \vec a\setminus a_i$. Then $b_j\sqcup (\vec a\setminus a_i)\sqcup
\vec c$ is a rearrangment of $(\rho_1-1,\rho_2-2,\dots,\rho_n-n)$, where
$\rho=(\rho_1,\dots,\rho_n)$ is the partition $\lambda(a_i,b_j)$. Thus
\[
[b_j\sqcup (\vec a\setminus a_i)\sqcup \vec c] = (-1)^t \ts_{\lambda(a_i,b_j)/\nu},
\]
where
\begin{align*}
t &= \inv(\vec a\setminus a_i, b_j) +\inv(\vec c, b_j) + \inv(\vec c, \vec a\setminus a_i)\\  
&=\inv(\vec a, b_j) - \inv(a_i, b_j) +\inv(\vec c, b_j) + \inv(\vec c, \vec a) - \inv(\vec c, a_i).
\end{align*}
Since $\inv(a_i, b_j) = \chi(a_i>b_j)$, we obtain  \eqref{eq:3}. 

By factoring out common factors from each row and each column using \eqref{eq:3}, we get
\begin{equation}
  \label{eq:1}
\det ([b_j\sqcup (\vec a\setminus a_i)\sqcup \vec c])_{i,j=1}^k 
= (-1)^{ (k-1)\inv(\vec c,\vec a) + \inv(\vec c, \vec b)+\inv(\vec a, \vec b) } \det ((-1)^{\chi(a_i>b_j)} \ts_{\lambda(a_i,b_j)/\nu}).
\end{equation}
By substituting \eqref{eq:5}, \eqref{eq:6} and \eqref{eq:1} to \eqref{eq:2}, we obtain 
\begin{equation}
  \label{eq:4}
\ts_{\lambda/\nu}^{k-1} \ts_{\mu/\nu} =(-1)^{\inv(\vec a,\vec b) +\binom k2}\det \left( (-1)^{\chi(a_i>b_j)} \ts_{\lambda(a_i,b_j)/\nu}\right)_{i,j=1}^k.
\end{equation}
Since $a_i\ne b_j$ for all $i,j$, we have $\inv(\vec a,\vec b) = k^2-\inv(\vec b,\vec a)$ and $\chi(a_i>b_j)=1-\chi(b_j>a_i)$. Thus \eqref{eq:4} is equivalent to \eqref{eq:main1}, which completes the proof of the first identity. 

The second identity \eqref{eq:main2} is proved by the same arguments except that
in this case we use the matrix $N=\left(h_{\nu_j-j+i,i+1}\right)_{i\in\ZZ, 1\le
  j\le n}$ in place of $M$.
\end{proof}

\section{Lascoux--Pragacz and Kreiman decompositions}
\label{sec:lasc-prag-kreim}

In this section we restate Theorem~\ref{thm:main} using the Lascoux--Pragacz and
Kreiman decompositions for the case $\mu\subseteq\lambda$. As a corollary we
prove (a corrected version of) a conjecture of Morales, Pak, and Panova
\cite{MPP2}.

Recall that for a border strip $\gamma$, we denote by $p(\gamma)$
(resp.~$q(\gamma)$) the content of the starting (resp.~ending) cell of $\gamma$.

\begin{defn}
  A \emph{(border strip) decomposition} of a skew shape $\alpha$ is a sequence
  $\theta=(\theta_1,\dots,\theta_k)$ of border strips satisfying the following
  conditions.
\begin{itemize}
\item $\theta_i\cap \theta_j =\emptyset$ for all $i\ne j$,
\item $\theta_1\cup \cdots\cup \theta_k = \alpha$.
\end{itemize}
If there is no possible confusion, we will simply write $p_i=p(\theta_i)$,
$q_i=q(\theta_i)$, $\vec p=(p_1,\dots,p_k)$ and $\vec q=(q_1,\dots,q_k)$. By
convention we will always label the border strips so that $q_1\ge q_2\ge \dots \ge q_k$.
\end{defn}

Recall that the outer strip of a partition $\lambda$ is the set of cells
$x\in\lambda$ satisfying $x+(-1,-1)\not\in\lambda$. For a skew shape $\lm$, the
\emph{outer strip} of $\lm$ is defined to be the outer strip of $\lambda$. Let
$\gamma$ be the outer strip of $\lm$ and let $\rho$ be the set of cells $x\in
\lm$ with $x+(-1,-1)\not\in\lm$. We define the \emph{inner strip} of $\lm$ to be
the set
\[
\rho \cup \{x\in \gamma: c(x)\notin \Cont(\rho) \}.
\]
See Figure~\ref{fig:inner}.

\begin{figure}
  \centering
\begin{tikzpicture}[scale=.46]
\fill[fill=cyan!27!white, rounded corners] (.2, .2) rectangle (1.8, .8)
				(1.2, .2) rectangle (1.8, 1.8)
				(1.2, 1.2) rectangle (2.8, 1.8)
				(2.2, 1.2) rectangle (2.8, 2.8)
				(2.2, 2.2) rectangle (3.8, 2.8)
				(3.2, 2.2) rectangle (3.8, 4.8)
				(3.2, 4.2) rectangle (6.8, 4.8)
				(6.2, 4.2) rectangle (6.8, 6.8)
				(6.2, 6.2) rectangle (9.8, 6.8)
				(9.2, 6.2) rectangle (9.8, 7.8)
				(9.2, 7.2) rectangle (10.8, 7.8)
				(10.2, 7.2) rectangle (10.8, 8.8);
\draw[dashed, color=gray] (0,3)--(0,9)--(11,9)--(11,8)
		(4,4)--(6,4)
		(7,5)--(7,6)--(8,6)
		(0,8)--(8,8)
		(0,7)--(8,7)
		(0,6)--(7,6)
		(0,5)--(6,5)
		(0,4)--(1,4)
		(1,4)--(1,9)
		(2,4)--(2,9)
		(3,4)--(3,9)
		(4,4)--(4,9)
		(5,4)--(5,9)
		(6,5)--(6,9)
		(7,6)--(7,9)
		(8,8)--(8,9)
		(9,8)--(9,9)
		(10, 8)--(10,9);
\draw[thick] (0,0)--(2,0)--(2,1)--(3,1)--(3,2)--(4,2)--(4,4)--(1,4)--(1,3)--(0,3)--(0,0)--cycle
		(6,4)--(7,4)--(7,5)--(6,5)--(6,4)--cycle
		(8,6)--(10,6)--(10,7)--(11,7)--(11,8)--(8,8)--(8,6)--cycle
		(0,1)--(2,1)
		(0,2)--(3,2)
		(1,3)--(4,3)
		(1,0)--(1,3)
		(2,1)--(2,4)
		(3,2)--(3,4)
		(9,6)--(9,8)
		(10,7)--(10,8)
		(8,7)--(10,7);
\end{tikzpicture}\qquad\quad
\begin{tikzpicture}[scale=.46]
\fill[fill=asparagus!35!white, rounded corners] (.2, .2) rectangle (.8, 2.8)
			(.2, 2.2) rectangle (1.8, 2.8)
			(1.2, 2.2) rectangle (1.8, 3.8)
			(1.2, 3.2) rectangle (3.8, 3.8)
			(3.2, 3.2) rectangle (3.8, 4.8)
			(3.2, 4.2) rectangle (6.8, 4.8)
			(6.2, 4.2) rectangle (6.8, 6.8)
			(6.2, 6.2) rectangle (8.8, 6.8)
			(8.2, 6.2) rectangle (8.8, 7.8)
			(8.2, 7.2) rectangle (10.8, 7.8)
			(10.2, 7.2) rectangle (10.8, 8.8);
\draw[dashed, color=gray] (0,3)--(0,9)--(11,9)--(11,8)
		(4,4)--(6,4)
		(7,5)--(7,6)--(8,6)
		(0,8)--(8,8)
		(0,7)--(8,7)
		(0,6)--(7,6)
		(0,5)--(6,5)
		(0,4)--(1,4)
		(1,4)--(1,9)
		(2,4)--(2,9)
		(3,4)--(3,9)
		(4,4)--(4,9)
		(5,4)--(5,9)
		(6,5)--(6,9)
		(7,6)--(7,9)
		(8,8)--(8,9)
		(9,8)--(9,9)
		(10, 8)--(10,9);
\draw[thick] (0,0)--(2,0)--(2,1)--(3,1)--(3,2)--(4,2)--(4,4)--(1,4)--(1,3)--(0,3)--(0,0)--cycle
		(6,4)--(7,4)--(7,5)--(6,5)--(6,4)--cycle
		(8,6)--(10,6)--(10,7)--(11,7)--(11,8)--(8,8)--(8,6)--cycle
		(0,1)--(2,1)
		(0,2)--(3,2)
		(1,3)--(4,3)
		(1,0)--(1,3)
		(2,1)--(2,4)
		(3,2)--(3,4)
		(9,6)--(9,8)
		(10,7)--(10,8)
		(8,7)--(10,7);
\end{tikzpicture}
  \caption{The outer strip of $\lm$ on the left and the inner strip of $\lm$ on
    the right.}
  \label{fig:inner}
\end{figure}
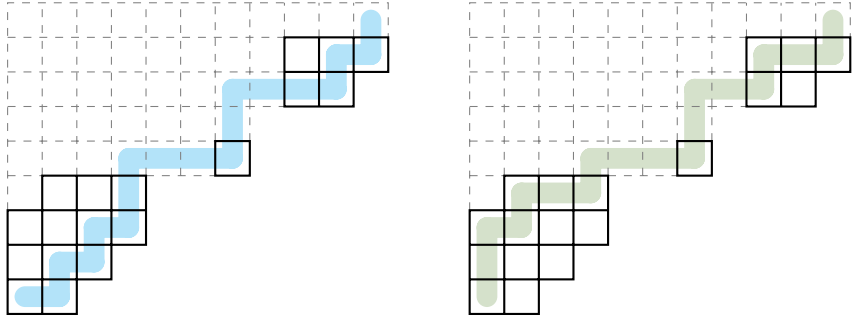

\begin{defn}
  Let $\mu$ and $\lambda$ be partitions with $\mu\subseteq\lambda$. The
  \emph{Lascoux--Pragacz decomposition} (resp.~\emph{Kreiman decomposition}) of
  $\lm$ is the decomposition of $\lm$ obtained by using the outer (resp.~inner)
  strip of $\lambda$ as the cutting strip. See Figure~\ref{fig:LPK}.
\end{defn}

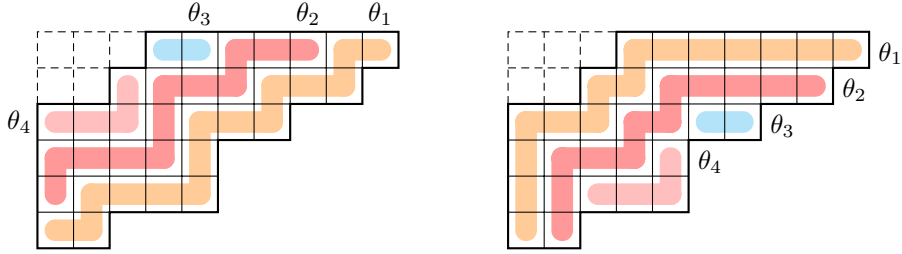
\begin{figure}
  \centering
\begin{tikzpicture}[scale=.48]
\fill[fill=orange!40!white, rounded corners] (.2, .2) rectangle (1.8, .8);
\fill[fill=orange!40!white, rounded corners] (1.2, .2) rectangle (1.8, 1.8);
\fill[fill=orange!40!white, rounded corners] (1.2, 1.2) rectangle (4.8, 1.8);
\fill[fill=orange!40!white, rounded corners] (4.2, 1.2) rectangle (4.8, 3.8);
\fill[fill=orange!40!white, rounded corners] (4.2, 3.2) rectangle (6.8, 3.8);
\fill[fill=orange!40!white, rounded corners] (6.2, 3.2) rectangle (6.8, 4.8);
\fill[fill=orange!40!white, rounded corners] (6.2, 4.2) rectangle (8.8, 4.8);
\fill[fill=orange!40!white, rounded corners] (8.2, 4.2) rectangle (8.8, 5.8);
\fill[fill=orange!40!white, rounded corners] (8.2, 5.2) rectangle (9.8, 5.8);
\fill[fill=red!40!white, rounded corners] (.2, 1.2) rectangle (.8, 2.8);
\fill[fill=red!40!white, rounded corners] (.2, 2.2) rectangle (3.8, 2.8);
\fill[fill=red!40!white, rounded corners] (3.2,2.2) rectangle (3.8, 4.8);
\fill[fill=red!40!white, rounded corners] (3.2, 4.2) rectangle (5.8, 4.8);
\fill[fill=red!40!white, rounded corners] (5.2, 4.2) rectangle (5.8, 5.8);
\fill[fill=red!40!white, rounded corners] (5.2, 5.2) rectangle (7.8, 5.8);
\fill[fill=cyan!27!white, rounded corners] (3.2, 5.2) rectangle (4.8, 5.8);
\fill[fill=pink, rounded corners] (.2, 3.2) rectangle (2.8, 3.8);
\fill[fill=pink, rounded corners] (2.2, 3.2) rectangle (2.8, 4.8);
\draw[thick] (0,0)--(2,0)--(2,1)--(5,1)--(5,3)--(7,3)--(7,4)--(9,4)--(9,5)--(10,5)--(10,6)--(3,6)--(3,5)--(2,5)--(2,4)--(0,4)--(0,0)--cycle;
\draw[densely dashed] (0,4)--(0,6)--(3,6);
\draw[densely dashed] (0,5)--(2,5);
\draw[densely dashed] (1,4)--(1,6);
\draw[densely dashed] (2,5)--(2,6);
\draw (3,5)--(9,5)
	(2,4)--(7,4)
	(0,3)--(5,3)
	(0,2)--(5,2)
	(0,1)--(2,1)
	(1,0)--(1,4)
	(2,1)--(2,4)
	(3,1)--(3,5)
	(4,1)--(4,6)
	(5,3)--(5,6)
	(6,3)--(6,6)
	(7,4)--(7,6)
	(8,4)--(8,6)
	(9,5)--(9,6);
\node[] at (9.5, 6.5) {$\theta_1$};
\node[] at (7.5, 6.5) {$\theta_2$};
\node[] at (4.5, 6.5) {$\theta_3$};
\node[] at (-.5, 3.5) {$\theta_4$};
\node[] at (2, -1) {\phantom{d}};
\end{tikzpicture}\qquad\qquad
\begin{tikzpicture}[scale=.48]
\fill[fill=orange!40!white, rounded corners] (.2, .2) rectangle (.8, 3.8)
							(.2, 3.2) rectangle (2.8, 3.8)
							(2.2, 3.2) rectangle (2.8, 4.8)
							(2.2, 4.2) rectangle (3.8, 4.8)
							(3.2, 4.2) rectangle (3.8, 5.8)
							(3.2, 5.2) rectangle (9.8, 5.8);
\fill[fill=red!40!white, rounded corners] (1.2, .2) rectangle (1.8, 2.8)
							(1.2, 2.2) rectangle (3.8, 2.8)
							(3.2, 2.2) rectangle (3.8, 3.8)
							(3.2, 3.2) rectangle (4.8, 3.8)
							(4.2, 3.2) rectangle (4.8, 4.8)
							(4.2, 4.2) rectangle (8.8, 4.8);
\fill[fill=pink, rounded corners] (2.2, 1.2) rectangle (4.8, 1.8)
						(4.2, 1.2) rectangle (4.8, 2.8);
\fill[fill=cyan!27!white, rounded corners] (5.2, 3.2) rectangle (6.8, 3.8);
\draw[thick] (0,0)--(2,0)--(2,1)--(5,1)--(5,3)--(7,3)--(7,4)--(9,4)--(9,5)--(10,5)--(10,6)--(3,6)--(3,5)--(2,5)--(2,4)--(0,4)--(0,0)--cycle;
\draw[densely dashed] (0,4)--(0,6)--(3,6);
\draw[densely dashed] (0,5)--(2,5);
\draw[densely dashed] (1,4)--(1,6);
\draw[densely dashed] (2,5)--(2,6);
\draw (3,5)--(9,5)
	(2,4)--(7,4)
	(0,3)--(5,3)
	(0,2)--(5,2)
	(0,1)--(2,1)
	(1,0)--(1,4)
	(2,1)--(2,4)
	(3,1)--(3,5)
	(4,1)--(4,6)
	(5,3)--(5,6)
	(6,3)--(6,6)
	(7,4)--(7,6)
	(8,4)--(8,6)
	(9,5)--(9,6);
\node[] at (10.6, 5.4) {$\theta_1$};
\node[] at (9.6, 4.4) {$\theta_2$};
\node[] at (7.6, 3.4) {$\theta_3$};
\node[] at (5.6, 2.4) {$\theta_4$};
\node[] at (2, -1) {\phantom{d}};
\end{tikzpicture}
\caption{The Lascoux--Pragacz decompositions of a connected skew shape
  (left) and the Kreiman decomposition of the same skew shape (right)}
  \label{fig:LPK}
\end{figure}

\begin{lem}\label{lem:C/C}
  Let $\lambda,\mu\in \Par_n$ with $\mu\subseteq\lambda$. Suppose that
  $\theta=(\theta_1,\dots,\theta_k)$ is the Lascoux--Pragacz or Kreiman
  decomposition of $\lm$. Then $p_i\ne p_j$, $q_i\ne q_j$, $p_i\ne q_j$, and
  $p_i-1\ne q_j$ for all $1\le i\ne j\le k$, and
  \begin{align*}
C_n(\lambda)\setminus C_n(\mu) &= \{q_1>q_2>\dots>q_k\},\\
C_n(\mu)\setminus C_n(\lambda) &= \{p_1-1,p_2-1,\dots,p_k-1\}.
  \end{align*}
\end{lem}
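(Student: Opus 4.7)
The plan is to interpret $C_n(\rho)$ as bead positions on an abacus and view each border-strip addition as a single bead movement. For a partition $\rho$ with $\ell(\rho) \le n$ and a border strip $\gamma$ with content range $[p,q]$ such that $\rho \cup \gamma$ is again a partition, a direct comparison of the row lengths of $\rho$ and $\rho \cup \gamma$ shows that $C_n(\rho \cup \gamma)$ is obtained from $C_n(\rho)$ by removing the position $p - 1$ and inserting $q$: the modifications in intermediate rows of $\gamma$ cancel in pairs because the leftmost column of $\gamma$ in row $i$ equals the rightmost column of $\gamma$ in row $i+1$, so each intermediate old position exactly matches the corresponding new position.

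I would apply this to the decomposition $\theta = (\theta_1, \dots, \theta_k)$: attaching the strips to $\mu$ one at a time in any order producing intermediate partitions yields $\lambda$, and each $\theta_j$ contributes a bead movement from $p_j - 1$ to $q_j$. Provided that no bead's destination equals another bead's origin --- the \emph{non-chaining} condition $\{p_j - 1\}_{j=1}^k \cap \{q_j\}_{j=1}^k = \emptyset$ --- the multisets $\{p_j - 1\}$ and $\{q_j\}$ each have $k$ distinct elements and are equal to $C_n(\mu) \setminus C_n(\lambda)$ and $C_n(\lambda) \setminus C_n(\mu)$ respectively. From this, $p_i \ne p_j$ and $q_i \ne q_j$ for $i \ne j$ follow at once, as does $p_i - 1 \ne q_j$ for all $i, j$ (by disjointness of the two symmetric-difference sets).

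The main obstacle is the non-chaining property together with the remaining inequality $p_i \ne q_j$ for $i \ne j$; both depend on the specific structure of the LP and Kreiman cutting strips. I would argue non-chaining by contradiction: if $q_i = p_j - 1$, then the ending cell of $\theta_i$ and the starting cell of $\theta_j$ have consecutive contents $c$ and $c + 1$, both lying in $\lambda/\mu$. Since two cells of consecutive contents lying in the same diagonal shift of the cutting strip are adjacent along the shifted cutting-strip path, hence in the same connected component and therefore in the same strip, the two cells must lie in different shifts. A case analysis on whether the cutting strip (the outer strip $\lambda^0$ for LP, the inner strip for Kreiman) takes an east or a north step between its cells of contents $c$ and $c + 1$, combined with the concrete coordinates $(r,s) = u_c + (t_i, t_i)$ and $(r',s') = u_{c+1} + (t_j, t_j)$ of the shifted cells, then forces a cell adjacent to the starting or ending cell to simultaneously lie in $\mu$ and outside $\mu$, yielding the required contradiction. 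The inequality $p_i \ne q_j$ with $i \ne j$ is then obtained by an analogous shift-plus-neighbor analysis applied to the two distinct cells of common content $p_i = q_j$ in $\theta_i$ and $\theta_j$.
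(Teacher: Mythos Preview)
Your abacus reduction is correct and is a genuinely different route from the paper's.  The paper argues by induction on $k$: for the Lascoux--Pragacz case it notes that $\theta_1$ is a connected component of $(\lambda/\mu)\cap\lambda^0$, so the contents $p_1-1$ and $q_1+1$ do not occur anywhere in $\lambda/\mu$ and the contents $p_1,q_1$ occur only in $\theta_1$.  This forces $\Cont(\theta_i)\subseteq[p_1+1,q_1-1]$ or $\Cont(\theta_i)\cap[p_1-1,q_1+1]=\emptyset$ for every $i\ge2$, giving all four inequalities relative to $\theta_1$ at once; peeling off $\theta_1$ and applying the hypothesis to $(\lambda\setminus\theta_1)/\mu$ finishes.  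Your approach instead isolates the $C_n$ statement as a composition of single bead moves and reduces everything to the non-chaining condition, which is a clean and reusable viewpoint.

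Two places in your sketch need tightening.  First, say explicitly why an ordering with partition intermediates exists (for LP remove $\theta_1,\theta_2,\dots$ from $\lambda$; for Kreiman add them to $\mu$ in the analogous order).  Second, the inequality $p_i\ne q_j$ for $i\ne j$ is not merely ``analogous'' to non-chaining: here the two cells lie on the \emph{same} diagonal, so their relevant neighbours have contents $c-1$ and $c+1$ and are not directly comparable.  The way to close it is to use that along any diagonal the cells of $\lambda/\mu$ occupy a contiguous block of shifts; hence if the content-$(c+1)$ neighbour on shift $t_j$ is outside $\lambda/\mu$ and $t_i$ lies further from the cutting strip than $t_j$, the content-$(c+1)$ neighbour on shift $t_i$ is also outside $\lambda/\mu$, forcing $\theta_i$ to be a single cell and $q_i=c=q_j$ --- contradicting the distinctness of the $q$'s you have already obtained from non-chaining (and symmetrically with the roles of $i,j$ reversed, contradicting distinctness of the $p$'s).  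Your east/north case split handles LP; for the Kreiman strip the same monotonicity argument works with the shift direction reversed, and in fact the case split on the step direction is unnecessary there.
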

\begin{proof}
  We will only consider the case that $\theta$ is the Lascoux--Pragacz
  decomposition of $\lm$ since it can be proved similarly for the case of the
  Kreiman decomposition. We proceed by induction on $k$. If $k=0$,
  then $\lambda=\mu$ and there is nothing to prove. Suppose that $k\ge1$ and the
  lemma is true for $k-1$.

  By the definition of the Lascoux--Pragacz decomposition, the border strip
  $\theta_1$ is a connected component of $(\lm)\cap \lambda^0$. This implies
  that $p_1-1,q_1+1\not\in \Cont(\lm)$ and the starting and ending cells of
  $\theta_1$ are the only cells in $\lm$ whose contents are $p_1$ and $q_1$,
  respectively. Therefore, if $\Cont(\theta_i)\cap\Cont(\theta_1)\ne\emptyset$
  for some $i\ge2$, then $\Cont(\theta_i)\subseteq[p_1+1,q_1-1]$ because
  $\Cont(\theta_i)$ is a set of consecutive integers. This shows that $p_i\ne
  p_1$ $q_i\ne q_1$, $p_i\ne q_1$, $p_i-1\ne q_1$, and $p_1-1\ne q_i$ for all
  $2\le i\le k$.

  Let $\rho=\lambda\setminus\theta_1$. Then $\mu\subseteq\rho$ and
  $(\theta_2,\theta_3,\dots,\theta_k)$ is the Lascoux--Pragacz decomposition of
  $\rho/\mu$. By the induction hypothesis, we have $p_i\ne p_j$, $q_i\ne q_j$,
  $p_i\ne q_j$, and $p_i-1\ne q_j$ for all $2\le i\ne j\le k$, and
  \begin{align*}
    C_n(\rho)\setminus C_n(\mu) &= \{q_2>q_3>\dots>q_k\},\\
    C_n(\mu)\setminus C_n(\rho) &= \{p_2-1,p_3-1,\dots,p_k-1\}.
  \end{align*}

  It is straightforward to check that $p_1-1\in C_n(\mu)$, $q_1\not\in
  C_n(\mu)$, $p_1-1\not\in C_n(\lambda)$, $q_1\in C_n(\lambda)$, and
\[
C_n(\rho) = (C_n(\lambda)\setminus\{q_1\})\cup\{p_1-1\}.
\]
Combining the above results we obtain the lemma for $k$. The induction then
completes the proof.
\end{proof}

By Lemma~\ref{lem:C/C}, if $\mu\subseteq\lambda$, then we can restate
Theorem~\ref{thm:main} using the Lascoux--Pragacz or Kreiman decompositions as
follows.

\begin{thm}[Theorem~\ref{thm:main_LP_intro}]\label{thm:main_LP}
  Let $\lambda$, $\mu$, and $\nu$ be partitions and suppose that $\mu\subseteq
  \lambda$ and $\theta=(\theta_1,\dots,\theta_k)$ is the Lascoux--Pragacz
  decomposition of $\lm$. Then we have
  \begin{align}
  \label{eq:main_LP1}
    \ts_{\lambda/\nu}^{k-1} \ts_{\mu/\nu}
    &= \det \left( (-1)^{\chi(p_j>q_i)} \ts_{\lambda(q_i,p_j-1)/\nu}\right)_{i,j=1}^k,  \\
  \label{eq:main_LP2}
    \ts_{\nu/\lambda}^{k-1} \ts_{\nu/\mu}
    &= \det \left( (-1)^{\chi(p_j>q_i)} \ts_{\nu/\lambda(q_i,p_j-1)}\right)_{i,j=1}^k.
  \end{align}
\end{thm}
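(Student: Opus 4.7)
The approach is to derive both identities directly from Theorem~\ref{thm:main} using the content-data dictionary supplied by Lemma~\ref{lem:C/C}. By Lemma~\ref{lem:C/C}, $C_n(\lambda)\setminus C_n(\mu) = \{q_1>q_2>\dots>q_k\}$ (already decreasing by the convention on the Lascoux--Pragacz decomposition) and $C_n(\mu)\setminus C_n(\lambda) = \{p_1-1,\dots,p_k-1\}$, with all these integers pairwise distinct. Thus Theorem~\ref{thm:main} applies with $\vec a=(q_1,\dots,q_k)$ and $\vec b=(b_1,\dots,b_k)$ the decreasing rearrangement of $(p_1-1,\dots,p_k-1)$. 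Let $\sigma\in S_k$ be the permutation defined by $b_j=p_{\sigma(j)}-1$. Since $p_{\sigma(j)}-1\ne q_i$ by Lemma~\ref{lem:C/C}, we have $\chi(b_j>q_i)=\chi(p_{\sigma(j)}>q_i)$, so the matrix appearing in Theorem~\ref{thm:main} is obtained from the matrix in Theorem~\ref{thm:main_LP} by the column permutation $\sigma$; consequently their determinants differ by $\sgn(\sigma)$.

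Substituting into Theorem~\ref{thm:main}, identity \eqref{eq:main_LP1} is equivalent to the sign equation
\[
(-1)^{\inv(\vec b,\vec a)+\binom{k}{2}}\,\sgn(\sigma)=1.
\]
The main obstacle is this verification, which is not implied by Lemma~\ref{lem:C/C} alone: the specific geometric structure of the Lascoux--Pragacz decomposition is essential. I would proceed by induction on $k$. The base case $k=1$ is immediate since $p_1\le q_1$ forces $\inv(\vec b,\vec a)=0$. For the induction step, the argument from the proof of Lemma~\ref{lem:C/C} shows that $(\theta_2,\dots,\theta_k)$ is the Lascoux--Pragacz decomposition of $(\lambda\setminus\theta_1)/\mu$, so the analogous sign identity holds for the $(k-1)$-strip data. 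A direct calculation, combining $\binom{k}{2}=\binom{k-1}{2}+(k-1)$ with the recursions $\inv(\vec b,\vec a)=s+\inv(\vec b',\vec a')$ and $\sgn(\sigma)=(-1)^r\sgn(\sigma')$ (where $r=\#\{i\ge 2:p_i>p_1\}$, $s=\#\{j\ge 2:p_1>q_j\}$, and primed quantities refer to the $(k-1)$-strip data), reduces the claim to the parity relation $r+s\equiv k-1\pmod 2$. Since the two sets counted by $r$ and $s$ are disjoint, this is equivalent to the structural statement that $p_1\notin[p_i,q_i]$ for every $i\ge 2$---equivalently, that $\theta_1$ is the unique strip whose content interval contains $p_1$. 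This laminarity property of the Lascoux--Pragacz content intervals is the only place where structure beyond Lemma~\ref{lem:C/C} is invoked.

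Identity \eqref{eq:main_LP2} is obtained in exactly the same manner, starting from \eqref{eq:main2} of Theorem~\ref{thm:main} in place of \eqref{eq:main1} and using the matrix $(h_{\nu_j-j+i,i+1})$ from that proof instead of $(h_{i-\nu_j+j,\nu_j-j+1})$. Aside from the sign bookkeeping described above, no new ingredients are required.
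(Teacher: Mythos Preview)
Your proposal is correct and follows essentially the same approach as the paper: both apply Theorem~\ref{thm:main} with $\vec a=\vec q$ and $\vec b$ the decreasing sort of $(p_1-1,\dots,p_k-1)$ via Lemma~\ref{lem:C/C}, and then verify the resulting sign identity using the laminarity of the Lascoux--Pragacz content intervals. The only organizational difference is that the paper first passes to the increasing rearrangement $\vec b'$ (absorbing the $\binom{k}{2}$) and then checks $\inv(\pi)=\inv(\vec p,\vec q)=\inv(\vec b,\vec q)$ in one direct step from the dichotomy ``$p_j<q_j<p_i<q_i$ or $p_i<p_j\le q_j<q_i$'' for $i<j$, whereas you establish the equivalent sign equation by induction on $k$, peeling off $\theta_1$ and invoking only the $\theta_1$-laminarity already contained in the proof of Lemma~\ref{lem:C/C}.
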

\begin{proof}
  Let $\vec a=(a_1,\dots,a_k)$ and $\vec b=(b_1,\dots,b_k)$ be the sequences
  defined by
\begin{align*}
  C_n(\lambda)\setminus C_n(\mu) &= \{a_1>a_2>\dots>a_k\},\\
  C_n(\mu)\setminus C_n(\lambda) &= \{b_1>b_2>\dots>b_k\}.
\end{align*}
By \eqref{eq:main1}, we have
\[
  \ts_{\lambda/\nu}^{k-1} \ts_{\mu/\nu}
  =(-1)^{\inv(\vec b,\vec a)+\binom k2}\det
    \left( (-1)^{\chi(b_j>a_i)} \ts_{\lambda(a_i,b_j)/\nu}\right)_{i,j=1}^k.
\]
By Lemma~\ref{lem:C/C}, we have $\vec a = \vec q$ and
$\{p_1-1,p_2-1,\dots,p_k-1\}=\{b_1,b_2,\dots,b_k\}$. Let $\vec b' =
(b_1',b_2',\dots,b_k') = (b_k,b_{k-1},\dots,b_1)$ be the increasing
rearrangement of $\vec b$. Then we can rewrite the above equation as
\begin{align}
  \notag
  \ts_{\lambda/\nu}^{k-1} \ts_{\mu/\nu}
  &=(-1)^{\inv(\vec b,\vec q)}\det
    \left( (-1)^{\chi(b'_j>q_i)} \ts_{\lambda(q_i,b'_j)/\nu}\right)_{i,j=1}^k\\
\label{eq:bq}
  &=(-1)^{\inv(\vec b,\vec q)+\inv(\pi)} \det
    \left( (-1)^{\chi(p_j-1>q_i)} \ts_{\lambda(q_i,p_j-1)/\nu}\right)_{i,j=1}^k,
\end{align}
where $\pi$ is the permutation of $\{1,2,\dots,k\}$ satisfying
\[
\vec p - \vec 1:=(p_1-1,p_2-1,\dots,p_k-1) = (b'_{\pi(1)}, b'_{\pi(2)},\dots,b'_{\pi(k)}).
\]

Note that $\inv(\pi)$ is the number of pairs $(i,j)$ with $i<j$ and
$\pi(i)>\pi(j)$, where $\pi(i)>\pi(j)$ is equivalent to
$b'_{\pi(i)}>b'_{\pi(j)}$, which in turn is equivalent to $p_i>p_j$. Thus
$\inv(\pi)$ is equal to the number of pairs $(i,j)$ with $i<j$ and $p_i>p_j$. By
the construction of the Lascoux--Pragacz decomposition, if $i<j$, then we must
have either $p_j<q_j<p_i<q_i$ or $p_i<p_j\le q_j<q_i$. This shows that
$\inv(\pi)$ is equal to the number of pairs $(\theta_i,\theta_j)$ of border
strips such that $p(\theta_i)>q(\theta_j)$, which automatically implies $i<j$.
Therefore $\inv(\pi)=\inv(\vec p,\vec q)$. On the other hand, by
Lemma~\ref{lem:C/C}, we have $p_i-1\ne q_j$ for all $1\le i,j\le n$. Thus
$\inv(\pi) = \inv(\vec p,\vec q)=\inv(\vec p-\vec 1,\vec q)=\inv(\vec b,\vec q)$
and $\chi(p_j-1>q_i)=\chi(p_j>q_i)$. Therefore the right hand sides of
\eqref{eq:bq} and \eqref{eq:main_LP1} are equal, which completes the proof of
\eqref{eq:main_LP1}.

The second identity \eqref{eq:main_LP2} can be proved similarly.
\end{proof}

\begin{thm}\label{thm:main_K}
  Let $\lambda,\mu$, and $\nu$ be partitions. Suppose that $\mu\subseteq \lambda$
  and $\theta=(\theta_1,\dots,\theta_k)$ is the Kreiman decomposition of $\lm$.
  Then we have 
  \begin{align}
  \label{eq:main_K1}
    \ts_{\mu/\nu}^{k-1} \ts_{\lambda/\nu}
    &= \det \left( (-1)^{\chi(p_i> q_j)} \ts_{\mu(p_i-1,q_j)/\nu}\right)_{i,j=1}^k,  \\
  \label{eq:main_K2}
    \ts_{\nu/\lambda}^{k-1} \ts_{\nu/\mu}
    &= \det \left( (-1)^{\chi(p_i> q_j)} \ts_{\nu/\mu(p_i-1,q_j)}\right)_{i,j=1}^k.
  \end{align}
\end{thm}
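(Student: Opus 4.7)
The proof parallels that of Theorem~\ref{thm:main_LP}. The plan is to apply Theorem~\ref{thm:main} with the roles of $\lambda$ and $\mu$ interchanged, and then translate the resulting indexing into the language of the Kreiman decomposition via Lemma~\ref{lem:C/C}, which is stated for both the Kreiman and the Lascoux--Pragacz decompositions. Since the Kreiman decomposition uses the inner strip as its cutting strip, Lemma~\ref{lem:C/C} supplies exactly the content-set identities $\vec{b}=\vec{q}$ and $\{a_i\}=\{p_i-1\}$ once the sequences $\vec a, \vec b$ are defined from $\mu,\lambda$ below.

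Concretely, let $\vec{a} = (a_1, \dots, a_k)$ and $\vec{b} = (b_1, \dots, b_k)$ be defined by
\begin{align*}
C_n(\mu) \setminus C_n(\lambda) &= \{a_1 > \dots > a_k\}, \\
C_n(\lambda) \setminus C_n(\mu) &= \{b_1 > \dots > b_k\}.
\end{align*}
Applying \eqref{eq:main1} with $\lambda$ and $\mu$ swapped yields
\[
\ts_{\mu/\nu}^{k-1} \ts_{\lambda/\nu} = (-1)^{\inv(\vec{b}, \vec{a}) + \binom{k}{2}} \det\left( (-1)^{\chi(b_j > a_i)} \ts_{\mu(a_i, b_j)/\nu} \right)_{i,j=1}^{k}.
\]
By Lemma~\ref{lem:C/C}, $\vec{b} = \vec{q}$ and $\{a_1, \dots, a_k\} = \{p_1 - 1, \dots, p_k - 1\}$. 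Let $\pi$ be the permutation of $\{1,\dots,k\}$ with $a_{\pi(i)} = p_i - 1$. Permuting the rows of the determinant to reindex them by $i$ contributes a factor $(-1)^{\inv(\pi)}$, and the elementary identity $\chi(q_j > p_i - 1) = 1 - \chi(p_i > q_j)$ for integers introduces a further factor $(-1)^{k}$ after an entrywise sign flip. This produces
\[
\ts_{\mu/\nu}^{k-1} \ts_{\lambda/\nu} = (-1)^{\inv(\vec{b}, \vec{a}) + \binom{k}{2} + \inv(\pi) + k} \det\left( (-1)^{\chi(p_i > q_j)} \ts_{\mu(p_i-1, q_j)/\nu} \right)_{i,j=1}^{k}.
\]

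The main obstacle is to verify that the total exponent $\inv(\vec{b}, \vec{a}) + \binom{k}{2} + \inv(\pi) + k$ is even. This is precisely where the specific combinatorial geometry of the Kreiman decomposition enters, playing the role of the nested-or-separated dichotomy used in the Lascoux--Pragacz case. I expect that for every pair $i<j$ (ordered so that $q_1 \ge \dots \ge q_k$), the strips $\theta_i$ and $\theta_j$ are either content-nested, with $p_i \le p_j \le q_j \le q_i$, or content-separated, with $p_i > q_j$, both cases being forced by the fact that the cutting strip is the inner strip of $\lambda$. A case-by-case parity count of $\inv(\pi)$ and $\inv(\vec b,\vec a)$ under this dichotomy then yields the required evenness of the exponent, finishing \eqref{eq:main_K1}. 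The second identity \eqref{eq:main_K2} is proved identically, replacing \eqref{eq:main1} by \eqref{eq:main2} throughout.
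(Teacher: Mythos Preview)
Your approach is essentially the same as the paper's: apply Theorem~\ref{thm:main} with $\lambda$ and $\mu$ swapped, identify $\vec b=\vec q$ and $\{a_i\}=\{p_i-1\}$ via Lemma~\ref{lem:C/C}, and reduce everything to a parity check governed by the nested-or-separated dichotomy of the Kreiman strips. The paper routes the permutation through the increasing rearrangement $\vec a'$ and then invokes ``the same argument as in the proof of Theorem~\ref{thm:main_LP}'' to get $\inv(\pi)=\inv(\vec p,\vec q)=k^2-\inv(\vec q,\vec a)$, which kills the sign directly; your bookkeeping via a direct row permutation is an equivalent repackaging, and your exponent $\inv(\vec b,\vec a)+\binom{k}{2}+\inv(\pi)+k$ is indeed even once the dichotomy is used (each pair contributes evenly). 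The only soft spot is that you phrase the dichotomy as ``I expect'' rather than asserting and verifying it, but the paper is equally terse here, and the claim is correct for the Kreiman decomposition.
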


\begin{proof}
  Let $\vec a=(a_1,\dots,a_k)$ and $\vec b=(b_1,\dots,b_k)$ be the sequences
  defined by
\begin{align*}
  C_n(\mu)\setminus C_n(\lambda) &= \{a_1>a_2>\dots>a_k\},\\
  C_n(\lambda)\setminus C_n(\mu) &= \{b_1>b_2>\dots>b_k\}.
\end{align*}
Then by \eqref{eq:main1} with the roles of $\lambda$ and $\mu$ switched, we have
\[
  \ts_{\mu/\nu}^{k-1} \ts_{\lambda/\nu}
  =(-1)^{\inv(\vec b,\vec a)+\binom k2}\det
    \left( (-1)^{\chi(b_j>a_i)} \ts_{\mu(a_i,b_j)/\nu}\right)_{i,j=1}^k.
\]
By Lemma~\ref{lem:C/C}, we have $\vec b = \vec q$ and 
$\{p_1-1,p_2-1,\dots,p_k-1\}=\{a_1,a_2,\dots,a_k\}$.
Let $\vec a' = (a_1',a_2',\dots,a_k') = (a_k,a_{k-1},\dots,a_1)$ be the
increasing rearrangement of $\vec a$. Then we can rewrite the above equation as
\begin{align}
  \notag
  \ts_{\mu/\nu}^{k-1} \ts_{\lambda/\nu}
  &=(-1)^{\inv(\vec q,\vec a)}\det
    \left( (-1)^{\chi(q_j>a'_i)} \ts_{\mu(a'_i,q_j)/\nu}\right)_{i,j=1}^k\\
\label{eq:bq2}
  &=(-1)^{\inv(\vec q,\vec a)+\inv(\pi)}\det
    \left( (-1)^{\chi(q_j>p_i-1)} \ts_{\mu(p_i-1,q_j)/\nu}\right)_{i,j=1}^k,
\end{align}
where $\pi$ is the permutation of $\{1,2,\dots,k\}$ satisfying
\[
\vec p-\vec 1:= (p_1-1,p_2-1,\dots,p_k-1) = (a'_{\pi(1)}, a'_{\pi(2)},\dots,a'_{\pi(k)}).
\]

By the same argument as in the proof of Theorem~\ref{thm:main_LP}, we have
\[
  \inv(\pi) = \inv(\vec p,\vec q)=\inv(\vec p-\vec 1,\vec q)
  =\inv(\vec a,\vec q) = k^2- \inv(\vec q,\vec a).
\]
Thus we can rewrite \eqref{eq:bq2} as
\[
  \ts_{\mu/\nu}^{k-1} \ts_{\lambda/\nu}
  =\det \left( (-1)^{1-\chi(q_j>p_i-1)} \ts_{\mu(p_i-1,q_j)/\nu}\right)_{i,j=1}^k.
\]
Since $q_j\ne p_i-1$ for all $1\le i,j\le k$ by Lemma~\ref{lem:C/C}, we have
$1-\chi(q_j>p_i-1)=\chi(q_j\le p_i-1)=\chi(q_j<p_i)$, which together with the
above equation shows \eqref{eq:main_K1}.

The second identity \eqref{eq:main_K2} can be proved similarly.
\end{proof}

\begin{exam}\label{exam:LSthm} Let $\lambda=(6,6,6,3,3)$, $\mu=(4,3,2)$ and
  $\nu=(7,6,6,5,3,2)$. The
  Lascoux--Pragacz decomposition of $\lambda/\mu$ and the skew shape $\nl$ are
  shown in Figure~\ref{fig:LP}. Then \eqref{eq:main_LP2} of Theorem
  \ref{thm:main_LP} implies that $\ts_{\nu/\lambda}^2\ts_{\nu/\mu}$ is equal to
  the determinant shown in Figure~\ref{fig:det}.

  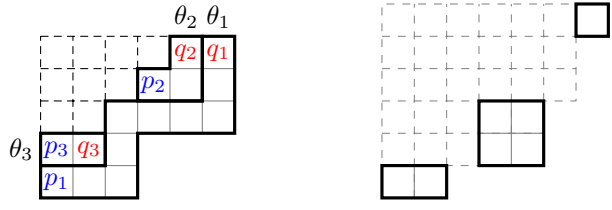
\begin{figure}
    \centering
\begin{tikzpicture}[scale=.43]
\draw[densely dashed] (0,5)--(4,5)
				   (0,4)--(3,4)
				   (0,3)--(2,3)
				   (0,2)--(0,5)
				   (1,2)--(1,5)
				   (2,3)--(2,5)
				   (3,4)--(3,5);
\draw[color=gray] (4,5)--(6,5)
			    (3,4)--(6,4)
			    (2,3)--(6,3)
			    (0,2)--(6,2)
			    (0,1)--(3,1)
			    (0,0)--(3,0)
			    (0,0)--(0,2)
			    (1,0)--(1,2)
			    (2,0)--(2,3)
			    (3,0)--(3,4)
			    (4,2)--(4,5)
			    (5,2)--(5,5)
			    (6,2)--(6,5);
\draw[thick, line width=1.2pt] (0,1)--(2,1)--(2,2)--(0,2)--(0,1)--cycle;
\draw[thick, line width=1.2pt] (0,0)--(3,0)--(3,2)--(6,2)--(6,5)--(5,5)--(5,3)--(2,3)--(2,1)--(0,1)--(0,0)--cycle;
\draw[thick, line width=1.2pt] (3,3)--(5,3)--(5,5)--(4,5)--(4,4)--(3,4)--(3,3)--cycle;
\node[] at (.5, .5) {$\color{blue} p_1$};
\node[] at (5.5, 4.5) {$\color{red} q_1$};
\node[] at (3.5, 3.5) {$\color{blue} p_2$};
\node[] at (4.5, 4.5) {$\color{red} q_2$};
\node[] at (.5, 1.5) {$\color{blue} p_3$};
\node[] at (1.5, 1.5) {$\color{red} q_3$};
\node[] at (5.5, 5.6) {$\theta_1$};
\node[] at (-.6, 1.5) {$\theta_3$};
\node[] at (4.5, 5.6) {$\theta_2$};
\end{tikzpicture} \qquad\qquad\quad
\begin{tikzpicture}[scale=.43]
\draw[dashed, line width = .2pt, color=gray] (0,0)--(2,0)--(2,1)--(3,1)--(3,2)--(5,2)--(5,3)--(6,3)--(6,5)--(7,5)--(7,6)--(0,6)--(0,0)--cycle
		(0,5)--(6,5)
		(0,4)--(6,4)
		(0,3)--(5,3)
		(0,2)--(3,2)
		(0,1)--(2,1)
		(1,0)--(1,6)
		(2,1)--(2,6)
		(3,2)--(3,6)
		(4,2)--(4,6)
		(5,3)--(5,6)
		(6,5)--(6,6);
\draw[color=gray] (1,0)--(1,1)
		(4,1)--(4,3)
		(3,2)--(5,2);
\draw[line width=1.2pt] (0,0)--(2,0)--(2,1)--(0,1)--(0,0)--cycle
			(3,1)--(5,1)--(5,3)--(3,3)--(3,1)--cycle
			(6,5)--(7,5)--(7,6)--(6,6)--(6,5)--cycle;
\end{tikzpicture}
    \caption{The Lascoux--Pragacz decomposition of $\lambda/\mu$ on the left
      and the skew shape $\nl$ on the right. }
    \label{fig:LP}
  \end{figure}

  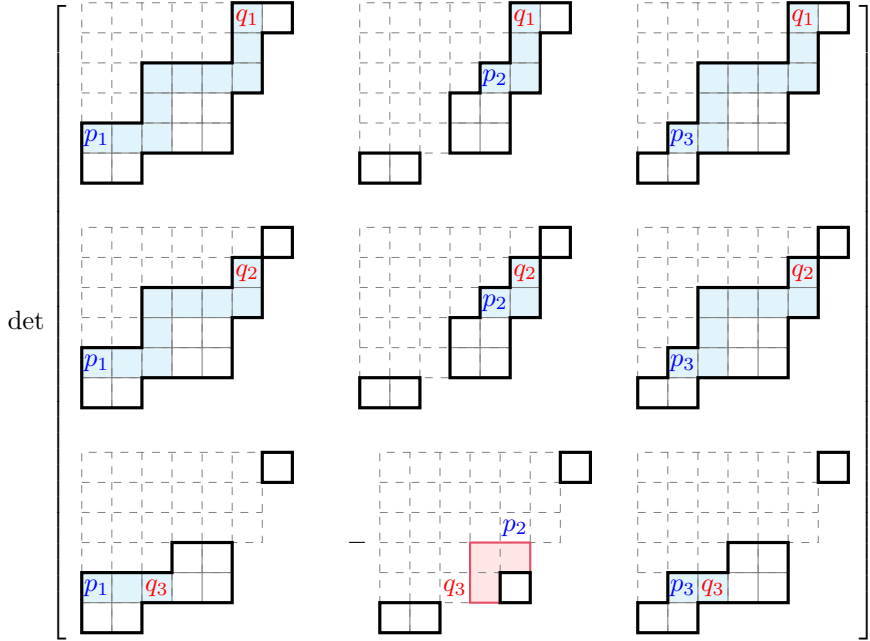
\begin{figure}
    \centering
\[
\det \begin{bmatrix}
~\begin{tikzpicture}[scale=.4]
\fill[fill=cyan!10!white] (0,1)--(3,1)--(3,3)--(6,3)--(6,6)--(5,6)--(5,4)--(2,4)--(2,2)--(0,2)--(0,1)--cycle;
\draw[dashed, line width = .2pt, color=gray] (0,0)--(2,0)--(2,1)--(3,1)--(3,2)--(5,2)--(5,3)--(6,3)--(6,5)--(7,5)--(7,6)--(0,6)--(0,0)--cycle
		(0,5)--(6,5)
		(0,4)--(6,4)
		(0,3)--(5,3)
		(0,2)--(3,2)
		(0,1)--(2,1)
		(1,0)--(1,6)
		(2,1)--(2,6)
		(3,2)--(3,6)
		(4,2)--(4,6)
		(5,3)--(5,6)
		(6,5)--(6,6);
\draw[color=gray] (5,5)--(6,5)
		(5,4)--(6,4)
		(2,3)--(5,3)
		(2,2)--(5,2)
		(0,1)--(2,1)
		(1,0)--(1,2)
		(2,1)--(2,2)
		(3,1)--(3,4)
		(4,1)--(4,4)
		(5,3)--(5,4)
		(6,5)--(6,6);
\draw[line width=1.2pt] (0,0)--(2,0)--(2,1)--(3,1)--(5,1)--(5,3)--(6,3)--(6,5)--(7,5)--(7,6)--(5,6)--(5,4)--(2,4)--(2,2)--(0,2)--(0,0)--cycle;
\node[] at (.5, 1.5) {$\color{blue} p_1$};
\node[] at (5.5, 5.5) {$\color{red} q_1$};
\end{tikzpicture}~&~\begin{tikzpicture}[scale=.4]
\fill[fill=cyan!10!white] (4,3)--(6,3)--(6,6)--(5,6)--(5,4)--(4,4)--(4,3)--cycle;
\draw[dashed, line width = .2pt, color=gray] (0,0)--(2,0)--(2,1)--(3,1)--(3,2)--(5,2)--(5,3)--(6,3)--(6,5)--(7,5)--(7,6)--(0,6)--(0,0)--cycle
		(0,5)--(6,5)
		(0,4)--(6,4)
		(0,3)--(5,3)
		(0,2)--(3,2)
		(0,1)--(2,1)
		(1,0)--(1,6)
		(2,1)--(2,6)
		(3,2)--(3,6)
		(4,2)--(4,6)
		(5,3)--(5,6)
		(6,5)--(6,6);
\draw[color=gray] (1,0)--(1,1)
		(3,2)--(5,2)
		(4,1)--(4,3)
		(5,3)--(5,4)
		(6,5)--(6,6)
		(5,5)--(6,5)
		(5,4)--(6,4)
		(4,3)--(5,3);
\draw[line width=1.2pt] (0,0)--(2,0)--(2,1)--(0,1)--(0,0)--cycle
		(3,1)--(5,1)--(5,3)--(6,3)--(6,5)--(7,5)--(7,6)--(5,6)--(5,4)--(4,4)--(4,3)--(3,3)--(3,1)--cycle;
\node[] at (4.5, 3.5) {$\color{blue} p_2$};
\node[] at (5.5, 5.5) {$\color{red} q_1$};
\end{tikzpicture}~&~\begin{tikzpicture}[scale=.4]
\fill[fill=cyan!10!white] (1,1)--(3,1)--(3,3)--(6,3)--(6,6)--(5,6)--(5,4)--(2,4)--(2,2)--(1,2)--(1,1)--cycle;
\draw[dashed, line width = .2pt, color=gray] (0,0)--(2,0)--(2,1)--(3,1)--(3,2)--(5,2)--(5,3)--(6,3)--(6,5)--(7,5)--(7,6)--(0,6)--(0,0)--cycle
		(0,5)--(6,5)
		(0,4)--(6,4)
		(0,3)--(5,3)
		(0,2)--(3,2)
		(0,1)--(2,1)
		(1,0)--(1,6)
		(2,1)--(2,6)
		(3,2)--(3,6)
		(4,2)--(4,6)
		(5,3)--(5,6)
		(6,5)--(6,6);
\draw[color=gray] (5,5)--(6,5)
		(3,2)--(5,2)
		(5,4)--(6,4)
		(2,3)--(5,3)
		(2,2)--(3,2)
		(0,1)--(2,1)
		(1,0)--(1,2)
		(2,1)--(2,2)
		(3,1)--(3,4)
		(4,1)--(4,4)
		(5,3)--(5,4)
		(6,5)--(6,6);
\draw[line width=1.2pt] (0,0)--(2,0)--(2,1)--(3,1)--(5,1)--(5,3)--(6,3)--(6,5)--(7,5)--(7,6)--(5,6)--(5,4)--(2,4)--(2,2)--(1,2)--(1,1)--(0,1)--(0,0)--cycle;
\node[] at (1.5, 1.5) {$\color{blue} p_3$};
\node[] at (5.5, 5.5) {$\color{red} q_1$};
\end{tikzpicture}~\\\\~\begin{tikzpicture}[scale=.4]
\fill[fill=cyan!10!white] (0,1)--(3,1)--(3,3)--(6,3)--(6,5)--(5,5)--(5,4)--(2,4)--(2,2)--(0,2)--(0,1)--cycle;
\draw[dashed, line width = .2pt, color=gray] (0,0)--(2,0)--(2,1)--(3,1)--(3,2)--(5,2)--(5,3)--(6,3)--(6,5)--(7,5)--(7,6)--(0,6)--(0,0)--cycle
		(0,5)--(6,5)
		(0,4)--(6,4)
		(0,3)--(5,3)
		(0,2)--(3,2)
		(0,1)--(2,1)
		(1,0)--(1,6)
		(2,1)--(2,6)
		(3,2)--(3,6)
		(4,2)--(4,6)
		(5,3)--(5,6)
		(6,5)--(6,6);
\draw[color=gray] (5,5)--(6,5)
		(5,4)--(6,4)
		(2,3)--(5,3)
		(2,2)--(3,2)
		(0,1)--(2,1)
		(1,0)--(1,2)
		(2,1)--(2,2)
		(3,2)--(5,2)
		(3,1)--(3,4)
		(4,1)--(4,4)
		(5,3)--(5,4)
		(6,5)--(6,6);
\draw[line width=1.2pt] (0,0)--(2,0)--(2,1)--(3,1)--(5,1)--(5,3)--(6,3)--(6,5)--(7,5)--(7,6)--(6,6)--(6,5)--(5,5)--(5,4)--(2,4)--(2,2)--(0,2)--(0,0)--cycle;
\node[] at (.5, 1.5) {$\color{blue} p_1$};
\node[] at (5.5, 4.5) {$\color{red} q_2$};
\end{tikzpicture}~&~\begin{tikzpicture}[scale=.4]
\fill[fill=cyan!10!white] (4,3)--(6,3)--(6,5)--(5,5)--(5,4)--(4,4)--(4,3)--cycle;
\draw[dashed, line width = .2pt, color=gray] (0,0)--(2,0)--(2,1)--(3,1)--(3,2)--(5,2)--(5,3)--(6,3)--(6,5)--(7,5)--(7,6)--(0,6)--(0,0)--cycle
		(0,5)--(6,5)
		(0,4)--(6,4)
		(0,3)--(5,3)
		(0,2)--(3,2)
		(0,1)--(2,1)
		(1,0)--(1,6)
		(2,1)--(2,6)
		(3,1)--(3,6)
		(4,1)--(4,6)
		(5,3)--(5,6)
		(6,5)--(6,6);
\draw[color=gray] (1,0)--(1,1)
		(3,2)--(5,2)
		(4,1)--(4,3)
		(5,3)--(5,4)
		(6,5)--(6,6)
		(5,5)--(6,5)
		(5,4)--(6,4)
		(4,3)--(5,3);
\draw[line width=1.2pt] (0,0)--(2,0)--(2,1)--(0,1)--(0,0)--cycle
		(3,1)--(5,1)--(5,3)--(6,3)--(6,5)--(7,5)--(7,6)--(6,6)--(6,5)--(5,5)--(5,4)--(4,4)--(4,3)--(3,3)--(3,1)--cycle;
\node[] at (4.5, 3.5) {$\color{blue} p_2$};
\node[] at (5.5, 4.5) {$\color{red} q_2$};
\end{tikzpicture}~&~\begin{tikzpicture}[scale=.4]
\fill[fill=cyan!10!white] (1,1)--(3,1)--(3,3)--(6,3)--(6,5)--(5,5)--(5,4)--(2,4)--(2,2)--(1,2)--(1,1)--cycle;
\draw[dashed, line width = .2pt, color=gray] (0,0)--(2,0)--(2,1)--(3,1)--(3,2)--(5,2)--(5,3)--(6,3)--(6,5)--(7,5)--(7,6)--(0,6)--(0,0)--cycle
		(0,5)--(6,5)
		(0,4)--(6,4)
		(0,3)--(5,3)
		(0,2)--(3,2)
		(0,1)--(2,1)
		(1,0)--(1,6)
		(2,1)--(2,6)
		(3,2)--(3,6)
		(4,2)--(4,6)
		(5,3)--(5,6)
		(6,5)--(6,6);
\draw[color=gray] (5,5)--(6,5)
		(5,4)--(6,4)
		(2,3)--(5,3)
		(2,2)--(3,2)
		(0,1)--(2,1)
		(1,0)--(1,2)
		(2,1)--(2,2)
		(3,1)--(3,4)
		(3,2)--(5,2)
		(4,1)--(4,4)
		(5,3)--(5,4)
		(6,5)--(6,6);
\draw[line width=1.2pt] (0,0)--(2,0)--(2,1)--(3,1)--(5,1)--(5,3)--(6,3)--(6,5)--(7,5)--(7,6)--(6,6)--(6,5)--(5,5)--(5,4)--(2,4)--(2,2)--(1,2)--(1,1)--(0,1)--(0,0)--cycle;
\node[] at (1.5, 1.5) {$\color{blue} p_3$};
\node[] at (5.5, 4.5) {$\color{red} q_2$};
\end{tikzpicture}~\\\\~
\begin{tikzpicture}[scale=.4]
\fill[fill=cyan!10!white] (0,1)--(3,1)--(3,2)--(0,2)--(0,1)--cycle;
\draw[dashed, line width = .2pt, color=gray] (0,0)--(2,0)--(2,1)--(3,1)--(3,2)--(5,2)--(5,3)--(6,3)--(6,5)--(7,5)--(7,6)--(0,6)--(0,0)--cycle
		(0,5)--(6,5)
		(0,4)--(6,4)
		(0,3)--(5,3)
		(0,2)--(3,2)
		(0,1)--(2,1)
		(1,0)--(1,6)
		(2,1)--(2,6)
		(3,2)--(3,6)
		(4,2)--(4,6)
		(5,3)--(5,6)
		(6,5)--(6,6);
\draw[color=gray] (0,1)--(2,1)
		(1,0)--(1,2)
		(2,1)--(2,2)
		(3,1)--(3,2)
		(3,2)--(5,2)
		(4,1)--(4,3);
\draw[line width=1.2pt] (0,0)--(2,0)--(2,1)--(5,1)--(5,3)--(3,3)--(3,2)--(0,2)--(0,0)--cycle
		(6,5)--(7,5)--(7,6)--(6,6)--(6,5)--cycle;
\node[] at (.5, 1.5) {$\color{blue} p_1$};
\node[] at (2.5, 1.5) {$\color{red} q_3$};
\end{tikzpicture}~&~
\begin{tikzpicture}[scale=.4]
\fill[fill=red!10!white] (3,1)--(4,1)--(4,2)--(5,2)--(5,3)--(3,3)--(3,1)--cycle;
\draw[dashed, line width = .2pt, color=gray] (0,0)--(2,0)--(2,1)--(3,1)--(3,2)--(5,2)--(5,3)--(6,3)--(6,5)--(7,5)--(7,6)--(0,6)--(0,0)--cycle
		(0,5)--(6,5)
		(0,4)--(6,4)
		(0,3)--(5,3)
		(0,2)--(3,2)
		(0,1)--(2,1)
		(1,0)--(1,6)
		(2,1)--(2,6)
		(3,2)--(3,6)
		(4,2)--(4,6)
		(5,3)--(5,6)
		(6,5)--(6,6);
\draw[color=gray] (1,0)--(1,1);
\draw[thick, color=dredcolor] (3,1)--(4,1)--(4,2)--(5,2)--(5,3)--(3,3)--(3,1)--cycle;
\draw[line width=1.2pt] (0,0)--(2,0)--(2,1)--(0,1)--(0,0)--cycle
		(4,1)--(5,1)--(5,2)--(4,2)--(4,1)--cycle
		(6,5)--(7,5)--(7,6)--(6,6)--(6,5)--cycle;
\node[] at (4.5, 3.5) {$\color{blue} p_2$};
\node[] at (2.5, 1.5) {$\color{red} q_3$};
\node[] at (-.75, 3) {$ -$};
\end{tikzpicture}~&~
\begin{tikzpicture}[scale=.4]
\fill[fill=cyan!10!white] (1,1)--(3,1)--(3,2)--(1,2)--(1,1)--cycle;
\draw[dashed, line width = .2pt, color=gray] (0,0)--(2,0)--(2,1)--(3,1)--(3,2)--(5,2)--(5,3)--(6,3)--(6,5)--(7,5)--(7,6)--(0,6)--(0,0)--cycle
		(0,5)--(6,5)
		(0,4)--(6,4)
		(0,3)--(5,3)
		(0,2)--(3,2)
		(0,1)--(2,1)
		(1,0)--(1,6)
		(2,1)--(2,6)
		(3,2)--(3,6)
		(4,2)--(4,6)
		(5,3)--(5,6)
		(6,5)--(6,6);
\draw[color=gray] (0,1)--(2,1)
		(1,0)--(1,2)
		(2,1)--(2,2)
		(3,1)--(3,2)
		(3,2)--(5,2)
		(4,1)--(4,3);
\draw[line width=1.2pt] (0,0)--(2,0)--(2,1)--(5,1)--(5,3)--(3,3)--(3,2)--(1,2)--(1,1)--(0,1)--(0,0)--cycle
		(6,5)--(7,5)--(7,6)--(6,6)--(6,5)--cycle;
\node[] at (1.5, 1.5) {$\color{blue} p_3$};
\node[] at (2.5, 1.5) {$\color{red} q_3$};
\end{tikzpicture}~
\end{bmatrix}
\]   
\caption{ An illustrate of the determinant $\det \left(
    (-1)^{\chi(p_j>q_i)} \ts_{\nu/\lambda(q_i,p_j-1)}\right)_{i,j=1}^k$. The
  $(i,j)$-entry shows the sign $(-1)^{\chi(p_j>q_i)}$ and the (not necessarily
  connected) skew shape $\nu/\lambda(q_i,p_j-1)$ in thick lines. The partition
  $\lambda(q_i,p_j-1)$ is obtained from $\lambda$ by removing (blue) or adding
  (red) a border strip depending on the sign.}
    \label{fig:det}
  \end{figure}
\end{exam}

If $\nu=\lambda$, then the $(i,j)$-entry of the matrix in \eqref{eq:main_LP2} is
\[
  (-1)^{\chi(p_j>q_i)} \ts_{\nu/\lambda(q_i,p_j-1)} =
  \begin{cases}
    \ts_{\lambda^0[p_j,q_i]} & \mbox{if $p_j\le q_i$},\\
    \ts_{\lambda} & \mbox{if $p_j-1 = q_i$},\\
    0 & \mbox{if $p_j-1\ge q_i$},
  \end{cases}
\]
where the right hand side is exactly the same as the definition of
$\ts_{\lambda^0[p_j,q_i]}$. Therefore we obtain the following Lascoux--Pragacz
identity for $\ts_\lm$, in which the case $\mu=\emptyset$ is proved by Macdonald
\cite[(9,9)]{Macdonald_Schur}.

\begin{cor}
  Suppose that $\mu\subseteq \lambda$ and $\theta=(\theta_1,\dots,\theta_k)$ is
  the Lascoux--Pragacz decomposition of $\lm$. Then we have
\[
     \ts_{\lambda/\mu} = \det \left(\ts_{\lambda^0[p_j,q_i]}\right)_{i,j=1}^k.
\]
\end{cor}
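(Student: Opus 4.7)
The plan is to deduce the corollary directly from equation~\eqref{eq:main_LP2} of Theorem~\ref{thm:main_LP} by specializing at $\nu=\lambda$. Two simplifications are needed: the left-hand side must collapse to $\ts_{\lambda/\mu}$, and each matrix entry on the right-hand side must be identified with $\ts_{\lambda^0[p_j,q_i]}$.

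For the left-hand side, I would show $\ts_{\lambda/\lambda}=1$. By the determinantal definition, $\ts_{\lambda/\lambda}$ is the determinant of $(h_{\lambda_i-\lambda_j+j-i,\,\lambda_j-j+1})_{i,j=1}^n$. The diagonal entries are $h_{0,\,\lambda_i-i+1}=1$, and for $i>j$ we have $\lambda_i\le \lambda_j$ together with $j-i<0$, so the first subscript is strictly negative and the entry vanishes. The matrix is upper triangular with unit diagonal, so its determinant is $1$, and hence $\ts_{\lambda/\lambda}^{k-1}\ts_{\lambda/\mu}=\ts_{\lambda/\mu}$.

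For the right-hand side, I would match each $(i,j)$-entry $(-1)^{\chi(p_j>q_i)}\ts_{\lambda/\lambda(q_i,p_j-1)}$ with $\ts_{\lambda^0[p_j,q_i]}$ by case analysis on $p_j$ versus $q_i$. A crucial preliminary observation from Lemma~\ref{lem:C/C} is that $p_j-1\ne q_i$ for all $i,j$ (including $i=j$, since a border strip has $p_j\le q_j$), so the borderline case in formula~\eqref{eq:la(a,b)} never arises in the Lascoux--Pragacz setting. When $p_j\le q_i$, formula~\eqref{eq:la(a,b)} gives $\lambda(q_i,p_j-1)=\lambda\setminus\lambda^0[p_j,q_i]$; the skew shape $\lambda/\lambda(q_i,p_j-1)$ is the border strip $\lambda^0[p_j,q_i]$, the sign prefactor is $+1$, and the entry becomes $\ts_{\lambda^0[p_j,q_i]}$. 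When $p_j>q_i+1$, formula~\eqref{eq:la(a,b)} gives $\lambda(q_i,p_j-1)=\lambda\cup\lambda^+[q_i+1,p_j-1]\supsetneq\lambda$, so $\ts_{\lambda/\lambda(q_i,p_j-1)}=0$, which matches the convention $\ts_{\lambda^0[p_j,q_i]}=0$ for undefined intervals.

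The main obstacle is verifying the vanishing statement $\ts_{\lambda/\rho}=0$ whenever $\rho\supsetneq\lambda$. This is proved by locating the smallest index $i_0$ with $\rho_{i_0}>\lambda_{i_0}$ and observing that for all $i\ge i_0$ and all $j\le i_0$ one has $\lambda_i-\rho_j-i+j<0$ (since $\rho_j\ge \rho_{i_0}>\lambda_{i_0}\ge \lambda_i$ and $j-i\le 0$), so the lower-left $(n-i_0+1)\times i_0$ block of the defining matrix is zero, making the matrix singular. Once both sides are identified entrywise, substituting into \eqref{eq:main_LP2} yields the stated formula.
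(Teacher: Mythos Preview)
Your proof is correct and follows essentially the same approach as the paper: specialize $\nu=\lambda$ in \eqref{eq:main_LP2} and identify each entry with $\ts_{\lambda^0[p_j,q_i]}$ via a case analysis. You are in fact slightly more careful than the paper, since you explicitly invoke Lemma~\ref{lem:C/C} (together with $p_i\le q_i$) to rule out the case $p_j-1=q_i$, and you supply the routine verifications that $\ts_{\lambda/\lambda}=1$ and that $\ts_{\lambda/\rho}=0$ when $\rho\supsetneq\lambda$.
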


By setting $\nu=\emptyset$ in \eqref{eq:main_LP1} and \eqref{eq:main_K1}, we obtain the
following two corollaries.

\begin{cor}\label{cor:main_LP}
  Suppose that $\mu\subseteq \lambda$ and $\theta=(\theta_1,\dots,\theta_k)$ is
  the Lascoux--Pragacz decomposition of $\lm$. Then we have
\[
    \ts_{\lambda}^{k-1} \ts_{\mu}
    = \det \left( (-1)^{\chi(p_j>q_i)} \ts_{\lambda(q_i,p_j-1)}\right)_{i,j=1}^k.
\]
\end{cor}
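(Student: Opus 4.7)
The plan is to obtain this identity as an immediate specialization of Theorem~\ref{thm:main_LP}, which has already been established. Specifically, I would apply equation~\eqref{eq:main_LP1} with the choice $\nu = \emptyset$, and then use the convention that a skew shape $\alpha/\emptyset$ is identified with $\alpha$, so that $\ts_{\lambda/\emptyset} = \ts_\lambda$, $\ts_{\mu/\emptyset} = \ts_\mu$, and $\ts_{\lambda(q_i, p_j-1)/\emptyset} = \ts_{\lambda(q_i, p_j-1)}$.

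The key point is that the hypotheses needed for Theorem~\ref{thm:main_LP} --- namely that $\mu \subseteq \lambda$ and $\theta = (\theta_1, \ldots, \theta_k)$ is the Lascoux--Pragacz decomposition of $\lambda/\mu$ --- are precisely the hypotheses assumed in the corollary. Moreover, setting $\nu = \emptyset$ is legitimate since $\emptyset \subseteq \mu \subseteq \lambda$ and the combinatorial quantities $p_i = p(\theta_i)$, $q_i = q(\theta_i)$, together with the partitions $\lambda(q_i, p_j-1)$, depend only on $\lambda$ and $\mu$, not on $\nu$. Thus the signs $(-1)^{\chi(p_j > q_i)}$ in the determinant are unchanged by the specialization.

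There is no genuine obstacle here; the whole content of the corollary is contained in Theorem~\ref{thm:main_LP}. The only thing to verify is that the substitution $\nu = \emptyset$ is valid in each entry of the determinant, which follows from the definition of Macdonald's 9th variation: when $\mu_j = 0$ for all $j$, the matrix entry $h_{\lambda_i - i + j,\, -j+1}$ is still well-defined (with the convention $h_{0,s} = 1$ and $h_{r,s} = 0$ for $r < 0$), so $\ts_{\alpha/\emptyset}$ makes sense and equals $\ts_\alpha$ for any partition $\alpha$. Performing this substitution in both sides of \eqref{eq:main_LP1} yields exactly the claimed formula, completing the proof.
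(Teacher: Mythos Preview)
Your proposal is correct and matches the paper's own argument exactly: the paper states just before this corollary that it is obtained by setting $\nu=\emptyset$ in \eqref{eq:main_LP1}.
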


\begin{cor}\label{cor:main_K}
  Suppose that $\mu\subseteq \lambda$ and $\theta=(\theta_1,\dots,\theta_k)$ is
  the Kreiman decomposition of $\lm$. Then we have 
\[
  \ts_{\mu}^{k-1} \ts_{\lambda} = \det \left( (-1)^{\chi(p_i> q_j)}
    \ts_{\mu(p_i-1,q_j)} \right)_{i,j=1}^k.
\]
\end{cor}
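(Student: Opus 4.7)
The plan is simply to specialize Theorem~\ref{thm:main_K} to the case $\nu=\emptyset$. By convention, $\ts_{\alpha/\emptyset}=\ts_\alpha$ for any partition $\alpha$, so setting $\nu=\emptyset$ in \eqref{eq:main_K1} turns the left-hand side $\ts_{\mu/\nu}^{k-1}\ts_{\lambda/\nu}$ into $\ts_\mu^{k-1}\ts_\lambda$ and each entry $\ts_{\mu(p_i-1,q_j)/\nu}$ into $\ts_{\mu(p_i-1,q_j)}$. The sign $(-1)^{\chi(p_i>q_j)}$ is unchanged since it depends only on $\theta$.

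The only point that requires any verification is that the hypotheses of Theorem~\ref{thm:main_K} are still satisfied, namely that $\mu\subseteq\lambda$ and that $\theta$ is the Kreiman decomposition of $\lm$; both are given in the statement of the corollary. One should also note that the empty partition $\emptyset$ can be regarded as a partition with at most $n$ parts (by padding with zeros), so the ambient dimension assumption in Theorem~\ref{thm:main_K} poses no problem: one chooses $n\ge \ell(\lambda)$ and treats $\lambda,\mu,\emptyset$ as elements of $\Par_n$.

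Thus the proof collapses to one line: apply Theorem~\ref{thm:main_K} with $\nu=\emptyset$. There is no genuine obstacle; the substantive work was already carried out in the proof of Theorem~\ref{thm:main_K}, which in turn rests on the Bazin identity (Lemma~\ref{lem:BS}) and the content bookkeeping of Lemma~\ref{lem:C/C}. Consequently I expect the written proof to consist of a single sentence invoking \eqref{eq:main_K1}.
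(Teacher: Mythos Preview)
Your proposal is correct and matches the paper's own derivation exactly: the paper states that Corollaries~\ref{cor:main_LP} and~\ref{cor:main_K} follow by setting $\nu=\emptyset$ in \eqref{eq:main_LP1} and \eqref{eq:main_K1}, respectively. There is nothing to add.
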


\begin{exam} 
  For $\lambda/\mu=(6,6,6,3,3)/(4,3,2)$, the Lascoux--Pragacz decomposition of
  $\lambda/\mu$ is shown in Figure~\ref{fig:LP}. Then
  Corollary~\ref{cor:main_LP} implies that 
  $\ts_{\lambda}^2 \ts_{\mu}$ is equal to the determinant shown in Figure~\ref{fig:det2}.

  \begin{figure}
    \centering
\[
\ts_{\lambda}^2 \ts_{\mu}= \det \begin{bmatrix}
~\begin{tikzpicture}[scale=.4]
\fill[fill=red!15!white] (0,0) rectangle (3,1);
\fill[fill=red!15!white] (2,1) rectangle (3,3);
\fill[fill=red!15!white] (3,2) rectangle (6,3);
\fill[fill=red!15!white] (5,3) rectangle (6,5);
\draw[thick, line width=1.2pt, pink] (0,0)--(3,0)--(3,2)--(6,2)--(6,5)--(5,5)--(5,3)--(2,3)--(2,1)--(0,1)--(0,0)--cycle;
\draw[color=gray] (0,0)--(3,0)--(3,2)--(6,2)--(6,5)--(0,5)--(0,0)--cycle;
\draw[color=gray] (0,4)--(6,4)
			   (0,3)--(6,3)
			   (0,2)--(3,2)
			   (0,1)--(3,1)
			   (1,0)--(1,5)
			   (2,0)--(2,5)
			   (3,2)--(3,5)
			   (4,2)--(4,5)
			   (5,2)--(5,5);
\draw[thick, line width=1.2pt] (0,1)--(2,1)--(2,3)--(5,3)--(5,5)--(0,5)--(0,1)--cycle;
\node[] at (.5, .5) {$\color{blue} p_1$};
\node[] at (5.5, 4.5) {$\color{red} q_1$};
\node[] at (4.2, 2.5) {\tiny\color{red}removed};
\end{tikzpicture}~
&~ \begin{tikzpicture}[scale=.4]
\fill[fill=red!15!white] (4,2) rectangle (6,3);
\fill[fill=red!15!white] (5,3) rectangle (6,5);
\draw[color=gray] (0,0)--(3,0)--(3,2)--(6,2)--(6,5)--(0,5)--(0,0)--cycle;
\draw[color=gray] (0,4)--(6,4)
			   (0,3)--(6,3)
			   (0,2)--(3,2)
			   (0,1)--(3,1)
			   (1,0)--(1,5)
			   (2,0)--(2,5)
			   (3,2)--(3,5)
			   (4,2)--(4,5)
			   (5,2)--(5,5);
\draw[thick, line width=1.2pt, pink] (4,2)--(6,2)--(6,5)--(5,5)--(5,3)--(4,3)--(4,2)--cycle;
\draw[thick, line width=1.2pt] (0,0)--(3,0)--(3,2)--(4,2)--(4,3)--(5,3)--(5,5)--(0,5)--(0,0)--cycle;
\node[] at (4.5, 2.5) {$\color{blue} p_2$};
\node[] at (5.5, 4.5) {$\color{red} q_1$};
\end{tikzpicture}~
& ~\begin{tikzpicture}[scale=.4]
\fill[fill=red!15!white] (1,0) rectangle (3,1);
\fill[fill=red!15!white] (2,1) rectangle (3,3);
\fill[fill=red!15!white] (3,2) rectangle (6,3);
\fill[fill=red!15!white] (5,3) rectangle (6,5);
\draw[color=gray] (0,0)--(3,0)--(3,2)--(6,2)--(6,5)--(0,5)--(0,0)--cycle;
\draw[color=gray] (0,4)--(6,4)
			   (0,3)--(6,3)
			   (0,2)--(3,2)
			   (0,1)--(3,1)
			   (1,0)--(1,5)
			   (2,0)--(2,5)
			   (3,2)--(3,5)
			   (4,2)--(4,5)
			   (5,2)--(5,5);
\draw[thick, line width=1.2pt, pink] (1,0)--(3,0)--(3,2)--(6,2)--(6,5)--(5,5)--(5,3)--(2,3)--(2,1)--(1,1)--(1,0)--cycle;
\draw[thick, line width=1.2pt] (0,0)--(1,0)--(1,1)--(2,1)--(2,3)--(5,3)--(5,5)--(0,5)--(0,0)--cycle;
\node[] at (1.5, .5) {$\color{blue} p_3$};
\node[] at (5.5, 4.5) {$\color{red} q_1$};
\end{tikzpicture}~\\\\
~\begin{tikzpicture}[scale=.4]
\fill[fill=red!15!white] (0,0) rectangle (3,1);
\fill[fill=red!15!white] (2,1) rectangle (3,3);
\fill[fill=red!15!white] (3,2) rectangle (6,3);
\fill[fill=red!15!white] (5,3) rectangle (6,4);
\draw[color=gray] (0,0)--(3,0)--(3,2)--(6,2)--(6,5)--(0,5)--(0,0)--cycle;
\draw[color=gray] (0,4)--(6,4)
			   (0,3)--(6,3)
			   (0,2)--(3,2)
			   (0,1)--(3,1)
			   (1,0)--(1,5)
			   (2,0)--(2,5)
			   (3,2)--(3,5)
			   (4,2)--(4,5)
			   (5,2)--(5,5);
\draw[thick, line width=1.2pt, pink] (0,0)--(3,0)--(3,2)--(6,2)--(6,4)--(5,4)--(5,3)--(2,3)--(2,1)--(0,1)--(0,0)--cycle;
\draw[thick, line width=1.2pt] (0,1)--(2,1)--(2,3)--(5,3)--(5,4)--(6,4)--(6,5)--(0,5)--(0,1)--cycle;
\node[] at (.5, .5) {$\color{blue} p_1$};
\node[] at (5.5, 3.5) {$\color{red} q_2$};
\end{tikzpicture}~
&~ \begin{tikzpicture}[scale=.4]
\fill[fill=red!15!white] (4,2) rectangle (6,3);
\fill[fill=red!15!white] (5,3) rectangle (6,4);
\draw[color=gray] (0,0)--(3,0)--(3,2)--(6,2)--(6,5)--(0,5)--(0,0)--cycle;
\draw[color=gray] (0,4)--(6,4)
			   (0,3)--(6,3)
			   (0,2)--(3,2)
			   (0,1)--(3,1)
			   (1,0)--(1,5)
			   (2,0)--(2,5)
			   (3,2)--(3,5)
			   (4,2)--(4,5)
			   (5,2)--(5,5);
\draw[thick, line width=1.2pt, pink] (4,2)--(6,2)--(6,4)--(5,4)--(5,3)--(4,3)--(4,2)--cycle;
\draw[thick, line width=1.2pt] (0,0)--(3,0)--(3,2)--(4,2)--(4,3)--(5,3)--(5,4)--(6,4)--(6,5)--(0,5)--(0,0)--cycle;
\node[] at (4.5, 2.5) {$\color{blue} p_2$};
\node[] at (5.5, 3.5) {$\color{red} q_2$};
\end{tikzpicture}~
&~ \begin{tikzpicture}[scale=.4]
\fill[fill=red!15!white] (1,0) rectangle (3,1);
\fill[fill=red!15!white] (2,1) rectangle (3,3);
\fill[fill=red!15!white] (3,2) rectangle (6,3);
\fill[fill=red!15!white] (5,3) rectangle (6,4);
\draw[color=gray] (0,0)--(3,0)--(3,2)--(6,2)--(6,5)--(0,5)--(0,0)--cycle;
\draw[color=gray] (0,4)--(6,4)
			   (0,3)--(6,3)
			   (0,2)--(3,2)
			   (0,1)--(3,1)
			   (1,0)--(1,5)
			   (2,0)--(2,5)
			   (3,2)--(3,5)
			   (4,2)--(4,5)
			   (5,2)--(5,5);
\draw[thick, line width=1.2pt, pink] (1,0)--(3,0)--(3,2)--(6,2)--(6,4)--(5,4)--(5,3)--(2,3)--(2,1)--(1,1)--(1,0)--cycle;
\draw[thick, line width=1.2pt] (0,0)--(1,0)--(1,1)--(2,1)--(2,3)--(5,3)--(5,4)--(6,4)--(6,5)--(0,5)--(0,0)--cycle;
\node[] at (1.5, .5) {$\color{blue} p_3$};
\node[] at (5.5, 3.5) {$\color{red} q_2$};
\end{tikzpicture}~\\\\
~\begin{tikzpicture}[scale=.4]
\fill[fill=red!15!white] (0,0) rectangle (3,1);
\draw[color=gray] (0,0)--(3,0)--(3,2)--(6,2)--(6,5)--(0,5)--(0,0)--cycle;
\draw[color=gray] (0,4)--(6,4)
			   (0,3)--(6,3)
			   (0,2)--(3,2)
			   (0,1)--(3,1)
			   (1,0)--(1,5)
			   (2,0)--(2,5)
			   (3,2)--(3,5)
			   (4,2)--(4,5)
			   (5,2)--(5,5);
\draw[thick, line width=1.2pt, pink] (0,0)--(3,0)--(3,1)--(0,1)--(0,0)--cycle;
\draw[thick, line width=1.2pt] (0,1)--(3,1)--(3,2)--(6,2)--(6,5)--(0,5)--(0,1)--cycle;
\node[] at (.5, .5) {$\color{blue} p_1$};
\node[] at (2.5, .5) {$\color{red} q_3$};
\end{tikzpicture}~
&~ \begin{tikzpicture}[scale=.4]
\fill[fill=cyan!15!white] (3,0) rectangle (4,2);
\fill[fill=cyan!15!white] (4,1) rectangle (5,2);
\draw[color=gray] (0,0)--(3,0)--(3,2)--(6,2)--(6,5)--(0,5)--(0,0)--cycle;
\draw[color=gray] (0,4)--(6,4)
			   (0,3)--(6,3)
			   (0,2)--(3,2)
			   (0,1)--(4,1)
			   (1,0)--(1,5)
			   (2,0)--(2,5)
			   (3,2)--(3,5)
			   (4,1)--(4,5)
			   (5,2)--(5,5);
\draw[thick, line width=1.2pt, cyan] (3,0)--(3,2)--(5,2)--(5,1)--(4,1)--(4,0)--(3,0)--cycle;
\draw[thick, line width=1.2pt] (0,0)--(4,0)--(4,1)--(5,1)--(5,2)--(6,2)--(6,5)--(0,5)--(0,0)--cycle;
\node[] at (4.5, 2.5) {$\color{blue} p_2$};
\node[] at (2.5, .5) {$\color{red} q_3$};
\node[] at (4, 1.5) {\tiny\color{blue}added};
\node[] at (-.75, 2.5) {$ -$};
\end{tikzpicture}~
& ~\begin{tikzpicture}[scale=.4]
\fill[fill=red!15!white] (1,0) rectangle (3,1);
\draw[color=gray] (0,0)--(3,0)--(3,2)--(6,2)--(6,5)--(0,5)--(0,0)--cycle;
\draw[color=gray] (0,4)--(6,4)
			   (0,3)--(6,3)
			   (0,2)--(3,2)
			   (0,1)--(3,1)
			   (1,0)--(1,5)
			   (2,0)--(2,5)
			   (3,2)--(3,5)
			   (4,2)--(4,5)
			   (5,2)--(5,5);
\draw[thick, line width=1.2pt, pink] (1,0)--(3,0)--(3,1)--(1,1)--(1,0)--cycle;
\draw[thick, line width=1.2pt] (0,0)--(1,0)--(1,1)--(3,1)--(3,2)--(6,2)--(6,5)--(0,5)--(0,0)--cycle;
\node[] at (1.5, .5) {$\color{blue} p_3$};
\node[] at (2.5, .5) {$\color{red} q_3$};
\end{tikzpicture}~
 \end{bmatrix}
\]
\caption{An illustrate of the determinant $\det \left( (-1)^{\chi(p_j>q_i)}
    \ts_{\lambda(q_i,p_j-1)}\right)_{i,j=1}^k$. The $(i,j)$-entry shows the sign
  $(-1)^{\chi(p_j>q_i)}$ and the partition $\lambda(q_i,p_j-1)$, which is
  obtained from $\lambda$ by either removing (red) or adding (blue) a border
  strip.}
    \label{fig:det2}
  \end{figure}
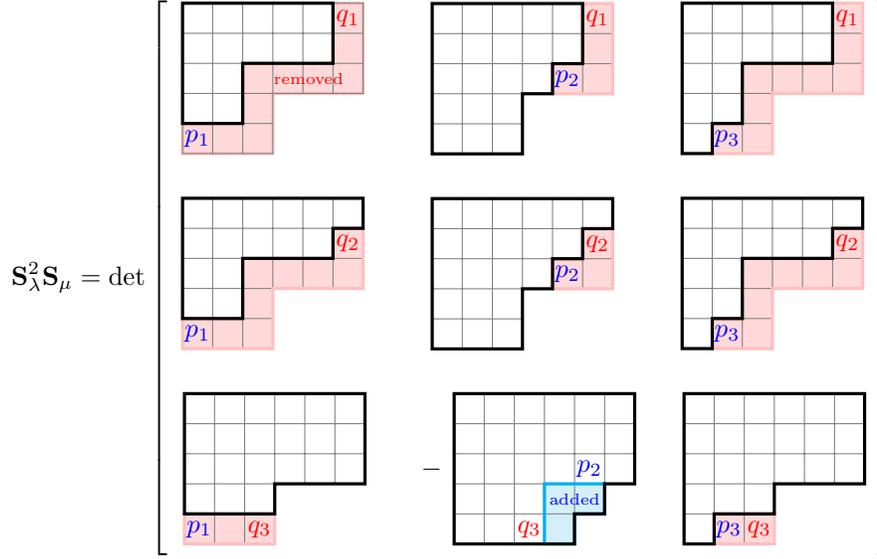
\end{exam}

We now show that Corollary~\ref{cor:main_LP} implies (a corrected version of) a
conjecture of Morales, Pak, and Panova \cite{MPP2}.

Macdonald's 6th variation $s_\lambda(\vec x|\vec a)$ of Schur functions
\cite{Macdonald_Schur}, also known as \emph{factorial Schur functions}, are
defined by
\[
  s_\lambda(\vec x|\vec a) = \frac{\det \left(
      (x_i-a_1)(x_i-a_2)\cdots(x_i-a_{\lambda_j+d-j})\right)_{i,j=1}^d}{\prod_{1\le
      i<j\le d}(x_i-x_j)},
\]
where $\vec x = (x_1,\dots,x_d)$ is a sequence of variables and $\vec a =
(a_1,a_2,\dots)$ is a sequence of parameters. Note that $s_\lambda(\vec x|\vec
a)$ is a symmetric polynomial in the variables $\vec x$ with parameters $\vec
a$. If $a_i=0$ for all $i$, then $s_\lambda(\vec x|\vec a)$ becomes the Schur
polynomial $s_\lambda(\vec x)$. The factorial Schur functions $s_\lambda(\vec
x|\vec a)$ are a special case of Macdonald's 9th variation $\ts_\lambda$ of
Schur functions.

By specializing the Macdonald's 9th variation to the factorial Schur functions
in Corollary~\ref{cor:main_LP}, we obtain the following result, which is a
corrected version of a conjecture proposed by Morales, Pak, and Panova
\cite{MPP2}.

\begin{cor}
  \label{cor:mpp}
  Let $\theta=(\theta_1,\dots,\theta_k)$ be the Lascoux--Pragacz decomposition
  of a skew shape $\lm$ with $\ell(\lambda)\le d$. Then
\begin{equation}\label{eq:mpp2}
  s_\mu(\vec x|\vec a) s_\lambda(\vec x|\vec a)^{k-1} =
  \det\left((-1)^{\chi(p_j>q_i)} s_{\lambda(q_i,p_j-1)}(\vec x|\vec a)
  \right)_{i,j=1}^k,
\end{equation}
where $\vec x = (x_1,\dots,x_d)$ and $\vec a = (a_1,a_2,\dots)$.
\end{cor}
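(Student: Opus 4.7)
The plan is to deduce Corollary \ref{cor:mpp} directly from Corollary \ref{cor:main_LP} by an appropriate specialization of Macdonald's 9th variation. The only additional input needed is the Jacobi--Trudi formula for factorial Schur functions (due to Macdonald): writing $\tau$ for the shift operator $\tau(a_1, a_2, \dots) = (a_2, a_3, \dots)$ and $h_r(\vec x \mid \vec b)$ for the factorial complete homogeneous symmetric polynomial associated to the parameter sequence $\vec b$, one has
\[
  s_\rho(\vec x \mid \vec a) = \det\left(h_{\rho_i - i + j}(\vec x \mid \tau^{j-1}\vec a)\right)_{i,j=1}^d
\]
for every partition $\rho$ with $\ell(\rho) \le d$.

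First I would set $n = d$ and specialize the indeterminates of Macdonald's 9th variation by
\[
  h_{r,s} := h_r(\vec x \mid \tau^{-s}\vec a).
\]
Taking $\mu = \emptyset$ in the definition of $\ts_{\lm}$, the $(i,j)$-entry becomes $h_{\rho_i - i + j,\, 1-j} = h_{\rho_i - i + j}(\vec x \mid \tau^{j-1}\vec a)$, matching the Jacobi--Trudi formula above; the boundary conventions $h_{0,s} = 1$ and $h_{r,s} = 0$ for $r < 0$ are automatic from the definition of the factorial $h_r$. Hence under this specialization $\ts_\rho = s_\rho(\vec x \mid \vec a)$ for every partition $\rho$ with $\ell(\rho) \le d$.

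Next I would check that every partition $\lambda(q_i, p_j - 1)$ appearing on the right-hand side of Corollary \ref{cor:main_LP} lies in $\Par_d$, so that the above specialization is applicable to each entry. This is immediate from the definition of $\lambda(a,b)$ via the sets $C_n(\cdot)$ together with Lemma \ref{lem:C/C}: each $q_i$ belongs to $C_d(\lambda)$ and each $p_j - 1$ belongs to $C_d(\mu)$, so the operation $\lambda \mapsto \lambda(q_i, p_j - 1)$ preserves membership in $\Par_d$. Combining these two observations, substituting the specialization into the identity in Corollary \ref{cor:main_LP} yields \eqref{eq:mpp2} verbatim. No serious obstacle is expected; the argument is just careful bookkeeping of how Macdonald's 9th variation reduces to the factorial Schur case, together with an appeal to the classical Jacobi--Trudi formula for factorial Schur functions.
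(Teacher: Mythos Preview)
Your proposal is correct and follows exactly the approach the paper takes: the paper simply states that Corollary~\ref{cor:mpp} is obtained from Corollary~\ref{cor:main_LP} by specializing Macdonald's 9th variation to factorial Schur functions, and you have supplied the (standard) details of that specialization via the factorial Jacobi--Trudi formula. Your verification that each $\lambda(q_i,p_j-1)$ lies in $\Par_d$ is a nice extra check that the paper leaves implicit.
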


\begin{remark}
  The original conjecture of Morales, Pak, and Panova
  \cite[Conjecture~7.9]{MPP2} states that, under the same assumption in
  Corollary~\ref{cor:mpp},
  \begin{equation}\label{eq:mpp}
  s_\mu(\vec x|\vec a) s_\lambda(\vec x|\vec a)^{k-1} =
  \det\left(s_{\lambda\setminus \lambda^0[p_j,q_i]}(\vec x|\vec a)
  \right)_{i,j=1}^k.
\end{equation}  
In fact \eqref{eq:mpp} is not true even for the Schur function case. As a
counterexample, if $\lambda=(2,1)$, $\mu=(1)$, $\vec x = (x_1,x_2)$, and $\vec
a=(0,0,\dots)$, then the Lascoux--Pragacz decomposition
$\theta=(\theta_1,\theta_2)$ has two border strips with $p_1=q_1=1$ and
$p_2=q_2=-1$. One can easily check using Pieri's rule that the left hand side of
\eqref{eq:mpp} is
\begin{equation}\label{eq:ce}
  s_\mu(\vec x) s_\lambda(\vec x) = s_{(3,1)}(\vec x) + s_{(2,2)}(\vec x)
  +s_{(2,1,1)}(\vec x) = s_{(3,1)}(\vec x) + s_{(2,2)}(\vec x).
\end{equation}
However, the right hand side of \eqref{eq:mpp} is
\begin{align*}
  \det\left(s_{\lambda\setminus \lambda^0[p_j,q_i]}(\vec x) \right)_{i,j=1}^2
  &= \det
\begin{pmatrix}
 s_{\lambda\setminus \lambda^0[1,1]}(\vec x)  & s_{\lambda\setminus \lambda^0[-1,1]}(\vec x) \\
 s_{\lambda\setminus \lambda^0[1,-1]}(\vec x)  & s_{\lambda\setminus \lambda^0[-1,-1]}(\vec x) \\
\end{pmatrix}\\
&= \det
\begin{pmatrix}
 s_{(1,1)}(\vec x)  & s_{\emptyset}(\vec x) \\
 0  & s_{(2)}(\vec x) \\
\end{pmatrix}\\
 &=  s_{(3,1)}(\vec x) + s_{(2,1,1)}(\vec x) = s_{(3,1)}(\vec x).
\end{align*}

Note that the right hand side of \eqref{eq:mpp2} for the running example is
\begin{align*}
\det\left((-1)^{\chi(p_j>q_i)} s_{\lambda(q_i,p_j-1)}(\vec x) \right)_{i,j=1}^2
  &= \det
\begin{pmatrix}
 s_{(1,1)}(\vec x)  & s_{\emptyset}(\vec x) \\
 -s_{(2,2)}(\vec x)  & s_{(2)}(\vec x) \\
\end{pmatrix}\\
&= s_{(3,1)}(\vec x) + s_{(2,1,1)}(\vec x) + s_{(2,2)}(\vec x)= s_{(3,1)}(\vec x)
+s_{(2,2)}(\vec x),
\end{align*}
which is equal to $s_\mu(\vec x) s_\lambda(\vec x)$ as shown in \eqref{eq:ce}.
\end{remark}

\section{A generalized Hamel--Goulden formula}
\label{sec:gener-hamel-gould}

In this section we give a generalization of the Hamel--Goulden formula. Our
result involves generalizations of cutting strips as well as Schur functions. As corollaries we
obtain Jin's result \cite{Jin_2018} and a generalized Giambelli formula.

We first introduce some definitions.

\begin{defn}\label{defn:compatible}
  Let $\lambda$ and $\nu$ be partitions with $\lambda\subseteq\nu$. A border
  strip $\gamma$ is \emph{$\nl$-compatible} if the following condition holds:
  \begin{itemize}
  \item $\Cont(\nl)\subseteq \Cont(\lambda)$, and 
  \item for any connected component $\alpha$ of $\nl$, we have
\[
\lambda^0[a-1,b+1] = \gamma[a-1,b+1],
\]
where $a=\min(\Cont(\alpha))$ and $b=\max(\Cont(\alpha))$.
  \end{itemize}
See Figure~\ref{fig:compatible}.
\end{defn}

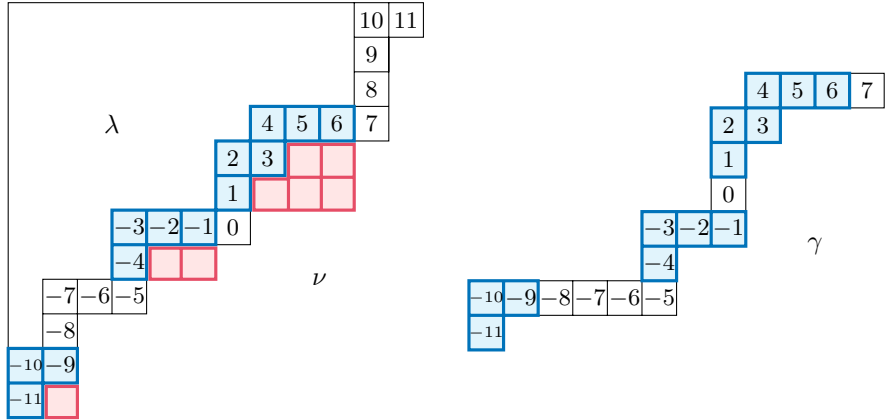
\begin{figure}
  \centering
\begin{tikzpicture}[scale=.46]
\fill[fill=cyan!10!white] (0,2) rectangle (1,4)
				(1,3) rectangle (2,4)
				(3,6)--(4,6)--(4,7)--(6,7)--(6,8)--(3,8)--(3,6)
				(6,8)--(7,8)--(7,9)--(8,9)--(8,10)--(10,10)--(10,11)--(7,11)--(7,10)--(6,10)--(6,8);
\draw (0,2)--(1,2)--(1,3)--(2,3)--(2,5)--(4,5)--(4,7)--(7,7)--(7,9)--(8,9)--(8,10)--(11,10)--(11,13)--(12,13)--(12,14)--(0,14)--(0,2)--cycle
	(1,4)--(1,6)--(3,6)--(3,5)
	(1,5)--(2,5)--(2,6)
	(11,11)--(10,11)--(10,14)
	(10,13)--(11,13)--(11,14)
	(10,12)--(11,12)--(11,13);
\draw[very thick, color=frenchblue] (0,2)--(1,2)--(1,3)--(2,3)--(2,4)--(0,4)--(0,2)--cycle
			(0,3)--(1,3)--(1,4)
			(3,6)--(4,6)--(4,7)--(6,7)--(6,8)--(3,8)--(3,6)--cycle
			(3,7)--(4,7)--(4,8)
			(5,7)--(5,8)
			(6,8)--(7,8)--(7,9)--(8,9)--(8,10)--(10,10)--(10,11)--(7,11)--(7,10)--(6,10)--(6,8)--cycle
			(6,9)--(7,9)--(7,10)--(8,10)--(8,11)
			(9,10)--(9,11);
\fill [fill=red!10!white] (1.1, 2)--(1.1, 2.9)--(2, 2.9)--(2,2)--(1.1,2)--cycle
			(4.1, 6)--(6,6)--(6,6.9)--(4.1, 6.9)--(4.1, 6)--cycle
			(7.1, 8)--(7.1, 8.9)--(8.1, 8.9)--(8.1, 9.9)--(10, 9.9)--(10, 8)--(7.1, 8)--cycle;
\draw [very thick, color=dredcolor] (1.1, 2)--(1.1, 2.9)--(2, 2.9)--(2,2)--(1.1,2)--cycle
			(4.1, 6)--(6,6)--(6,6.9)--(4.1, 6.9)--(4.1, 6)--cycle
			(5,6)--(5,6.9)
			(8.1,8.95)--(10,8.95)
			(8.1, 8)--(8.1,8.95)
			(9.05, 8)--(9.05, 9.9)
			(7.1, 8)--(7.1, 8.9)--(8.1, 8.9)--(8.1, 9.9)--(10, 9.9)--(10, 8)--(7.1, 8)--cycle;
\node[] at (3, 10.5) {$\lambda$};
\node[] at (9,6) {$\nu$};
\node[] at (.5, 2.5) {\tiny $-11$};
\node[] at (.5, 3.5) {\tiny $-10$};
\node[] at (1.5, 3.5) {\small $-9$};
\node[] at (1.5, 4.5) {\small $-8$};
\node[] at (1.5, 5.5) {\small $-7$};
\node[] at (2.5, 5.5) {\small $-6$};
\node[] at (3.5, 5.5) {\small $-5$};
\node[] at (3.5, 6.5) {\small $-4$};
\node[] at (3.5, 7.5) {\small $-3$};
\node[] at (4.5, 7.5) {\small $-2$};
\node[] at (5.5, 7.5) {\small $-1$};
\node[] at (6.5, 7.5) {\small $0$};
\node[] at (6.5, 8.5) {\small $1$};
\node[] at (6.5, 9.5) {\small $2$};
\node[] at (7.5, 9.5) {\small $3$};
\node[] at (7.5, 10.5) {\small $4$};
\node[] at (8.5, 10.5) {\small $5$};
\node[] at (9.5, 10.5) {\small $6$};
\node[] at (10.5, 10.5) {\small $7$};
\node[] at (10.5, 11.5) {\small $8$};
\node[] at (10.5, 12.5) {\small $9$};
\node[] at (10.5, 13.5) {\small $10$};
\node[] at (11.5, 13.5) {\small $11$};
\end{tikzpicture}\quad
\begin{tikzpicture}[scale=.46]
\fill[fill=cyan!10!white]  (0,0)--(1,0)--(1,1)--(2,1)--(2,2)--(0,2)--(0,0)--cycle
			(5,2)--(6,2)--(6,3)--(8,3)--(8,4)--(5,4)--(5,2)--cycle
			(7,5)--(8,5)--(8,6)--(9,6)--(9,7)--(11,7)--(11,8)--(8,8)--(8,7)--(7,7)--(7,5)--cycle;
\draw (2,1)--(6,1)--(6,2)--(2,2)--(2,1)--cycle
	(3,1)--(3,2)
	(4,1)--(4,2)
	(5,1)--(5,2)
	(7,4)--(7,5)
	(8,4)--(8,5)
	(11,7)--(12,7)--(12,8)--(11,8);
\draw[very thick, color=frenchblue] (0,0)--(1,0)--(1,1)--(2,1)--(2,2)--(0,2)--(0,0)--cycle
			(0,1)--(1,1)--(1,2)
			(5,2)--(6,2)--(6,3)--(8,3)--(8,4)--(5,4)--(5,2)--cycle
			(5,3)--(6,3)--(6,4)
			(7,3)--(7,4)
			(7,5)--(8,5)--(8,6)--(9,6)--(9,7)--(11,7)--(11,8)--(8,8)--(8,7)--(7,7)--(7,5)--cycle
			(7,6)--(8,6)--(8,7)--(9,7)--(9,8)
			(10,7)--(10,8); 
\node[] at (.5, .5) {\tiny $-11$};
\node[] at (.5, 1.5) {\tiny $-10$};
\node[] at (1.5, 1.5) {\small $-9$};
\node[] at (2.5, 1.5) {\small $-8$};
\node[] at (3.5, 1.5) {\small $-7$};
\node[] at (4.5, 1.5) {\small $-6$};
\node[] at (5.5, 1.5) {\small $-5$};
\node[] at (5.5, 2.5) {\small $-4$};
\node[] at (5.5, 3.5) {\small $-3$};
\node[] at (6.5, 3.5) {\small $-2$};
\node[] at (7.5, 3.5) {\small $-1$};
\node[] at (7.5, 4.5) {\small $0$};
\node[] at (7.5, 5.5) {\small $1$};
\node[] at (7.5, 6.5) {\small $2$};
\node[] at (8.5, 6.5) {\small $3$};
\node[] at (8.5, 7.5) {\small $4$};
\node[] at (9.5, 7.5) {\small $5$};
\node[] at (10.5, 7.5) {\small $6$};
\node[] at (11.5, 7.5) {\small $7$};
\node[] at (10, 3) {$\gamma$};
\node[] at (0,-1.5) {\phantom{1}};
\end{tikzpicture}
\caption{A skew shape $\nl$ and the outer strip $\lambda^0$ with contents are
  shown on the left, where the cells of $\nl$ are colored red. The border strip
  $\gamma$ on the right is $\nl$-compatible. }
  \label{fig:compatible}
\end{figure}

\begin{defn}\label{defn:comp2}
  Let $\lambda$ and $\nu$ be partitions with $\lambda\subseteq\nu$. A partition
  $\mu\subseteq\lambda$ is \emph{$\nl$-compatible} if the following condition
  holds: 
\begin{itemize}
  \item $\Cont(\nl)\subseteq \Cont(\lambda)$, and 
  \item for any connected component $\alpha$ of $\nl$, we have
\[
\lambda^0[a,b] = \mu^+[a,b],
\]
where $a=\min(\Cont(\alpha))$ and $b=\max(\Cont(\alpha))$.
\end{itemize}
See Figure~\ref{fig:compatible2}.
\end{defn}

In Definitions~\ref{defn:compatible} and \ref{defn:comp2} one can use $\lambda^+$
instead of $\lambda^0$; if $\alpha$ is a connected component of $\nl$, then
since $\Cont(\alpha)\subseteq\Cont(\lambda)$, we have
\[
  \lambda^0[a-1,b+1] = \lambda^+[a-1,b+1],
\]
where $a=\min(\Cont(\alpha))$ and $b=\max(\Cont(\alpha))$.

\begin{figure}
  \centering
  \begin{tikzpicture}[scale=.43]
\fill[fill=cyan!10!white] (0,1) rectangle (1,2)
			(2,6) rectangle ( 4,7)
			(4,9) rectangle (6,10)
			(5, 10) rectangle (7,11);
\draw (0,0)--(1,0)--(1,1)--(2,1)--(2,3)--(4,3)--(4,5)--(7,5)--(7,7)--(8,7)--(8,8)--(11,8)--(11,11)--(12,11)--(12,12)--(0,12)--(0,0)--cycle;
\draw [line width= 1.5pt, color=oxfordblue] (0,4)--(1,4)--(1,5)--(2,5)--(2,7)--(3,7)--(3,10)--(5,10)--(5,11)--(8,11)--(8,12)--(0,12)--(0,4)--cycle;
\draw[very thick, color=frenchblue] (0,1)--(1,1)--(1,2)--(0,2)--(0,1)--cycle
			(2,6)--(4,6)--(4,7)--(2,7)--(2,6)--cycle
			(3,6)--(3,7)
			(4,9)--(6,9)--(6,10)--(7,10)--(7,11)--(5,11)--(5,10)--(4,10)--(4,9)--cycle
			(5,9)--(5,10)--(6,10)--(6,11);
\fill [fill=red!10!white] (1.1,0)--(2,0)--(2,.9)--(1.1,.9)--(1.1,0)--cycle
			(4.1,4)--(6,4)--(6,4.9)--(4.1,4.9)--(4.1,4)--cycle
			(7.1,6)--(10,6)--(10,7.9)--(8.1,7.9)--(8.1,6.9)--(7.1,6.9)--(7.1,6)--cycle;
\draw [line width=1.3pt, color=dredcolor] (1.1,0)--(2,0)--(2,.9)--(1.1,.9)--(1.1,0)--cycle
			(4.1,4)--(6,4)--(6,4.9)--(4.1,4.9)--(4.1,4)--cycle
			(7.1,6)--(10,6)--(10,7.9)--(8.1,7.9)--(8.1,6.9)--(7.1,6.9)--(7.1,6)--cycle
			(5,4)--(5,4.9)
			(8.1, 6.9)--(10, 6.9)
			(8.1, 6)--(8.1, 7.1)
			(9.05,6)--(9.05,7.9);
\draw[thick, dashed, color=orange] (.5, 1.5)--(1.5, .5)
			(2.5, 6.5)--(4.5, 4.5)
			(3.5, 6.5)--(5.5, 4.5)
			(6.5, 10.5)--(9.5, 7.5)
			(4.5, 9.5)--(7.5, 6.5);
\filldraw[color=orange] (.5, 1.5) circle (3pt)
			(1.5, .5) circle (3pt)
			(2.5, 6.5) circle (3pt)
			(4.5, 4.5) circle (3pt)
			(3.5, 6.5) circle (3pt)
			(5.5, 4.5) circle (3pt)
			(4.5, 9.5) circle (3pt)
			(7.5, 6.5) circle (3pt)
			(6.5, 10.5) circle (3pt)
			(9.5, 7.5) circle (3pt);
\node[] at (1.5,9.6) {$\mu$};
\end{tikzpicture}
\caption{A $\nl$-compatible partition $\mu$ for the skew shape $\nl$ in
  Figure~\ref{fig:compatible}.}
  \label{fig:compatible2}
\end{figure}
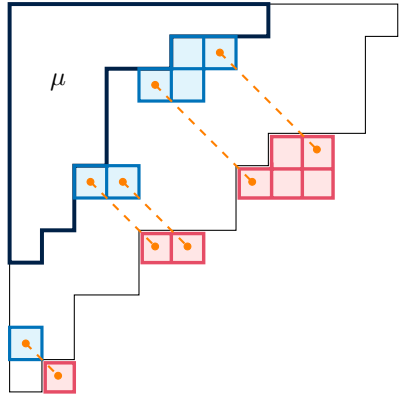

Note that if $\nu=\lambda$, then by definition every border strip $\gamma$ and
every partition $\mu\subseteq\lambda$ are $\nu/\lambda$-compatible.

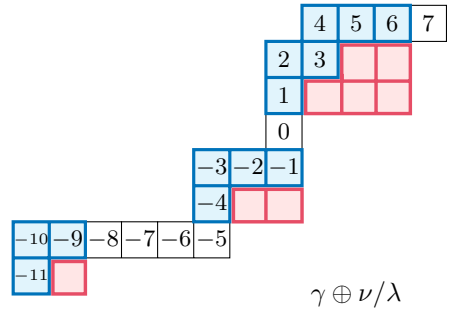
\begin{figure}
  \centering
  \begin{tikzpicture}[scale=.48]
\fill[fill=cyan!10!white]  (0,0)--(1,0)--(1,1)--(2,1)--(2,2)--(0,2)--(0,0)--cycle
			(5,2)--(6,2)--(6,3)--(8,3)--(8,4)--(5,4)--(5,2)--cycle
			(7,5)--(8,5)--(8,6)--(9,6)--(9,7)--(11,7)--(11,8)--(8,8)--(8,7)--(7,7)--(7,5)--cycle;
\draw (2,1)--(6,1)--(6,2)--(2,2)--(2,1)--cycle
	(3,1)--(3,2)
	(4,1)--(4,2)
	(5,1)--(5,2)
	(7,4)--(7,5)
	(8,4)--(8,5)
	(11,7)--(12,7)--(12,8)--(11,8);
\draw[very thick, color=frenchblue] (0,0)--(1,0)--(1,1)--(2,1)--(2,2)--(0,2)--(0,0)--cycle
			(0,1)--(1,1)--(1,2)
			(5,2)--(6,2)--(6,3)--(8,3)--(8,4)--(5,4)--(5,2)--cycle
			(5,3)--(6,3)--(6,4)
			(7,3)--(7,4)
			(7,5)--(8,5)--(8,6)--(9,6)--(9,7)--(11,7)--(11,8)--(8,8)--(8,7)--(7,7)--(7,5)--cycle
			(7,6)--(8,6)--(8,7)--(9,7)--(9,8)
			(10,7)--(10,8);
\fill [fill=red!10!white] (1.1, 0)--(1.1, .9)--(2, .9)--(2,0)--(1.1,0)--cycle
                         (6.1, 2)--(8,2)--(8,2.9)--(6.1, 2.9)--(6.1, 2)--cycle
			(8.1, 5)--(8.1, 5.9)--(9.1, 5.9)--(9.1, 6.9)--(11, 6.9)--(11, 5)--(8.1, 5)--cycle;
\draw [line width=1.3pt, color=dredcolor] (1.1,0)--(2,0)--(2,.9)--(1.1,.9)--(1.1,0)--cycle
			(6.1, 2)--(8,2)--(8,2.9)--(6.1, 2.9)--(6.1, 2)--cycle
			(8.1, 5)--(8.1, 5.9)--(9.1, 5.9)--(9.1, 6.9)--(11, 6.9)--(11, 5)--(8.1, 5)--cycle
			(7,2)--(7,2.9)
			(9.1, 5.9)--(11, 5.9)
			(9.1, 5)--(9.1, 6.1)
			(10.05,5)--(10.05,6.9);
\node[] at (.5, .5) {\tiny $-11$};
\node[] at (.5, 1.5) {\tiny $-10$};
\node[] at (1.5, 1.5) {\small $-9$};
\node[] at (2.5, 1.5) {\small $-8$};
\node[] at (3.5, 1.5) {\small $-7$};
\node[] at (4.5, 1.5) {\small $-6$};
\node[] at (5.5, 1.5) {\small $-5$};
\node[] at (5.5, 2.5) {\small $-4$};
\node[] at (5.5, 3.5) {\small $-3$};
\node[] at (6.5, 3.5) {\small $-2$};
\node[] at (7.5, 3.5) {\small $-1$};
\node[] at (7.5, 4.5) {\small $0$};
\node[] at (7.5, 5.5) {\small $1$};
\node[] at (7.5, 6.5) {\small $2$};
\node[] at (8.5, 6.5) {\small $3$};
\node[] at (8.5, 7.5) {\small $4$};
\node[] at (9.5, 7.5) {\small $5$};
\node[] at (10.5, 7.5) {\small $6$};
\node[] at (11.5, 7.5) {\small $7$};
\node[] at (9.5, 0) {$\gamma\oplus\nl$};
\end{tikzpicture}
\caption{The skew shape $\gamma\oplus \nl$ for the border strip $\gamma$ and the
  skew shape $\nl$ in Figure~\ref{fig:compatible}. The contents of the cells in
  $\gamma$ are shown and the cells coming from $\nl$ are colored red.}
  \label{fig:gamma+nl}
\end{figure}

Recall that we have defined $\gamma[a,b]=\{x\in \gamma: c(x)\in [a,b]\}$ for a
border strip $\gamma$. We extend this definition to any connected skew shape
$\alpha$, that is, 
\[
\alpha[a,b]=\{x\in \alpha: c(x)\in [a,b]\}.
\]

\begin{defn}
  Suppose that $\gamma$ is a $\nl$-compatible border strip. Define $\gamma\oplus
  \nl$ to be the skew shape obtained from $\gamma$ by gluing each connected
  component $\alpha$ of $\nl$ below $\gamma$ after shifting $\alpha$ diagonally
  so that the southeast boundary of $\gamma$ and the northwest boundary of
  $\alpha$ have common edges. See Figure~\ref{fig:gamma+nl}.
\end{defn}

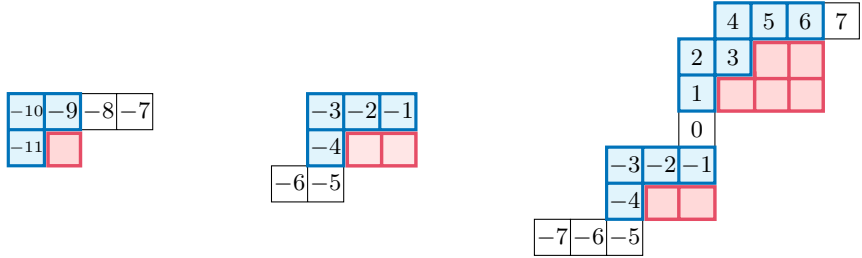
\begin{figure}
  \centering
    \begin{tikzpicture}[scale=.48]
    \draw (2,1)--(4,1)--(4,2)--(2,2)
    		(3,1)--(3,2);
    \fill[fill=cyan!10!white]  (0,0)--(1,0)--(1,1)--(2,1)--(2,2)--(0,2)--(0,0)--cycle;
    \draw[very thick, color=frenchblue] (0,0)--(1,0)--(1,1)--(2,1)--(2,2)--(0,2)--(0,0)--cycle
    			(0,1)--(1,1)--(1,2);
  \fill [fill=red!15!white] (1.1, 0)--(1.1, .9)--(2, .9)--(2,0)--(1.1,0)--cycle; 
    \draw [line width=1.3pt, color=dredcolor] (1.1, 0)--(1.1, .9)--(2, .9)--(2,0)--(1.1,0)--cycle;
   \node[] at (.5, .5) {\tiny $-11$};
\node[] at (.5, 1.5) {\tiny $-10$};
\node[] at (1.5, 1.5) {\small $-9$};
\node[] at (2.5, 1.5) {\small $-8$};
\node[] at (3.5, 1.5) {\small $-7$};
\node[] at (3, -2) {\phantom{d}};
    \end{tikzpicture}\qquad\qquad
     \begin{tikzpicture}[scale=.48]  
     \draw (5,2)--(4,2)--(4,1)--(6,1)--(6,2)
     		(5,1)--(5,2);
    \fill[fill=cyan!10!white]  (5,2)--(6,2)--(6,3)--(8,3)--(8,4)--(5,4)--(5,2)--cycle; 
      \draw[very thick, color=frenchblue] (5,2)--(6,2)--(6,3)--(8,3)--(8,4)--(5,4)--(5,2)--cycle
			(5,3)--(6,3)--(6,4)
			(7,3)--(7,4);
        \fill[fill=red!10!white] (6.1, 2)--(8,2)--(8,2.9)--(6.1, 2.9)--(6.1, 2)--cycle;
      \draw [line width=1.3pt, color=dredcolor] (6.1, 2)--(8,2)--(8,2.9)--(6.1, 2.9)--(6.1, 2)--cycle
      				(7.05, 2)--(7.05, 2.9);
    \node[] at (4.5, 1.5) {\small $-6$};
\node[] at (5.5, 1.5) {\small $-5$};
\node[] at (5.5, 2.5) {\small $-4$};
\node[] at (5.5, 3.5) {\small $-3$};
\node[] at (6.5, 3.5) {\small $-2$};
\node[] at (7.5, 3.5) {\small $-1$};
\node[] at (6, 0) {\phantom{d}};
     \end{tikzpicture}\qquad\qquad
      \begin{tikzpicture}[scale=.48]  
     \draw (5,2)--(4,2)--(4,1)--(6,1)--(6,2)
     		(5,1)--(5,2)
		(4,2)--(3,2)--(3,1)--(4,1)
		(7,4)--(7,5)
	(8,4)--(8,5)
	(11,7)--(12,7)--(12,8)--(11,8);
    \fill[fill=cyan!10!white]  (5,2)--(6,2)--(6,3)--(8,3)--(8,4)--(5,4)--(5,2)--cycle
    			(7,5)--(8,5)--(8,6)--(9,6)--(9,7)--(11,7)--(11,8)--(8,8)--(8,7)--(7,7)--(7,5)--cycle;
      \draw[very thick, color=frenchblue] (5,2)--(6,2)--(6,3)--(8,3)--(8,4)--(5,4)--(5,2)--cycle
			(5,3)--(6,3)--(6,4)
			(7,3)--(7,4)
			(7,5)--(8,5)--(8,6)--(9,6)--(9,7)--(11,7)--(11,8)--(8,8)--(8,7)--(7,7)--(7,5)--cycle
			(7,6)--(8,6)--(8,7)--(9,7)--(9,8)
			(10,7)--(10,8);
       \fill[fill=red!15!white] (6.1, 2)--(8,2)--(8,2.9)--(6.1, 2.9)--(6.1, 2)--cycle
       				  (8.1, 5)--(8.1, 5.9)--(9.1, 5.9)--(9.1, 6.9)--(11, 6.9)--(11, 5)--(8.1, 5)--cycle;
      \draw [line width=1.3pt, color=dredcolor] (6.1, 2)--(8,2)--(8,2.9)--(6.1, 2.9)--(6.1, 2)--cycle
      (8.1, 5)--(8.1, 5.9)--(9.1, 5.9)--(9.1, 6.9)--(11, 6.9)--(11, 5)--(8.1, 5)--cycle
      (7,2)--(7,2.9)
      (9.1, 5.9)--(11, 5.9)
			(9.1, 5)--(9.1, 6.1)
			(10.05,5)--(10.05,6.9);
   \node[] at (3.5, 1.5) {\small $-7$};
    \node[] at (4.5, 1.5) {\small $-6$};
\node[] at (5.5, 1.5) {\small $-5$};
\node[] at (5.5, 2.5) {\small $-4$};
\node[] at (5.5, 3.5) {\small $-3$};
\node[] at (6.5, 3.5) {\small $-2$};
\node[] at (7.5, 3.5) {\small $-1$};
\node[] at (7.5, 4.5) {\small $0$};
\node[] at (7.5, 5.5) {\small $1$};
\node[] at (7.5, 6.5) {\small $2$};
\node[] at (8.5, 6.5) {\small $3$};
\node[] at (8.5, 7.5) {\small $4$};
\node[] at (9.5, 7.5) {\small $5$};
\node[] at (10.5, 7.5) {\small $6$};
\node[] at (11.5, 7.5) {\small $7$};
     \end{tikzpicture}
  \caption{The diagrams $(\gamma\oplus\nl)[-11,-7]$ (left),
    $(\gamma\oplus\nl)[-6,-1]$ (middle), and $(\gamma\oplus\nl)[-7,7]$
    (right), for the $\gamma\oplus\nl$ in Figure~\ref{fig:gamma+nl}.}
  \label{fig:gamma+nl[a,b]}
\end{figure}

Observe that if $\gamma$ is a $\nl$-compatible border strip and
\begin{equation}
    \label{eq:a,b}
    a,b\in (\Cont(\lambda)\setminus\Cont(\nl))
    \cup\{\min(\Cont(\lambda)), \max(\Cont(\lambda))\},
\end{equation}
then $(\gamma\oplus\nl)[a,b]$ is a connected skew shape. See
Figure~\ref{fig:gamma+nl[a,b]}.

We now state the generalized Hamel--Goulden formula.

\begin{thm}\label{thm:main_HG}
  Let $\mu\subseteq \lambda\subseteq\nu$ be partitions. Suppose that $\mu$ is
  $\nl$-compatible and $\gamma$ is a $\nl$-compatible border strip. Let
  $\theta=(\theta_1,\dots,\theta_k)$ be the decomposition of $\lm$ determined by
  the cutting strip $\gamma$. If $\alpha_1,\dots,\alpha_\ell$ are the connected
  components of $\nl$, we have
\[
\ts_{\nu/\mu} \prod_{s=1}^\ell \ts_{\alpha_s}^{r_s-1} 
    = \det \left( \ts_{(\gamma\oplus\nl)[p_j,q_i]}\right)_{i,j=1}^k,
\]
where $r_s$ is the number of strips $\theta_i$ such that
$\Cont(\alpha_s)\subseteq\Cont(\theta_i)$.
\end{thm}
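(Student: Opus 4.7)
The plan is to apply equation~\eqref{eq:main2} of Theorem~\ref{thm:main} to the triple $(\lambda,\mu,\nu)$ and then rewrite the resulting determinantal identity using the $\nu/\lambda$-compatibility conditions on $\gamma$ and $\mu$.  The approach parallels that of Theorems~\ref{thm:main_LP} and~\ref{thm:main_K}, but with an additional factorization step that produces the product $\prod_s\ts_{\alpha_s}^{r_s-1}$.

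The first step is to extend Lemma~\ref{lem:C/C} from the Lascoux--Pragacz and Kreiman decompositions to an arbitrary cutting-strip decomposition of $\lambda/\mu$.  Namely, for the decomposition $\theta=(\theta_1,\dots,\theta_k)$ determined by $\gamma$, the set of ending contents $\{q_1,\dots,q_k\}$ equals $C_n(\lambda)\setminus C_n(\mu)$ and $\{p_i-1:1\le i\le k\}$ equals $C_n(\mu)\setminus C_n(\lambda)$.  This is a general property of cutting strips, provable by induction on $k$ by peeling off one $\theta_i$ at a time.  With this identification, equation~\eqref{eq:main2}, combined with the permutation-sign bookkeeping from the proof of Theorem~\ref{thm:main_LP}, yields
\[
\ts_{\nu/\lambda}^{k-1}\ts_{\nu/\mu}
  = \det\bigl((-1)^{\chi(p_j>q_i)}\ts_{\nu/\lambda(q_i,p_j-1)}\bigr)_{i,j=1}^{k}.
\]

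The crucial second step is a factorization lemma:  under the compatibility hypotheses, each entry satisfies
\[
(-1)^{\chi(p_j>q_i)}\ts_{\nu/\lambda(q_i,p_j-1)}
  = \ts_{(\gamma\oplus\nu/\lambda)[p_j,q_i]}\cdot
    \prod_{s\,:\,\Cont(\alpha_s)\not\subseteq[p_j,q_i]}\ts_{\alpha_s}.
\]
When $p_j\le q_i$, the shape $\nu/\lambda(q_i,p_j-1)$ equals $\nu/\lambda\cup\lambda^0[p_j,q_i]$, which is the disjoint union of $(\gamma\oplus\nu/\lambda)[p_j,q_i]$ and the components $\alpha_s$ whose contents lie outside $[p_j,q_i]$; the $\nu/\lambda$-compatibility of $\gamma$ ensures $\lambda^0$ and $\gamma$ coincide on the relevant diagonals, and the compatibility of $\mu$ guarantees these pieces sit in mutually disjoint rows and columns, so the determinantal matrix defining $\ts_{\nu/\lambda(q_i,p_j-1)}$ becomes block-triangular and factorizes accordingly.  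The case $p_j>q_i$ requires a parallel argument that also absorbs the sign.  Applying the same factorization to $\nu/\lambda$ itself yields $\ts_{\nu/\lambda}=\prod_s\ts_{\alpha_s}$, which, together with the entry-wise factorization, lets one convert the Bazin identity into the claimed formula.

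The principal obstacle is the final exponent-counting:  the factor $\ts_{\alpha_s}$ enters the $(i,j)$-entry according to the mixed condition ``$p_j>a_s$ or $b_s>q_i$'', which is not of row-or-column form, so it cannot be pulled out of the determinant by a simple scaling.  I plan to handle this by rewriting every entry in the form $\ts_{\nu/\lambda}\cdot\ts_{(\gamma\oplus\nu/\lambda)[p_j,q_i]}/\prod_{s\in C_{ij}}\ts_{\alpha_s}$, where $C_{ij}=\{s:\Cont(\alpha_s)\subseteq[p_j,q_i]\}$ is the set of covered components, then factoring $\ts_{\nu/\lambda}$ out of each row and verifying via the bijection between strips $\theta_i$ covering $\alpha_s$ and rows contributing to $C_{ij}$ that the residual denominators combine to give exactly $\prod_s\ts_{\alpha_s}^{k-r_s}$.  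Matching this against $\ts_{\nu/\lambda}^{k-1}=\prod_s\ts_{\alpha_s}^{k-1}$ on the left-hand side then yields the remaining $\prod_s\ts_{\alpha_s}^{r_s-1}$, completing the proof.
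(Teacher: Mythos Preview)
Your first step is where the argument breaks down. The claimed extension of Lemma~\ref{lem:C/C} to an arbitrary cutting strip is false: the number $k$ of strips in the decomposition depends on $\gamma$, whereas $|C_n(\lambda)\setminus C_n(\mu)|$ does not. For a concrete counterexample take $\lambda=(3,3)$, $\mu=\emptyset$, so that $C_2(\lambda)\setminus C_2(\mu)=\{2,1\}$ has two elements. The Lascoux--Pragacz (or row) cutting strip gives $k=2$, but the column cutting strip gives $k=3$ strips with ending contents $\{0,1,2\}\ne\{1,2\}$. Hence the identification $\{q_1,\dots,q_k\}=C_n(\lambda)\setminus C_n(\mu)$ cannot hold in general, and Theorem~\ref{thm:main} cannot be applied directly to an arbitrary $\gamma$. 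The paper handles this by invoking Chen--Yan--Yang together with Lemma~\ref{lem:combine} to reduce the determinant to the single case $\gamma=\lambda^0$ via matrix operations, and only then applies Theorem~\ref{thm:main_LP} (where Lemma~\ref{lem:C/C} is available).

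Your second obstacle is also misdiagnosed. In the Lascoux--Pragacz case the exponent on $\ts_{\alpha_s}$ is not the indicator of the mixed condition ``$\Cont(\alpha_s)\not\subseteq[p_j,q_i]$'' but rather the sum $\chi(q_i<\min\Cont(\alpha_s))+\chi(p_j>\max\Cont(\alpha_s))$; see Lemma~\ref{lem:LP1}. This sum genuinely separates into a function of $i$ alone plus a function of $j$ alone, so the factors pull out of rows and columns directly, and no division is needed. Your proposed workaround via denominators is therefore unnecessary, and in any case formally problematic since the $\ts_{\alpha_s}$ are polynomials in independent indeterminates $h_{r,s}$ and need not be invertible.
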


 Before proving Theorem~\ref{thm:main_HG} we give an example and some of its
 applications.

\begin{exam} 
  Consider the partitions $\nu=(8,8,8,6,6,5,4)$, $\lambda=(8,8,6,6,6,3,3)$, and
  $\mu=(4,3)$ as shown in Figure~\ref{fig:lmn}. The border strip $\gamma$ in
  Figure~\ref{fig:gt} is $\nu/\lambda$-compatible. The decomposition $\theta$ of
  $\lambda/\mu$ with cutting strip $\gamma$ is also shown in
  Figure~\ref{fig:gt}. Since $C(\alpha_1)\subseteq C(\theta_i)$ for $i=1,2,4$
  and $C(\alpha_2)\subseteq C(\theta_j)$ for $j=1,2$, we have $r_1=3$ and
  $r_2=2$. Thus by Theorem \ref{thm:main_HG}, $\ts_{\nu/\mu} \ts_{\alpha_1}^2
  \ts_{\alpha_2}$ is equal to the determinant shown in Figure~\ref{fig:det3}.
  
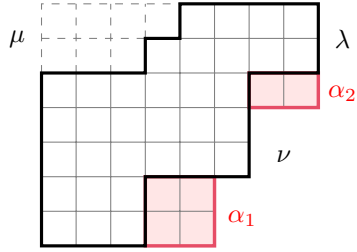
\begin{figure}
  \centering
\begin{tikzpicture}[scale=.46]
\fill[fill=red!10!white] (3,0) rectangle (5,2)
		(6,4) rectangle (8,5);
\draw[color=gray] (0,1)--(4,1)
			(0,2)--(3,2)
			(0,3)--(6,3)
			(0,4)--(6,4)
			(3,5)--(6,5)
			(4,6)--(8,6)
			(1,0)--(1,5)
			(2,0)--(2,5)
			(3,2)--(3,5)
			(4,1)--(4,6)
			(5,2)--(5,7)
			(6,5)--(6,7)
			(7,4)--(7,7)
			(4,0)--(4,1)--(5,1);
\draw[dashed, color=gray] (0,5)--(0,7)--(4,7)
		       (0,6)--(3,6)
		       (1,5)--(1,7)
		       (2,5)--(2,7)
		       (3,6)--(3,7);
  \draw [line width=1.3pt, color=dredcolor] (3,0)--(5,0)--(5,2)--(3,2)--(3,0)--cycle
  		(6,4)--(8,4)--(8,5)--(6,5)--(6,4)--cycle;
\draw[thick, line width=1.2pt] (0,0)--(3,0)--(3,2)--(6,2)--(6,5)--(8,5)--(8,7)--(4,7)--(4,6)--(3,6)--(3,5)--(0,5)--(0,0)--cycle;
\node[] at (-.7, 6) {$\mu$};
\node[] at (8.7, 6) {$\lambda$};
\node[] at (7, 2.6) {$\nu$};
\node[] at (5.8, .8) {\color{red}$\alpha_1$};
\node[] at (8.7, 4.4) {\color{red}$\alpha_2$};
 \end{tikzpicture}
 \caption{The skew shape $\lm$ (black) and the connected components $\alpha_1$
   and $\alpha_2$ of $\nl$ (red).}
  \label{fig:lmn}
\end{figure}

\begin{figure}
  \centering
\begin{tikzpicture}[scale=.46]
    \fill[fill=cyan!10!white] (0,2)--(1,2)--(1,4)--(3,4)--(3,5)--(0,5)--(0,2)--cycle
    			(3,6)--(4,6)--(4,7)--(6,7)--(6,8)--(3,8)--(3,6)--cycle;
    \draw[thick] (0,0)--(1,0)--(1,2)--(0,2)--(0,0)--cycle
    		      (0,1)--(1,1)
		      (2,5)--(3,5)--(3,7)--(2,7)--(2,5)--cycle
		      (2,6)--(3,6)
		      (6,7)--(7,7)--(7,8)--(6,8);
     \draw[very thick, color=frenchblue] (0,2)--(1,2)--(1,4)--(3,4)--(3,5)--(0,5)--(0,2)--cycle
     			(0,3)--(1,3)
			(0,4)--(1,4)--(1,5)
			(2,4)--(2,5)
     			(3,6)--(4,6)--(4,7)--(6,7)--(6,8)--(3,8)--(3,6)--cycle
			(3,7)--(4,7)--(4,8)
			(5,7)--(5,8);
   \node[] at (.5, .5) {\small $-6$};
\node[] at (.5, 1.5) {\small $-5$};
\node[] at (.5, 2.5) {\small $-4$};
\node[] at (.5, 3.5) {\small $-3$};
\node[] at (.5, 4.5) {\small $-2$};
\node[] at (1.5, 4.5) {\small $-1$};
\node[] at (2.5, 4.5) {\small $0$};
\node[] at (2.5, 5.5) {\small $1$};
\node[] at (2.5, 6.5) {\small $2$};
\node[] at (3.5, 6.5) {\small $3$};
\node[] at (3.5, 7.5) {\small $4$};
\node[] at (4.5, 7.5) {\small $5$};
\node[] at (5.5, 7.5) {\small $6$};
\node[] at (6.5, 7.5) {\small $7$};
\node[] at (5, 3.5) {$\gamma$};
    \end{tikzpicture}
\qquad\qquad
 \begin{tikzpicture}[scale=.46]
\draw[color=gray] (0,1)--(3,1)
			(0,2)--(3,2)
			(0,3)--(6,3)
			(0,4)--(6,4)
			(3,5)--(6,5)
			(4,6)--(8,6)
			(1,0)--(1,5)
			(2,0)--(2,5)
			(3,2)--(3,5)
			(4,2)--(4,6)
			(5,2)--(5,7)
			(6,5)--(6,7)
			(7,5)--(7,7);
\draw[dashed, color=gray] (0,5)--(0,7)--(4,7)
		       (0,6)--(3,6)
		       (1,5)--(1,7)
		       (2,5)--(2,7)
		       (3,6)--(3,7);
 \draw[thick, line width=1.2pt] (0,0)--(3,0)--(3,2)--(6,2)--(6,5)--(8,5)--(8,7)--(4,7)--(4,6)--(3,6)--(3,5)--(0,5)--(0,0)--cycle
 			(1,0)--(1,4)--(3,4)--(3,5)
			(2,0)--(2,3)--(4,3)--(4,5)--(5,5)--(5,6)--(8,6)
			(5,2)--(5,4)--(6,4);
\node[] at (1.5, .5) {$\color{blue} p_1$};
\node[] at (7.5, 6.5) {$\color{red} q_1$};
\node[] at (2.5, .5) {$\color{blue} p_2$};
\node[] at (7.5, 5.5) {$\color{red} q_2$};
\node[] at (5.5, 2.5) {$\color{blue} p_3$};
\node[] at (5.5, 3.5) {$\color{red} q_3$};
\node[] at (.5, .5) {$\color{blue} p_4$};
\node[] at (2.5, 4.5) {$\color{red} q_4$};
\node[] at (8.5, 6.5) {$\theta_1$};
\node[] at (8.5, 5.5) {$\theta_2$};
\node[] at (6.5, 3.5) {$\theta_3$};
\node[] at (-.5, .5) {$\theta_4$};
 \end{tikzpicture}
 \caption{A border strip $\gamma$ (left) and the decomposition $\theta$ (right)
   of $\lm$ with cutting strip $\gamma$. }
  \label{fig:gt}
\end{figure}
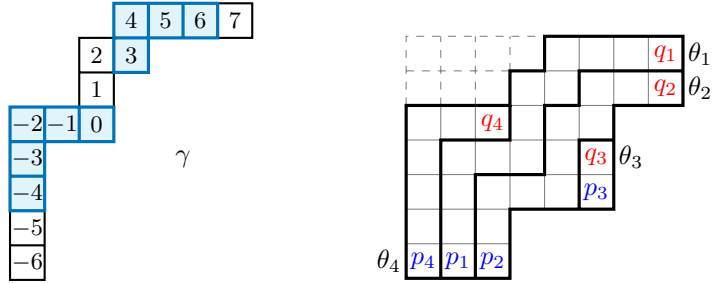

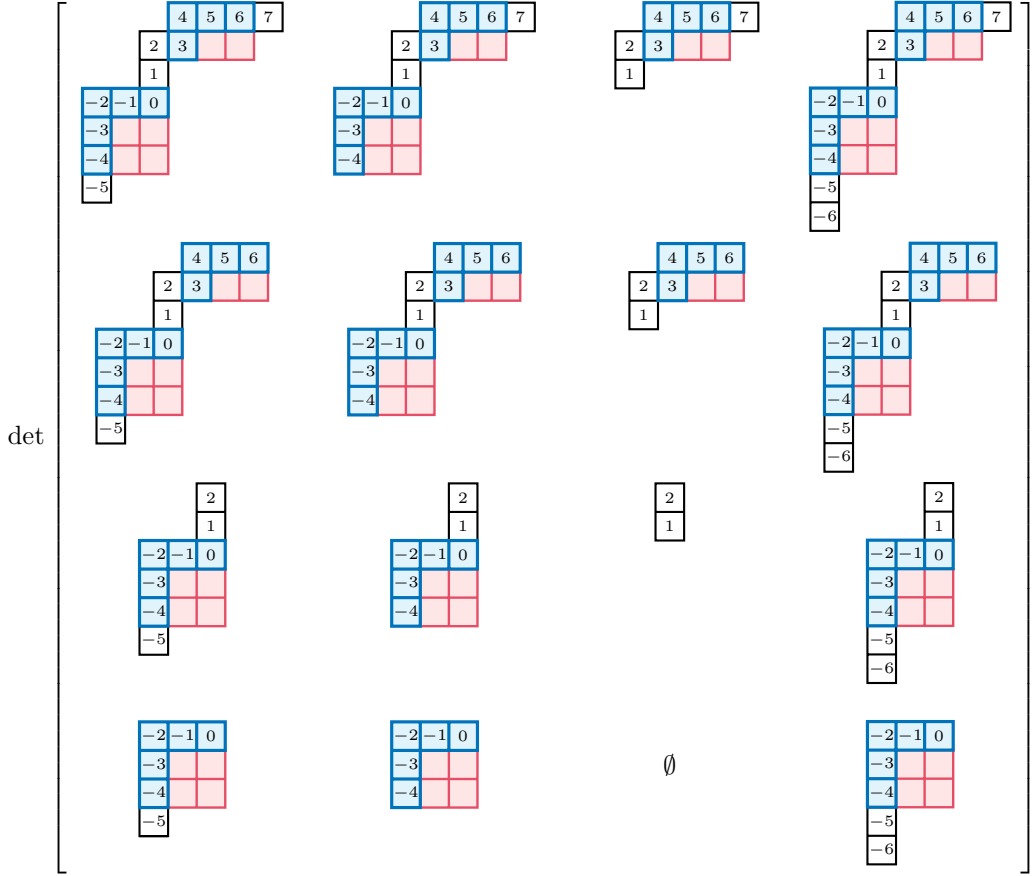
\begin{figure}
  \centering
\[
\det \begin{bmatrix}
~\begin{tikzpicture}[scale=.38]
\fill[fill=red!10!white] (1,2)--(3,2)--(3,4)--(1,4)--(1,2)--cycle
			(4,6)--(6,6)--(6,7)--(4,7)--(4,6)--cycle;
 \draw [line width=.9pt, color=dredcolor] (1,2)--(3,2)--(3,4)--(1,4)--(1,2)--cycle
 		(2,2)--(2,4)
		(1,3)--(3,3)
		(5,6)--(5,7)
  		(4,6)--(6,6)--(6,7)--(4,7)--(4,6)--cycle;
    \fill[fill=cyan!10!white] (0,2)--(1,2)--(1,4)--(3,4)--(3,5)--(0,5)--(0,2)--cycle
    			(3,6)--(4,6)--(4,7)--(6,7)--(6,8)--(3,8)--(3,6)--cycle;
    \draw[thick] (0,1)--(1,1)--(1,2)--(0,2)--(0,1)--cycle	      
		      (2,5)--(3,5)--(3,7)--(2,7)--(2,5)--cycle
		      (2,6)--(3,6)
		      (6,7)--(7,7)--(7,8)--(6,8);
     \draw[very thick, color=frenchblue] (0,2)--(1,2)--(1,4)--(3,4)--(3,5)--(0,5)--(0,2)--cycle
     			(0,3)--(1,3)
			(0,4)--(1,4)--(1,5)
			(2,4)--(2,5)
     			(3,6)--(4,6)--(4,7)--(6,7)--(6,8)--(3,8)--(3,6)--cycle
			(3,7)--(4,7)--(4,8)
			(5,7)--(5,8);
 \node[] at (.5,  .5) {\tiny \phantom{$-6$}};
\node[] at (.5, 1.5) {\tiny $-5$};
\node[] at (.5, 2.5) {\tiny $-4$};
\node[] at (.5, 3.5) {\tiny $-3$};
\node[] at (.5, 4.5) {\tiny $-2$};
\node[] at (1.5, 4.5) {\tiny $-1$};
\node[] at (2.5, 4.5) {\tiny $0$};
\node[] at (2.5, 5.5) {\tiny $1$};
\node[] at (2.5, 6.5) {\tiny $2$};
\node[] at (3.5, 6.5) {\tiny $3$};
\node[] at (3.5, 7.5) {\tiny $4$};
\node[] at (4.5, 7.5) {\tiny $5$};
\node[] at (5.5, 7.5) {\tiny $6$};
\node[] at (6.5, 7.5) {\tiny $7$};
 \end{tikzpicture}~& 
 ~\begin{tikzpicture}[scale=.38]
 \fill[fill=red!10!white] (1,2)--(3,2)--(3,4)--(1,4)--(1,2)--cycle
			(4,6)--(6,6)--(6,7)--(4,7)--(4,6)--cycle;
 \draw [line width=.9pt, color=dredcolor] (1,2)--(3,2)--(3,4)--(1,4)--(1,2)--cycle
 		(2,2)--(2,4)
		(1,3)--(3,3)
		(5,6)--(5,7)
  		(4,6)--(6,6)--(6,7)--(4,7)--(4,6)--cycle;
    \fill[fill=cyan!10!white] (0,2)--(1,2)--(1,4)--(3,4)--(3,5)--(0,5)--(0,2)--cycle
    			(3,6)--(4,6)--(4,7)--(6,7)--(6,8)--(3,8)--(3,6)--cycle;
    \draw[thick] (2,5)--(3,5)--(3,7)--(2,7)--(2,5)--cycle
		      (2,6)--(3,6)
		      (6,7)--(7,7)--(7,8)--(6,8);
     \draw[very thick, color=frenchblue] (0,2)--(1,2)--(1,4)--(3,4)--(3,5)--(0,5)--(0,2)--cycle
     			(0,3)--(1,3)
			(0,4)--(1,4)--(1,5)
			(2,4)--(2,5)
     			(3,6)--(4,6)--(4,7)--(6,7)--(6,8)--(3,8)--(3,6)--cycle
			(3,7)--(4,7)--(4,8)
			(5,7)--(5,8);
   \node[] at (1.5, .5) {\tiny \phantom{$-6$}};
\node[] at (.5, 2.5) {\tiny $-4$};
\node[] at (.5, 3.5) {\tiny $-3$};
\node[] at (.5, 4.5) {\tiny $-2$};
\node[] at (1.5, 4.5) {\tiny $-1$};
\node[] at (2.5, 4.5) {\tiny $0$};
\node[] at (2.5, 5.5) {\tiny $1$};
\node[] at (2.5, 6.5) {\tiny $2$};
\node[] at (3.5, 6.5) {\tiny $3$};
\node[] at (3.5, 7.5) {\tiny $4$};
\node[] at (4.5, 7.5) {\tiny $5$};
\node[] at (5.5, 7.5) {\tiny $6$};
\node[] at (6.5, 7.5) {\tiny $7$};
 \end{tikzpicture}~& 
 ~\begin{tikzpicture}[scale=.38]
 \fill[fill=red!10!white] 
			(4,6)--(6,6)--(6,7)--(4,7)--(4,6)--cycle;
 \draw [line width=.9pt, color=dredcolor] 
 		(5,6)--(5,7)
  		(4,6)--(6,6)--(6,7)--(4,7)--(4,6)--cycle;
    \fill[fill=cyan!10!white] 
    			(3,6)--(4,6)--(4,7)--(6,7)--(6,8)--(3,8)--(3,6)--cycle;
    \draw[thick] (2,5)--(3,5)--(3,7)--(2,7)--(2,5)--cycle
		      (2,6)--(3,6)
		      (6,7)--(7,7)--(7,8)--(6,8);
     \draw[very thick, color=frenchblue] 
        			(3,6)--(4,6)--(4,7)--(6,7)--(6,8)--(3,8)--(3,6)--cycle
			(3,7)--(4,7)--(4,8)
			(5,7)--(5,8);
   \node[] at (1.5, .5) {\tiny \phantom{$-6$}};
\node[] at (2.5, 5.5) {\tiny $1$};
\node[] at (2.5, 6.5) {\tiny $2$};
\node[] at (3.5, 6.5) {\tiny $3$};
\node[] at (3.5, 7.5) {\tiny $4$};
\node[] at (4.5, 7.5) {\tiny $5$};
\node[] at (5.5, 7.5) {\tiny $6$};
\node[] at (6.5, 7.5) {\tiny $7$};
 \end{tikzpicture}~&~\begin{tikzpicture}[scale=.38]
 \fill[fill=red!10!white] (1,2)--(3,2)--(3,4)--(1,4)--(1,2)--cycle
			(4,6)--(6,6)--(6,7)--(4,7)--(4,6)--cycle;
 \draw [line width=.9pt, color=dredcolor] (1,2)--(3,2)--(3,4)--(1,4)--(1,2)--cycle
 		(2,2)--(2,4)
		(1,3)--(3,3)
		(5,6)--(5,7)
  		(4,6)--(6,6)--(6,7)--(4,7)--(4,6)--cycle;
    \fill[fill=cyan!10!white] (0,2)--(1,2)--(1,4)--(3,4)--(3,5)--(0,5)--(0,2)--cycle
    			(3,6)--(4,6)--(4,7)--(6,7)--(6,8)--(3,8)--(3,6)--cycle;
    \draw[thick] (0,0)--(1,0)--(1,2)--(0,2)--(0,0)--cycle
    			(0,1)--(1,1)	      
		      (2,5)--(3,5)--(3,7)--(2,7)--(2,5)--cycle
		      (2,6)--(3,6)
		        (6,7)--(7,7)--(7,8)--(6,8);
     \draw[very thick, color=frenchblue] (0,2)--(1,2)--(1,4)--(3,4)--(3,5)--(0,5)--(0,2)--cycle
     			(0,3)--(1,3)
			(0,4)--(1,4)--(1,5)
			(2,4)--(2,5)
     			(3,6)--(4,6)--(4,7)--(6,7)--(6,8)--(3,8)--(3,6)--cycle
			(3,7)--(4,7)--(4,8)
			(5,7)--(5,8);
   \node[] at (.5, .5) {\tiny $-6$};
\node[] at (.5, 1.5) {\tiny $-5$};
\node[] at (.5, 2.5) {\tiny $-4$};
\node[] at (.5, 3.5) {\tiny $-3$};
\node[] at (.5, 4.5) {\tiny $-2$};
\node[] at (1.5, 4.5) {\tiny $-1$};
\node[] at (2.5, 4.5) {\tiny $0$};
\node[] at (2.5, 5.5) {\tiny $1$};
\node[] at (2.5, 6.5) {\tiny $2$};
\node[] at (3.5, 6.5) {\tiny $3$};
\node[] at (3.5, 7.5) {\tiny $4$};
\node[] at (4.5, 7.5) {\tiny $5$};
\node[] at (5.5, 7.5) {\tiny $6$};
\node[] at (6.5, 7.5) {\tiny $7$};
 \end{tikzpicture}~\\
 ~\begin{tikzpicture}[scale=.38]
 \fill[fill=red!10!white] (1,2)--(3,2)--(3,4)--(1,4)--(1,2)--cycle
			(4,6)--(6,6)--(6,7)--(4,7)--(4,6)--cycle;
 \draw [line width=.9pt, color=dredcolor] (1,2)--(3,2)--(3,4)--(1,4)--(1,2)--cycle
 		(2,2)--(2,4)
		(1,3)--(3,3)
		(5,6)--(5,7)
  		(4,6)--(6,6)--(6,7)--(4,7)--(4,6)--cycle;
    \fill[fill=cyan!10!white] (0,2)--(1,2)--(1,4)--(3,4)--(3,5)--(0,5)--(0,2)--cycle
    			(3,6)--(4,6)--(4,7)--(6,7)--(6,8)--(3,8)--(3,6)--cycle;
    \draw[thick] (0,1)--(1,1)--(1,2)--(0,2)--(0,1)--cycle	      
		      (2,5)--(3,5)--(3,7)--(2,7)--(2,5)--cycle
		      (2,6)--(3,6);
     \draw[very thick, color=frenchblue] (0,2)--(1,2)--(1,4)--(3,4)--(3,5)--(0,5)--(0,2)--cycle
     			(0,3)--(1,3)
			(0,4)--(1,4)--(1,5)
			(2,4)--(2,5)
     			(3,6)--(4,6)--(4,7)--(6,7)--(6,8)--(3,8)--(3,6)--cycle
			(3,7)--(4,7)--(4,8)
			(5,7)--(5,8);
 \node[] at (.5,  .5) {\tiny \phantom{$-6$}};
\node[] at (.5, 1.5) {\tiny $-5$};
\node[] at (.5, 2.5) {\tiny $-4$};
\node[] at (.5, 3.5) {\tiny $-3$};
\node[] at (.5, 4.5) {\tiny $-2$};
\node[] at (1.5, 4.5) {\tiny $-1$};
\node[] at (2.5, 4.5) {\tiny $0$};
\node[] at (2.5, 5.5) {\tiny $1$};
\node[] at (2.5, 6.5) {\tiny $2$};
\node[] at (3.5, 6.5) {\tiny $3$};
\node[] at (3.5, 7.5) {\tiny $4$};
\node[] at (4.5, 7.5) {\tiny $5$};
\node[] at (5.5, 7.5) {\tiny $6$};
 \end{tikzpicture}~&~\begin{tikzpicture}[scale=.38]
 \fill[fill=red!10!white] (1,2)--(3,2)--(3,4)--(1,4)--(1,2)--cycle
			(4,6)--(6,6)--(6,7)--(4,7)--(4,6)--cycle;
 \draw [line width=.9pt, color=dredcolor] (1,2)--(3,2)--(3,4)--(1,4)--(1,2)--cycle
 		(2,2)--(2,4)
		(1,3)--(3,3)
		(5,6)--(5,7)
  		(4,6)--(6,6)--(6,7)--(4,7)--(4,6)--cycle;
    \fill[fill=cyan!10!white] (0,2)--(1,2)--(1,4)--(3,4)--(3,5)--(0,5)--(0,2)--cycle
    			(3,6)--(4,6)--(4,7)--(6,7)--(6,8)--(3,8)--(3,6)--cycle;
    \draw[thick] (2,5)--(3,5)--(3,7)--(2,7)--(2,5)--cycle
		      (2,6)--(3,6);
     \draw[very thick, color=frenchblue] (0,2)--(1,2)--(1,4)--(3,4)--(3,5)--(0,5)--(0,2)--cycle
     			(0,3)--(1,3)
			(0,4)--(1,4)--(1,5)
			(2,4)--(2,5)
     			(3,6)--(4,6)--(4,7)--(6,7)--(6,8)--(3,8)--(3,6)--cycle
			(3,7)--(4,7)--(4,8)
			(5,7)--(5,8);
   \node[] at (1.5, .5) {\tiny \phantom{$-6$}};
\node[] at (.5, 2.5) {\tiny $-4$};
\node[] at (.5, 3.5) {\tiny $-3$};
\node[] at (.5, 4.5) {\tiny $-2$};
\node[] at (1.5, 4.5) {\tiny $-1$};
\node[] at (2.5, 4.5) {\tiny $0$};
\node[] at (2.5, 5.5) {\tiny $1$};
\node[] at (2.5, 6.5) {\tiny $2$};
\node[] at (3.5, 6.5) {\tiny $3$};
\node[] at (3.5, 7.5) {\tiny $4$};
\node[] at (4.5, 7.5) {\tiny $5$};
\node[] at (5.5, 7.5) {\tiny $6$};
 \end{tikzpicture}~& 
 ~\begin{tikzpicture}[scale=.38]
 \fill[fill=red!10!white] 
			(4,6)--(6,6)--(6,7)--(4,7)--(4,6)--cycle;
 \draw [line width=.9pt, color=dredcolor] 
 		(5,6)--(5,7)
  		(4,6)--(6,6)--(6,7)--(4,7)--(4,6)--cycle;
    \fill[fill=cyan!10!white] 
    			(3,6)--(4,6)--(4,7)--(6,7)--(6,8)--(3,8)--(3,6)--cycle;
    \draw[thick] (2,5)--(3,5)--(3,7)--(2,7)--(2,5)--cycle
		      (2,6)--(3,6);
     \draw[very thick, color=frenchblue] 
        			(3,6)--(4,6)--(4,7)--(6,7)--(6,8)--(3,8)--(3,6)--cycle
			(3,7)--(4,7)--(4,8)
			(5,7)--(5,8);
   \node[] at (1.5, .5) {\tiny \phantom{$-6$}};
\node[] at (2.5, 5.5) {\tiny $1$};
\node[] at (2.5, 6.5) {\tiny $2$};
\node[] at (3.5, 6.5) {\tiny $3$};
\node[] at (3.5, 7.5) {\tiny $4$};
\node[] at (4.5, 7.5) {\tiny $5$};
\node[] at (5.5, 7.5) {\tiny $6$};
 \end{tikzpicture}~&~\begin{tikzpicture}[scale=.38]
 \fill[fill=red!10!white] (1,2)--(3,2)--(3,4)--(1,4)--(1,2)--cycle
			(4,6)--(6,6)--(6,7)--(4,7)--(4,6)--cycle;
 \draw [line width=.9pt, color=dredcolor] (1,2)--(3,2)--(3,4)--(1,4)--(1,2)--cycle
 		(2,2)--(2,4)
		(1,3)--(3,3)
		(5,6)--(5,7)
  		(4,6)--(6,6)--(6,7)--(4,7)--(4,6)--cycle;
    \fill[fill=cyan!10!white] (0,2)--(1,2)--(1,4)--(3,4)--(3,5)--(0,5)--(0,2)--cycle
    			(3,6)--(4,6)--(4,7)--(6,7)--(6,8)--(3,8)--(3,6)--cycle;
    \draw[thick] (0,0)--(1,0)--(1,2)--(0,2)--(0,0)--cycle
    			(0,1)--(1,1)	      
		      (2,5)--(3,5)--(3,7)--(2,7)--(2,5)--cycle
		      (2,6)--(3,6);
     \draw[very thick, color=frenchblue] (0,2)--(1,2)--(1,4)--(3,4)--(3,5)--(0,5)--(0,2)--cycle
     			(0,3)--(1,3)
			(0,4)--(1,4)--(1,5)
			(2,4)--(2,5)
     			(3,6)--(4,6)--(4,7)--(6,7)--(6,8)--(3,8)--(3,6)--cycle
			(3,7)--(4,7)--(4,8)
			(5,7)--(5,8);
   \node[] at (.5, .5) {\tiny $-6$};
\node[] at (.5, 1.5) {\tiny $-5$};
\node[] at (.5, 2.5) {\tiny $-4$};
\node[] at (.5, 3.5) {\tiny $-3$};
\node[] at (.5, 4.5) {\tiny $-2$};
\node[] at (1.5, 4.5) {\tiny $-1$};
\node[] at (2.5, 4.5) {\tiny $0$};
\node[] at (2.5, 5.5) {\tiny $1$};
\node[] at (2.5, 6.5) {\tiny $2$};
\node[] at (3.5, 6.5) {\tiny $3$};
\node[] at (3.5, 7.5) {\tiny $4$};
\node[] at (4.5, 7.5) {\tiny $5$};
\node[] at (5.5, 7.5) {\tiny $6$};
 \end{tikzpicture}~\\ 
 ~\begin{tikzpicture}[scale=.38]
 \fill[fill=red!10!white] (1,2)--(3,2)--(3,4)--(1,4)--(1,2)--cycle;
 \draw [line width=.9pt, color=dredcolor] (1,2)--(3,2)--(3,4)--(1,4)--(1,2)--cycle
 		(2,2)--(2,4)
		(1,3)--(3,3);
    \fill[fill=cyan!10!white] (0,2)--(1,2)--(1,4)--(3,4)--(3,5)--(0,5)--(0,2)--cycle;
    \draw[thick] (0,1)--(1,1)--(1,2)--(0,2)--(0,1)--cycle	      
		      (2,5)--(3,5)--(3,7)--(2,7)--(2,5)--cycle
		      (2,6)--(3,6);
     \draw[very thick, color=frenchblue] (0,2)--(1,2)--(1,4)--(3,4)--(3,5)--(0,5)--(0,2)--cycle
     			(0,3)--(1,3)
			(0,4)--(1,4)--(1,5)
			(2,4)--(2,5);
   \node[] at (.5, .5) {\tiny \phantom{$-6$}};
\node[] at (.5, 1.5) {\tiny $-5$};
\node[] at (.5, 2.5) {\tiny $-4$};
\node[] at (.5, 3.5) {\tiny $-3$};
\node[] at (.5, 4.5) {\tiny $-2$};
\node[] at (1.5, 4.5) {\tiny $-1$};
\node[] at (2.5, 4.5) {\tiny $0$};
\node[] at (2.5, 5.5) {\tiny $1$};
\node[] at (2.5, 6.5) {\tiny $2$};
 \end{tikzpicture}~&~\begin{tikzpicture}[scale=.38]
  \fill[fill=red!10!white] (1,2)--(3,2)--(3,4)--(1,4)--(1,2)--cycle;
 \draw [line width=.9pt, color=dredcolor] (1,2)--(3,2)--(3,4)--(1,4)--(1,2)--cycle
 		(2,2)--(2,4)
		(1,3)--(3,3);
    \fill[fill=cyan!10!white] (0,2)--(1,2)--(1,4)--(3,4)--(3,5)--(0,5)--(0,2)--cycle;
    \draw[thick] 	      
		      (2,5)--(3,5)--(3,7)--(2,7)--(2,5)--cycle
		      (2,6)--(3,6);
     \draw[very thick, color=frenchblue] (0,2)--(1,2)--(1,4)--(3,4)--(3,5)--(0,5)--(0,2)--cycle
     			(0,3)--(1,3)
			(0,4)--(1,4)--(1,5)
			(2,4)--(2,5);
   \node[] at (.5, .5) {\tiny \phantom{$-6$}};
\node[] at (.5, 2.5) {\tiny $-4$};
\node[] at (.5, 3.5) {\tiny $-3$};
\node[] at (.5, 4.5) {\tiny $-2$};
\node[] at (1.5, 4.5) {\tiny $-1$};
\node[] at (2.5, 4.5) {\tiny $0$};
\node[] at (2.5, 5.5) {\tiny $1$};
\node[] at (2.5, 6.5) {\tiny $2$};
 \end{tikzpicture}~&~\begin{tikzpicture}[scale=.38]
    \draw[thick]       
		      (2,5)--(3,5)--(3,7)--(2,7)--(2,5)--cycle
		      (2,6)--(3,6);
   \node[] at (2.5, .5) {\tiny\phantom{ $-6$}};
\node[] at (2.5, 5.5) {\tiny $1$};
\node[] at (2.5, 6.5) {\tiny $2$};
 \end{tikzpicture}~&~\begin{tikzpicture}[scale=.38]
  \fill[fill=red!10!white] (1,2)--(3,2)--(3,4)--(1,4)--(1,2)--cycle;
 \draw [line width=.9pt, color=dredcolor] (1,2)--(3,2)--(3,4)--(1,4)--(1,2)--cycle
 		(2,2)--(2,4)
		(1,3)--(3,3);
    \fill[fill=cyan!10!white] (0,2)--(1,2)--(1,4)--(3,4)--(3,5)--(0,5)--(0,2)--cycle;
    \draw[thick] (0,0)--(1,0)--(1,2)--(0,2)--(0,0)--cycle
    			(0,1)--(1,1)	      
		      (2,5)--(3,5)--(3,7)--(2,7)--(2,5)--cycle
		      (2,6)--(3,6);
     \draw[very thick, color=frenchblue] (0,2)--(1,2)--(1,4)--(3,4)--(3,5)--(0,5)--(0,2)--cycle
     			(0,3)--(1,3)
			(0,4)--(1,4)--(1,5)
			(2,4)--(2,5);
   \node[] at (.5, .5) {\tiny $-6$};
\node[] at (.5, 1.5) {\tiny $-5$};
\node[] at (.5, 2.5) {\tiny $-4$};
\node[] at (.5, 3.5) {\tiny $-3$};
\node[] at (.5, 4.5) {\tiny $-2$};
\node[] at (1.5, 4.5) {\tiny $-1$};
\node[] at (2.5, 4.5) {\tiny $0$};
\node[] at (2.5, 5.5) {\tiny $1$};
\node[] at (2.5, 6.5) {\tiny $2$};
 \end{tikzpicture}~\\
 ~\begin{tikzpicture}[scale=.38]
  \fill[fill=red!10!white] (1,2)--(3,2)--(3,4)--(1,4)--(1,2)--cycle;
 \draw [line width=.9pt, color=dredcolor] (1,2)--(3,2)--(3,4)--(1,4)--(1,2)--cycle
 		(2,2)--(2,4)
		(1,3)--(3,3);
    \fill[fill=cyan!10!white] (0,2)--(1,2)--(1,4)--(3,4)--(3,5)--(0,5)--(0,2)--cycle;
    \draw[thick] (0,1)--(1,1)--(1,2)--(0,2)--(0,1)--cycle
    			(0,1)--(1,1);      
     \draw[very thick, color=frenchblue] (0,2)--(1,2)--(1,4)--(3,4)--(3,5)--(0,5)--(0,2)--cycle
     			(0,3)--(1,3)
			(0,4)--(1,4)--(1,5)
			(2,4)--(2,5);
   \node[] at (.5, .5) {\tiny \phantom{$-6$}};
\node[] at (.5, 1.5) {\tiny $-5$};
\node[] at (.5, 2.5) {\tiny $-4$};
\node[] at (.5, 3.5) {\tiny $-3$};
\node[] at (.5, 4.5) {\tiny $-2$};
\node[] at (1.5, 4.5) {\tiny $-1$};
\node[] at (2.5, 4.5) {\tiny $0$};
\node[] at (2.5, 5.5) {\tiny \phantom{$1$}};
 \end{tikzpicture}~& ~\begin{tikzpicture}[scale=.38]
  \fill[fill=red!10!white] (1,2)--(3,2)--(3,4)--(1,4)--(1,2)--cycle;
 \draw [line width=.9pt, color=dredcolor] (1,2)--(3,2)--(3,4)--(1,4)--(1,2)--cycle
 		(2,2)--(2,4)
		(1,3)--(3,3);
    \fill[fill=cyan!10!white] (0,2)--(1,2)--(1,4)--(3,4)--(3,5)--(0,5)--(0,2)--cycle;
     \draw[very thick, color=frenchblue] (0,2)--(1,2)--(1,4)--(3,4)--(3,5)--(0,5)--(0,2)--cycle
     			(0,3)--(1,3)
			(0,4)--(1,4)--(1,5)
			(2,4)--(2,5);
   \node[] at (.5, .5) {\tiny \phantom{$-6$}};
\node[] at (.5, 2.5) {\tiny $-4$};
\node[] at (.5, 3.5) {\tiny $-3$};
\node[] at (.5, 4.5) {\tiny $-2$};
\node[] at (1.5, 4.5) {\tiny $-1$};
\node[] at (2.5, 4.5) {\tiny $0$};
 \end{tikzpicture}~&
 ~\begin{tikzpicture}[scale=.38]
 \node[] at (.5, .5) {\tiny \phantom{$-6$}};
  \node[] at (.5, 3.5) {$\emptyset$};
   \end{tikzpicture}~& ~\begin{tikzpicture}[scale=.38]
    \fill[fill=red!10!white] (1,2)--(3,2)--(3,4)--(1,4)--(1,2)--cycle;
 \draw [line width=.9pt, color=dredcolor] (1,2)--(3,2)--(3,4)--(1,4)--(1,2)--cycle
 		(2,2)--(2,4)
		(1,3)--(3,3);
    \fill[fill=cyan!10!white] (0,2)--(1,2)--(1,4)--(3,4)--(3,5)--(0,5)--(0,2)--cycle;
    \draw[thick] (0,0)--(1,0)--(1,2)--(0,2)--(0,0)--cycle
    			(0,1)--(1,1);      
     \draw[very thick, color=frenchblue] (0,2)--(1,2)--(1,4)--(3,4)--(3,5)--(0,5)--(0,2)--cycle
     			(0,3)--(1,3)
			(0,4)--(1,4)--(1,5)
			(2,4)--(2,5);
   \node[] at (.5, .5) {\tiny $-6$};
\node[] at (.5, 1.5) {\tiny $-5$};
\node[] at (.5, 2.5) {\tiny $-4$};
\node[] at (.5, 3.5) {\tiny $-3$};
\node[] at (.5, 4.5) {\tiny $-2$};
\node[] at (1.5, 4.5) {\tiny $-1$};
\node[] at (2.5, 4.5) {\tiny $0$};
 \end{tikzpicture}~
\end{bmatrix}
\]  
\caption{An illustrate of the determinant $\det \left(
    \ts_{(\gamma\oplus\nl)[p_j,q_i]}\right)_{i,j=1}^k$. The $(i,j)$ entry only
  shows the skew shape $(\gamma\oplus\nl)[p_j,q_i]$, where the contents of the
  cells coming from $\gamma$ are shown and the cells coming from $\nl$ are
  colored red.}
  \label{fig:det3}
\end{figure}
 \end{exam}

 If $\nu=\lambda$ in Theorem~\ref{thm:main_HG}, then we obtain the following
 Hamel--Goulden formula for Macdonald's 9th variation of Schur functions, which
 was also proved by Bachmann and Charlton \cite{Bachmann_2020}, and Foley and
 King \cite{Foley20} using the Lindstr\"om--Gessel--Viennot lemma.

\begin{cor}\label{cor:HG}
  Let $\lambda$ and $\mu$ be partitions with $\mu\subseteq \lambda$. Suppose
  that $\gamma$ is a border strip with $\Cont(\lambda)\subseteq\Cont(\gamma)$
  and $\theta=(\theta_1,\dots,\theta_k)$ is the decomposition of $\lm$ with
  cutting strip $\gamma$. Then
\[
\ts_{\lambda/\mu} = \det \left( \ts_{\gamma[p_j,q_i]}\right)_{i,j=1}^k. 
\]
\end{cor}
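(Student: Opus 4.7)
The plan is to deduce this identity as the degenerate case $\nu = \lambda$ of the generalized Hamel--Goulden formula (Theorem~\ref{thm:main_HG}). Since Theorem~\ref{thm:main_HG} has already been stated, the task reduces to verifying that its hypotheses hold in this specialization and that each ingredient on both sides simplifies to the corresponding ingredient of Corollary~\ref{cor:HG}.

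First I would observe that when $\nu=\lambda$, the skew shape $\nu/\lambda$ is empty, so $\Cont(\nu/\lambda)=\emptyset\subseteq\Cont(\lambda)$ holds trivially, and the second bullet in Definitions~\ref{defn:compatible} and~\ref{defn:comp2} is vacuous because there are no connected components of $\nu/\lambda$ to test. Hence every border strip $\gamma$ satisfying $\Cont(\lambda)\subseteq\Cont(\gamma)$ is $\nu/\lambda$-compatible, and every $\mu\subseteq\lambda$ is $\nu/\lambda$-compatible — exactly the remark preceding Figure~\ref{fig:compatible}. Thus the hypotheses of Theorem~\ref{thm:main_HG} are met for any $\gamma$ and $\mu$ allowed in Corollary~\ref{cor:HG}, and the decomposition $\theta=(\theta_1,\dots,\theta_k)$ of $\lambda/\mu$ determined by the cutting strip $\gamma$ is the same object in both statements.

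Next I would simplify each piece of Theorem~\ref{thm:main_HG} under the specialization $\nu=\lambda$: the factor $\ts_{\nu/\mu}$ becomes $\ts_{\lambda/\mu}$; the product $\prod_{s=1}^{\ell}\ts_{\alpha_s}^{r_s-1}$ is empty (there are no components $\alpha_s$, i.e., $\ell=0$) and therefore equals $1$; and $\gamma\oplus(\nu/\lambda)=\gamma$, since no connected components of $\nu/\lambda$ are glued below $\gamma$. Consequently $(\gamma\oplus\nu/\lambda)[p_j,q_i]=\gamma[p_j,q_i]$ for all $i,j$, and substituting these simplifications into the conclusion of Theorem~\ref{thm:main_HG} gives
\[
\ts_{\lambda/\mu}=\det\bigl(\ts_{\gamma[p_j,q_i]}\bigr)_{i,j=1}^{k},
\]
which is exactly Corollary~\ref{cor:HG}.

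There is no real obstacle here beyond bookkeeping: the entire content is in Theorem~\ref{thm:main_HG}, and the corollary is a clean specialization. The only point worth double-checking carefully is that the empty product convention yields $1$ (so that the statement of Corollary~\ref{cor:HG} has no stray factor) and that the decompositions match on the nose — both of which are immediate from the definitions.
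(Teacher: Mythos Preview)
Your proposal is correct and matches the paper's own approach exactly: the paper derives Corollary~\ref{cor:HG} simply by setting $\nu=\lambda$ in Theorem~\ref{thm:main_HG}, and you have spelled out the straightforward bookkeeping (vacuous compatibility, empty product, $\gamma\oplus(\nu/\lambda)=\gamma$) that makes this specialization work.
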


If $\nu/\lambda$ is a disjoint union of single cells and if we restrict
Theorem~\ref{thm:main_HG} to Schur functions, then we obtain Jin's result
\cite[Theorem~2]{Jin_2018} for the cases when the ``enriched diagrams'' are not necessary.

For sequences $\vec a = (a_1,\dots,a_r)$ and $\vec b = (b_1,\dots,b_r)$ with
$a_1>\dots>a_r\ge0$ and $b_1>\dots>b_r\ge0$, the \emph{Frobenius notation}
$(\vec a|\vec b)$ denotes the partition
\[
\{(i,i): 1\le i\le r\} \cup 
\{(i,j): 1\le i\le r, i<j\le a_i\} \cup 
\{(i,j): 1\le j\le r, j<i\le b_i\}.
\]
See Figure~\ref{fig:Frobenius_ex}.

\begin{figure}
  \centering
\begin{tikzpicture}[scale=.48]
\fill[fill=red!10!white] (1.1, 3.1) rectangle (4.9, 3.9);
\fill[fill=red!10!white] (2.1, 2.1) rectangle (3.9, 2.9);
\fill[fill=red!10!white] (3.1, 1.1) rectangle (3.9, 1.9);
\fill[fill=blue!10!white] (.1, .1) rectangle (.9, 2.9);
\fill[fill=blue!10!white] (1.1, 1.1) rectangle (1.9, 1.9);
\draw [thick] (0,0)--(0,4)--(5,4)--(5,3)--(4,3)--(4,1)--(1,1)--(1,0)--(0,0)--cycle;
\draw (0,1)--(1,1);
\draw (0,2)--(4,2);
\draw (0,3)--(4,3);
\draw (1,1)--(1,4);
\draw (2,1)--(2,4);
\draw (3,1)--(3,4);
\draw (4,3)--(4,4);
\draw[thick, color=dredcolor] (1.1,3.1)--(4.9, 3.1)--(4.9, 3.9)--(1.1, 3.9)--(1.1, 3.1)--cycle;
\draw[thick, color=dredcolor] (2.1,2.1)--(3.9, 2.1)--(3.9, 2.9)--(2.1, 2.9)--(2.1, 2.1)--cycle;
\draw[thick, color=dredcolor] (3.1,1.1)--(3.9, 1.1)--(3.9, 1.9)--(3.1, 1.9)--(3.1, 1.1)--cycle;
\draw[thick, color=dbluecolor] (.1,.1)--(.1, 2.9)--(.9, 2.9)--(.9, .1)--(.1, .1)--cycle;
\draw[thick, color=dbluecolor] (1.1,1.1)--(1.1, 1.9)--(1.9, 1.9)--(1.9, 1.1)--(1.1, 1.1)--cycle;
\draw [dashed, color=gray] (0,4)--(3,1);
\end{tikzpicture}
\caption{An illustrate of the Frobenius notation $(4,2,1|3,1,0)$ for the
  partition $\lambda=(5,4,4,1)$.}\label{fig:Frobenius_ex}
\end{figure}
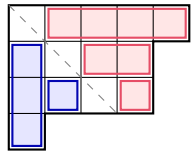

If we take $\nu=(\vec a\sqcup \vec c| \vec b\sqcup \vec d)$, $\lambda=(\vec a|
\vec b)$, $\mu=(\vec c|\vec d)$, and $\theta$ to be the Kreiman decomposition of
$\lm$ in Theorem~\ref{thm:main_HG}, then we obtain the following generalized
Giambelli formula.

\begin{cor}\label{cor:Giam}
  Let $\vec a = (a_1,\dots,a_r)$, $\vec b = (b_1,\dots,b_r)$,
  $\vec c = (c_1,\dots,c_s)$, and $\vec d = (d_1,\dots,d_s)$ be sequences of
  integers such that $r,s\ge0$ and
\[
a_1>\dots>a_r>c_1>\dots>c_s\ge0, \qquad
b_1>\dots>b_r>d_1>\dots>d_s\ge0.
\]
Then
\[
  \ts_{(\vec c|\vec d)}^{r-1} \ts_{(\vec a\sqcup \vec c| \vec b\sqcup \vec d)}
  =\det\left( \ts_{(a_i\sqcup\vec c| b_j\sqcup\vec d)} \right) _{1\le i,j\le r}.
\]
\end{cor}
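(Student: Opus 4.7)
The plan is to invoke Corollary~\ref{cor:main_K} with the partitions $\mu=(\vec c|\vec d)$ and $\lambda=(\vec a\sqcup\vec c|\vec b\sqcup\vec d)$, the latter being the partition playing the role of Corollary~\ref{cor:Giam}'s~$\nu$. The hypotheses $a_1>\cdots>a_r>c_1>\cdots>c_s\ge 0$ and $b_1>\cdots>b_r>d_1>\cdots>d_s\ge 0$ immediately give $\mu\subseteq\lambda$, so Corollary~\ref{cor:main_K} applies and produces a $k\times k$ determinant, where $k$ is the number of border strips in the Kreiman decomposition of $\lambda/\mu$.

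I then compute $k$ and the content endpoints $p_i,q_i$ using Lemma~\ref{lem:C/C}. A direct translation of Frobenius coordinates into the sequence $C_n$ shows that the $(c_i,d_i)$-contributions cancel between $C_n(\lambda)$ and $C_n(\mu)$, so that
\[
C_n(\lambda)\setminus C_n(\mu)=\{a_1,\dots,a_r\},\qquad C_n(\mu)\setminus C_n(\lambda)=\{-b_1-1,\dots,-b_r-1\}.
\]
Hence $k=r$, and after labeling so that $q_1>\cdots>q_r$ one obtains $q_j=a_j$ and $p_i=-b_i$.

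Next, for each pair $(i,j)$ I identify $\mu(p_i-1,q_j)=\mu(-b_i-1,a_j)$. Because $p_i-1=-b_i-1<0\le a_j=q_j$, equation~\eqref{eq:la(a,b)} gives $\mu(p_i-1,q_j)=\mu\cup\mu^+[-b_i,a_j]$. The added piece $\mu^+[-b_i,a_j]$ runs along the northwest boundary of $\mu$ through contents $-b_i,-b_i+1,\dots,a_j$, and tracing through the Frobenius description shows it is exactly the hook of arm $a_j$ and leg $b_i$ prepended to $\mu$. Hence
\[
\mu(p_i-1,q_j)=(a_j\sqcup\vec c\mid b_i\sqcup\vec d).
\]
Moreover, all signs $(-1)^{\chi(p_i>q_j)}$ in Corollary~\ref{cor:main_K} equal $+1$ since $p_i\le 0\le q_j$.

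Combining these, Corollary~\ref{cor:main_K} yields
\[
\ts_{(\vec c|\vec d)}^{\,r-1}\,\ts_{(\vec a\sqcup\vec c\mid\vec b\sqcup\vec d)}
=\det\!\left(\ts_{(a_j\sqcup\vec c\mid b_i\sqcup\vec d)}\right)_{i,j=1}^{r},
\]
and this matrix is the transpose of the one appearing in Corollary~\ref{cor:Giam}, so the two determinants agree. The main obstacle is the bookkeeping with content sequences: establishing that the $\vec c,\vec d$ contributions to $C_n(\lambda)$ and $C_n(\mu)$ cancel, and then verifying that $\mu^+[-b_i,a_j]$ attaches to $\mu$ as exactly the hook $(a_j|b_i)$ in Frobenius coordinates. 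Once these bookkeeping lemmas are in hand, the rest of the argument is a routine specialization of Corollary~\ref{cor:main_K}.
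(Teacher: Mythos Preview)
Your approach is correct and takes a genuinely different route from the paper. The paper deduces the corollary from Theorem~\ref{thm:main_HG} using three nested partitions $\mu=(\vec c|\vec d)\subseteq\lambda=(\vec a|\vec b)\subseteq\nu=(\vec a\sqcup\vec c|\vec b\sqcup\vec d)$ and the Kreiman decomposition of $(\vec a|\vec b)/(\vec c|\vec d)$; one then has to verify the $\nu/\lambda$-compatibility hypotheses and unpack the $(\gamma\oplus\nu/\lambda)[p_j,q_i]$ shapes. You instead specialize Corollary~\ref{cor:main_K} with only two partitions $\mu=(\vec c|\vec d)\subseteq\lambda=(\vec a\sqcup\vec c|\vec b\sqcup\vec d)$, which is more elementary: it sidesteps the Hamel--Goulden machinery entirely and reduces the problem to the pleasant bookkeeping identity $\mu(-b_i-1,a_j)=(a_j\sqcup\vec c\mid b_i\sqcup\vec d)$, proved cleanly via the $C_n$--Frobenius dictionary. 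The trade-off is that the paper's route exhibits the corollary as an instance of a broad determinant scheme, whereas yours is a shorter, more self-contained derivation straight from the Bazin identity.

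One small point to tighten: Lemma~\ref{lem:C/C} only gives $\{q_1,\dots,q_r\}=\{a_1,\dots,a_r\}$ and $\{p_1,\dots,p_r\}=\{-b_1,\dots,-b_r\}$ as \emph{sets}; the assertion $p_i=-b_i$ (that the Kreiman strip with $q_i=a_i$ has starting content exactly $-b_i$) is a statement about the pairing and needs a word of justification. One way is to observe that the $i$th diagonal shift of the inner strip of $\lambda/\mu$ meets $\lambda/\mu$ in a single connected ribbon whose rightmost cell lies in row~$i$ (content $\lambda_i-i=a_i$) and whose bottommost cell lies in column~$i$ (content $i-\lambda'_i=-b_i$); the hypotheses $a_r>c_1$, $b_r>d_1$ guarantee connectedness. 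Alternatively, you can bypass the Kreiman geometry altogether: apply Theorem~\ref{thm:main} directly (with $\nu=\emptyset$ and the roles of $\lambda,\mu$ swapped) and check that the global sign $(-1)^{\inv(\vec b,\vec a)+\binom{r}{2}}$ together with the entrywise signs $(-1)^{\chi(b_j>a_i)}$ collapse to $+1$, since here every $a_j\ge 0>-b_i-1$.
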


If $\vec c = \vec d=\emptyset$, then Corollary~\ref{cor:Giam} reduces to the
Giambelli formula. If $\vec c = \vec d=(0)$ in Corollary~\ref{cor:Giam}, we
obtain
\[
\ts_{(1)}^{r-1} \ts_{(\vec a\sqcup 0|\vec b\sqcup 0)}
  =\det\left( \ts_{((a_i,0)| (b_j,0))} \right) _{1\le i,j\le r},
\]
where each entry has a \emph{near hook shape} $((a_i,0)| (b_j,0))$.

For the rest of this section we give a proof of Theorem~\ref{thm:main_HG}. We
first recall a known property of $\ts_{\lm}$.

Let $\alpha$ and $\beta$ be skew shapes and let $a$ and $b$ be, respectively,
the top-right corner of $\alpha$ and the bottom-left corner of $\beta$. We
define $\alpha\rightarrow \beta$ (resp.~$\alpha\uparrow \beta$) to be the skew
shape obtained from $\alpha$ by attaching $\beta$ so that $b$ is to the right
of $a$ (resp.~above $a$).

\begin{lem} \label{lem:combine}
  \cite[Chapter I, \S5, Example~30 (d)]{Macdonald}
Let $\alpha$ and $\beta$ be skew shapes such that the top-right corner $a$ of $\alpha$
and the bottom-left corner $b$ of $\beta$ satisfy $c(b)=c(a)+1$. Then
\[
\ts_\alpha \ts_\beta = \ts_{\alpha\rightarrow \beta} + \ts_{\alpha\uparrow \beta}.
\]
\end{lem}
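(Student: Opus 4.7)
My plan is to prove the identity by direct manipulation of the Jacobi--Trudi determinants that define the four terms $\ts_\alpha$, $\ts_\beta$, $\ts_{\alpha\to\beta}$, and $\ts_{\alpha\uparrow\beta}$. The key geometric observation is that the condition $c(b)=c(a)+1$ forces the two gluings $\alpha\to\beta$ and $\alpha\uparrow\beta$ to correspond to diagonal shifts of $\beta$ that differ by exactly $(1,1)$: for $\alpha\to\beta$ the shift is $(d,d)$ where $d$ is the row-difference between $a$ and $b$, and for $\alpha\uparrow\beta$ it is $(d-1,d-1)$. Consequently, the Jacobi--Trudi matrices for these two combined shapes agree almost everywhere, differing only at the interface diagonal of content $c(a)+1=c(b)$.

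Concretely, I would pad the outer partitions of $\alpha$ and $\beta$ to a common length $N$, embed everything into a common ambient grid, and write each of $\ts_\alpha$, $\ts_\beta$, $\ts_{\alpha\to\beta}$, $\ts_{\alpha\uparrow\beta}$ as an $N\times N$ Jacobi--Trudi determinant $\det(h_{\widetilde\lambda_i-\widetilde\mu_j-i+j,\widetilde\mu_j-j+1})$ using the definition. After reordering rows and columns so that the first block of indices corresponds to $\alpha$ and the second to $\beta$, the matrices $M_\to$ and $M_\uparrow$ for the two combined shapes would differ only in a single row (or column) at the $\alpha$--$\beta$ interface. By multilinearity of the determinant in that single line, one obtains $\ts_{\alpha\to\beta}+\ts_{\alpha\uparrow\beta}=\det\widetilde M$, where $\widetilde M$ replaces the differing line by the sum of the two.

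The final step is to verify that $\widetilde M$ is block-triangular with diagonal blocks $M_\alpha$ (the JT matrix of $\alpha$) and $M_\beta$ (the JT matrix of $\beta$), so that $\det\widetilde M=\ts_\alpha\,\ts_\beta$. Block-triangularity here relies on the vanishing rule $h_{r,s}=0$ for $r<0$ built into the 9th variation: after performing the summation, the cross-block entries of $\widetilde M$ (those indexed by a row coming from $\beta$ and a column coming from $\alpha$, or vice versa) acquire a strictly negative first index $\widetilde\lambda_i-\widetilde\mu_j-i+j<0$ and hence vanish.

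The main obstacle is the careful index bookkeeping required to carry out all of the above explicitly: pinpointing the single row/column in which $M_\to$ differs from $M_\uparrow$, tracking how the $(1,1)$ shift difference manifests in the entries $h_{r,s}$, and confirming that the summed matrix $\widetilde M$ indeed has the claimed block-triangular structure with the correct diagonal blocks. Since this identity is classical (Macdonald, Chapter~I, \S5, Example~30(d)), an alternative route---should the bookkeeping become cumbersome---is to reduce the statement to Macdonald's exercise by verifying that the 9th variation falls within its scope; but the Jacobi--Trudi manipulation sketched above should give a self-contained proof using only the definition of $\ts_{\lm}$ and standard properties of determinants.
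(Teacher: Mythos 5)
The paper does not actually prove this lemma: it is stated with a citation to Macdonald's book and used as a black box, so there is no internal proof to compare against. Your Jacobi--Trudi approach is sound and does yield a self-contained proof, but three of the bookkeeping points you defer are worth making precise, because as literally stated your sketch is slightly off. First, writing $R_i=\rho_i-i$, $C_j=\pi_j-j$ so that the JT entry is $h_{R_i-C_j,\,C_j+1}$, the matrix $M_\to$ is obtained from $M_\uparrow$ by deleting one row \emph{and} one column (the row indexed by $R^{\alpha}_1=c(a)$ and the column indexed by $C^{\beta}_p=c(b)-1=c(a)$), not by changing a single line; the ``single line'' picture only appears after you re-insert that row as a unit vector. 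Second, the cross-block entries of $M_\uparrow$ do not \emph{all} vanish by the rule $h_{r,s}=0$ for $r<0$: exactly one of them, at the interface position, equals $h_{0,\,c(a)+1}=1$ rather than $0$, and it is precisely this entry that carries the whole identity. Writing the interface row of $M_\uparrow$ as $(0,\dots,0,1,0,\dots,0)+(0,\dots,0,0,A_{1,1},\dots,A_{1,q})$ and using multilinearity, the second summand gives the genuinely block-triangular matrix with determinant $\ts_\beta\ts_\alpha$, while the first, expanded along that row, gives $(-1)^{(p+1)+p}\det M_\to=-\ts_{\alpha\to\beta}$; hence $\ts_{\alpha\uparrow\beta}=\ts_\alpha\ts_\beta-\ts_{\alpha\to\beta}$ as desired. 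This is the third point: the inserted row and column cross at an off-diagonal position, so the naturally padded $M_\to$ has determinant $-\ts_{\alpha\to\beta}$, and your ``sum of the two lines'' must in effect be a difference (or the padding must carry a sign) for $\widetilde M$ to come out block-triangular. With these corrections the argument closes, and it has the advantage over the paper's citation of being verifiable directly from the definition of $\ts_{\lambda/\mu}$.
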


Chen, Yan, and Yang \cite{Chen_2005} showed that the Hamel--Goulden formula can
be obtained from the Lascoux--Pragacz formula using simple matrix operations.
Their proof uses only the fact that the Schur functions satisfy
\[
s_\alpha s_\beta = s_{\alpha\rightarrow \beta} + s_{\alpha\uparrow \beta}.
\]
Hence, by Lemma~\ref{lem:combine}, their proof extends to the identity in
Theorem~\ref{thm:main_HG}. This implies that it is sufficient to prove this
theorem for the case when $\theta$ is the Lascoux--Pragacz decomposition of
$\lm$. To this end we need three lemmas.

\begin{lem}\label{lem:9th}
  The following properties hold.
  \begin{enumerate}
  \item If $\mu\not\subseteq\lambda$, then $\ts_{\lm}=0$.
  \item If $\mu\subseteq\lambda$ and $\alpha_1,\dots,\alpha_s$ are the connected
    components of $\lm$, then $\ts_{\lm}=\ts_{\alpha_1}\cdots \ts_{\alpha_s}$.
  \end{enumerate}
\end{lem}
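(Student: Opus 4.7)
Both parts will follow from a direct analysis of the defining matrix $M = (h_{\lambda_i-\mu_j-i+j,\mu_j-j+1})_{i,j=1}^n$. The common strategy is to exhibit a sufficiently large zero block inside $M$: in part (1) the block forces the determinant to vanish, while in part (2) the block makes $M$ block upper triangular and enables a clean factorization.

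For (1), assume $\mu \not\subseteq \lambda$ and let $k$ be the smallest index with $\mu_k>\lambda_k$. Then for $i\ge k$ and $j\le k$ the monotonicity of $\lambda$ and $\mu$ gives $\lambda_i\le\lambda_k<\mu_k\le\mu_j$, while $j-i\le 0$; hence the first subscript $\lambda_i-\mu_j-i+j$ is strictly negative and the corresponding entry of $M$ is $0$. This produces an $(n-k+1)\times k$ zero block in the lower-left of $M$, and since $(n-k+1)+k>n$, the standard block argument (any term in the permutation expansion picks some row from $\{k,\dots,n\}$ and some column from $\{1,\dots,k\}$) forces $\det M=0$, i.e.\ $\ts_{\lm}=0$.

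For (2), I would induct on the number $s$ of connected components. The base case $s=1$ is immediate. For $s\ge 2$, let $k$ be chosen so that $\alpha_1$ occupies rows $1,\dots,k$ of $\lm$; then $\mu_k\ge\lambda_{k+1}$. A computation analogous to the one above shows that for $i>k$ and $j\le k$ we have $\lambda_i-\mu_j-i+j\le\lambda_{k+1}-\mu_k-1\le -1$, so the lower-left $(n-k)\times k$ block of $M$ is zero. Hence $M$ is block upper triangular and $\det M=\det A\cdot\det D$, where $A$ is the top-left $k\times k$ block. The block $A$ is literally the defining matrix of $\ts_{\alpha_1}$ using the truncated partitions $(\lambda_1,\dots,\lambda_k)$ and $(\mu_1,\dots,\mu_k)$, so $\det A=\ts_{\alpha_1}$.

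The delicate point—and the place where the proof has to be set up carefully—is recognizing $\det D$ as the $\ts$ of the remaining skew shape at its \emph{original} position, because the second subscripts $\mu_j-j+1$ in the rows $j>k$ record global contents rather than "local" ones, and diagonal shifts are the only translations preserving $\ts$. I would resolve this by padding: set $\tilde\lambda_i=\tilde\mu_i=\lambda_{k+1}$ for $i\le k$ and $\tilde\lambda_i=\lambda_i$, $\tilde\mu_i=\mu_i$ for $i>k$. Both are partitions, and $\tilde\lambda/\tilde\mu$ equals $\alpha_2\cup\cdots\cup\alpha_s$ as a set of cells. Its defining $n\times n$ matrix $\tilde M$ still has the bottom-left block zero (same inequality), its top-left $k\times k$ block is upper-unitriangular (off-diagonal entries with $i>j$ vanish and the diagonal has $h_{0,\cdot}=1$), and its bottom-right block coincides exactly with $D$. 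Expanding $\det\tilde M$ along the block structure yields $\det\tilde M=\det D$, so $\det D=\ts_{\tilde\lambda/\tilde\mu}$. Since $\tilde\lambda/\tilde\mu$ has connected components $\alpha_2,\dots,\alpha_s$, the induction hypothesis gives $\ts_{\tilde\lambda/\tilde\mu}=\ts_{\alpha_2}\cdots\ts_{\alpha_s}$, and combining the two factorizations finishes the proof. The main obstacle is precisely this reconciliation of the second subscript with the intended positional meaning of each $\ts_{\alpha_i}$, which the padding trick handles without any shift of contents.
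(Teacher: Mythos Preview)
Your argument is correct and is precisely the block--triangular argument that the paper has in mind when it cites Macdonald's treatment of the ordinary Schur case; you have carried out in detail what the paper leaves as ``proved similarly.'' In particular, your padding trick---replacing the first $k$ parts of both partitions by $\lambda_{k+1}$ so that the lower-right block $D$ is literally the Jacobi--Trudi matrix of a skew shape sitting at its original row positions---is exactly the right way to reconcile the content-sensitive second subscript $\mu_j-j+1$ with the factorisation, and it is the only point where the 9th variation requires more care than the classical $s_{\lambda/\mu}$.
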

\begin{proof}
  Both properties can be proved similarly as in the Schur function case
  \cite[Chapter I, \S5, (5.7)]{Macdonald}.
\end{proof}

\begin{lem}\label{lem:LP1}
  Suppose that $\mu\subseteq \lambda\subseteq\nu$ are partitions such that $\mu$
  is $\nl$-compatible. Let $\alpha_1,\dots,\alpha_\ell$ be the connected
  components of $\nl$ and let $\gamma$ be the outer strip of $\lambda$. Finally,
  let $\theta=(\theta_1,\dots,\theta_k)$ be the Lascoux--Pragacz decomposition
  of $\lm$. Then
\[
  \ts_{\nu/\lambda(q_i,p_j-1)}=\ts_{(\gamma\oplus\nl)[p_j,q_i]} \prod_{s=1}^{\ell} 
\ts_{\alpha_s}^{\chi(q_i<\min(\Cont(\alpha_s)))+\chi(p_j>\max(\Cont(\alpha_s)))}.
\]
\end{lem}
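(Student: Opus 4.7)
I plan to identify the connected components of $\nu/\lambda(q_i,p_j-1)$ explicitly and apply Lemma~\ref{lem:9th} to factor $\ts_{\nu/\lambda(q_i,p_j-1)}$ accordingly. By Lemma~\ref{lem:C/C} one has $p_j-1\ne q_i$, so exactly one of two cases occurs: $p_j\le q_i$ or $p_j>q_i+1$.

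In the first case, \eqref{eq:la(a,b)} gives $\lambda(q_i,p_j-1)=\lambda\setminus\gamma[p_j,q_i]$, so $\lambda(q_i,p_j-1)\subseteq\nu$ and
\[
  \nu/\lambda(q_i,p_j-1)=(\nu/\lambda)\cup\gamma[p_j,q_i]
\]
is obtained from $\nu/\lambda$ by attaching the border strip $\gamma[p_j,q_i]$. The plan is to show that the connected components of this skew shape are exactly: (a) the $\alpha_s$ for which $\Cont(\alpha_s)\cap[p_j,q_i]=\emptyset$, which remain isolated; and (b) a single \emph{merged} component consisting of $\gamma[p_j,q_i]$ together with every $\alpha_s$ whose content interval meets $[p_j,q_i]$. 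The key point is that, using the $\nl$-compatibility of $\mu$ and the construction of $\gamma\oplus\nl$ (which diagonally translates each $\alpha_s$ to sit immediately below $\gamma$), the merged component coincides, as a skew shape, with $(\gamma\oplus\nl)[p_j,q_i]$. Applying Lemma~\ref{lem:9th}(2) then produces the factor $\ts_{(\gamma\oplus\nl)[p_j,q_i]}$ times a product over the isolated $\alpha_s$. A short check of the exponent $\chi(q_i<\min\Cont(\alpha_s))+\chi(p_j>\max\Cont(\alpha_s))$ shows it equals $1$ precisely when $\Cont(\alpha_s)\cap[p_j,q_i]=\emptyset$ and $0$ otherwise, matching the contributions exactly.

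In the second case ($p_j>q_i+1$), the right-hand side is zero because $[p_j,q_i]$ is undefined and so $\ts_{(\gamma\oplus\nl)[p_j,q_i]}=0$ by convention. One must show the left-hand side also vanishes. From \eqref{eq:la(a,b)}, $\lambda(q_i,p_j-1)=\lambda\cup\lambda^+[q_i+1,p_j-1]$; then, using that $p_j>q_i+1$ forces (by the Lascoux--Pragacz structure) $i>j$ with disjoint strips $\theta_i,\theta_j$ separated by a content gap, together with the $\nl$-compatibility of $\mu$, one exhibits a cell of $\lambda^+[q_i+1,p_j-1]$ lying outside $\nu$. Therefore $\lambda(q_i,p_j-1)\not\subseteq\nu$, and Lemma~\ref{lem:9th}(1) yields $\ts_{\nu/\lambda(q_i,p_j-1)}=0$.

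The principal obstacle is the verification in the first case that the merged component coincides with $(\gamma\oplus\nl)[p_j,q_i]$ as a skew shape (so that their $\ts$-values are equal, not merely that they share the same set of cells). This requires a careful description of the adjacency between $\gamma[p_j,q_i]$ and each $\alpha_s$ in both the original ambient position inside $\nu$ and the glued position inside $\gamma\oplus\nl$, and it is here that the $\nl$-compatibility of $\mu$ plays the decisive role.
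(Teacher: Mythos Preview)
Your plan is essentially the paper's own proof: the same two-case split via Lemma~\ref{lem:C/C}, the same identification of connected components in Case~1 followed by Lemma~\ref{lem:9th}, and the same strategy in Case~2 of showing $\lambda(q_i,p_j-1)\not\subseteq\nu$. One refinement: in Case~1 the paper observes (and you will need) that partial overlap is impossible, i.e.\ $\Cont(\alpha_s)\cap[p_j,q_i]\ne\emptyset$ actually forces $\Cont(\alpha_s)\subseteq[p_j,q_i]$; this is why the merged component matches $(\gamma\oplus\nl)[p_j,q_i]$ exactly.

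In Case~2 your sketch via ``$i>j$ with a content gap'' is a detour that does not obviously reach the conclusion: the gap lies in $\lambda/\mu$, not in $\nu/\lambda$, so it says nothing directly about $\nu$. The paper's argument is more direct and does not use the relation between $\theta_i$ and $\theta_j$ at all. It simply shows $p_j-1\notin\Cont(\nl)$: if $p_j-1\in\Cont(\alpha_s)$ for some $s$, then $\nl$-compatibility of $\mu$ gives $\lambda^+[p_j-1,p_j]=\mu^+[p_j-1,p_j]$, but $p_j-1\in C_n(\mu)\setminus C_n(\lambda)$ (Lemma~\ref{lem:C/C}) forces $\mu^+[p_j-1,p_j]$ to be a vertical domino and $\lambda^+[p_j-1,p_j]$ to be a horizontal domino, a contradiction. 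Hence the unique cell of $\lambda^+$ with content $p_j-1$ lies outside $\nu$, and $\lambda(q_i,p_j-1)\not\subseteq\nu$ follows.
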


\begin{proof}
  By Lemma~\ref{lem:C/C}, we have $p_j-1\ne q_i$ for all $i,j$. Thus there are
  two cases $p_j\le q_i$ and $p_j-1>q_i$.

  \textbf{Case 1:} $p_j\le q_i$. Since
  $\lambda(q_i,p_j-1)=\lambda\setminus\lambda^0[p_j,q_i]$, the connected component
  $\alpha_s$ of $\nl$ remains the same in $\nu/\lambda(q_i,p_j-1)$ if
  $q_i<\min(\Cont(\alpha_s))$ or $p_j>\max(\Cont(\alpha_s))$. Otherwise,
  $\Cont(\alpha_s)\subseteq [p_j,q_i]$ and therefore $\alpha_s$ is contained in
  $(\gamma\oplus\nl)[p_j,q_i]$, which is a connected component of
  $\nu/\lambda(q_i,p_j-1)$. Thus by Lemma~\ref{lem:9th},
  \[
  \ts_{\nu/\lambda(q_i,p_j-1)}=\ts_{(\gamma\oplus\nl)[p_j,q_i]} \prod_{s=1}^{\ell} 
\ts_{\alpha_s}^{\chi(q_i<\min(\Cont(\alpha_s)) \mbox{\scriptsize{ or }} p_j>\max(\Cont(\alpha_s)))}.
\]
Since $q_i<\min(\Cont(\alpha_s))$ and $p_j>\max(\Cont(\alpha_s))$ cannot be satisfied 
at the same time, the above equation is the same as the one in the lemma.

\textbf{Case 2:} $p_j-1>q_i$. Since $\ts_{(\gamma\oplus\nl)[p_j,q_i]}=0$, it
suffices to show that $\nu\not\subseteq\lambda(q_i,p_j-1)$, which implies
$\ts_{\nu/\lambda(q_i,p_j-1)}=0$ by Lemma~\ref{lem:9th}. Since
$\lambda(q_i,p_j-1)=\lambda\cup\lambda^+[q_i+1,p_j-1]$ has a cell with content
$p_j-1$, to show $\nu\not\subseteq\lambda(q_i,p_j-1)$ it is enough to show that
$p_j-1\not\in \Cont(\nl)$.

For a contradiction suppose $p_j-1\in \Cont(\nl)$. Then
$p_j-1\in\Cont(\alpha_s)$ for some $s$. Since $\mu$ is $\nl$-compatible and
$p_j-1\in\Cont(\alpha_s)$, we must have $\lambda^+[p_j-1,p_j] =
\mu^+[p_j-1,p_j]$. On the other hand, we have $p_j-1\in C_n(\mu)\setminus
C_n(\lambda)$ by Lemma~\ref{lem:C/C}. Since $p_j-1\in C_n(\mu)$, the border
strip $\mu^+[p_j-1,p_j]$ must be a vertical domino. However, since $p_j-1\not\in
C_n(\lambda)$, the border strip $\lambda^+[p_j-1,p_j]$ must be a horizontal
domino. Then $\lambda^+[p_j-1,p_j] \ne \mu^+[p_j-1,p_j]$, which is a
contradiction. Therefore we must have $p_j-1\not\in \Cont(\nl)$, which completes
the proof.
\end{proof}

\begin{lem}\label{lem:LP2}
  Under the same assumptions in Lemma~\ref{lem:LP1}, we have
\[
\ts_{\nu/\mu} \prod_{s=1}^\ell \ts_{\alpha_s}^{k-1-I_s-J_s} 
    = \det \left( \ts_{(\gamma\oplus\nl)[p_j,q_i]}\right)_{i,j=1}^k,
\]
where $I_s$ is the number of $1\le i\le k$ such that
$q_i<\min(\Cont(\alpha_s))$
and $J_s$ is the number of $1\le j\le k$ such that
$p_j>\max(\Cont(\alpha_s))$.
\end{lem}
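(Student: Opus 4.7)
The plan is to derive Lemma~\ref{lem:LP2} by substituting the expression from Lemma~\ref{lem:LP1} into every entry of the determinant in equation \eqref{eq:main_LP2} of Theorem~\ref{thm:main_LP}, and then extracting the connected‐component factors via multilinearity of the determinant. Since $\alpha_1,\dots,\alpha_\ell$ are the connected components of $\nl$, Lemma~\ref{lem:9th}(2) immediately gives $\ts_{\nu/\lambda}^{k-1}=\prod_{s=1}^{\ell}\ts_{\alpha_s}^{k-1}$, which provides the exponent $k-1$ appearing on the left-hand side of \eqref{eq:main_LP2} once combined with $\ts_{\nu/\mu}$.

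After substituting Lemma~\ref{lem:LP1} into each $(i,j)$-entry of the determinant, the sign $(-1)^{\chi(p_j>q_i)}$ can be discarded: whenever $p_j>q_i$, Lemma~\ref{lem:C/C} forces $p_j-1\ne q_i$, so $p_j>q_i+1$ and $(\gamma\oplus\nl)[p_j,q_i]$ is undefined, causing the corresponding $\ts$ (and hence the entire entry) to vanish regardless of sign. The determinant therefore reduces to
\[
\det\!\left(\ts_{(\gamma\oplus\nl)[p_j,q_i]}\,\prod_{s=1}^{\ell}\ts_{\alpha_s}^{\chi(q_i<\min(\Cont(\alpha_s)))+\chi(p_j>\max(\Cont(\alpha_s)))}\right)_{i,j=1}^k.
\]

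For each fixed $s$, the factor $\ts_{\alpha_s}^{\chi(q_i<\min(\Cont(\alpha_s)))}$ depends only on the row index $i$ and $\ts_{\alpha_s}^{\chi(p_j>\max(\Cont(\alpha_s)))}$ depends only on the column index $j$. Pulling these factors out of each row and column by multilinearity produces the global scalar $\prod_{s=1}^{\ell}\ts_{\alpha_s}^{I_s+J_s}$ multiplying $\det(\ts_{(\gamma\oplus\nl)[p_j,q_i]})_{i,j=1}^k$. Substituting both sides of \eqref{eq:main_LP2} yields
\[
\prod_{s=1}^{\ell}\ts_{\alpha_s}^{k-1}\,\ts_{\nu/\mu}=\prod_{s=1}^{\ell}\ts_{\alpha_s}^{I_s+J_s}\cdot\det\!\left(\ts_{(\gamma\oplus\nl)[p_j,q_i]}\right)_{i,j=1}^k,
\]
and cancelling the common factor $\prod_{s}\ts_{\alpha_s}^{I_s+J_s}$—legitimate because the $\ts_{\alpha_s}$ are nonzero elements of the polynomial ring in the independent indeterminates $h_{r,s}$, which is an integral domain—produces the claimed identity. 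The main subtlety to check is the sign-dropping in the second step; the multilinear extraction and the final cancellation are routine once that is settled.
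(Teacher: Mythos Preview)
Your proof is correct and follows essentially the same approach as the paper's own proof: start from \eqref{eq:main_LP2}, use Lemma~\ref{lem:9th} to rewrite $\ts_{\nu/\lambda}^{k-1}$, substitute Lemma~\ref{lem:LP1} entrywise, drop the sign since the entry vanishes when $p_j>q_i$, factor out the row- and column-dependent powers of $\ts_{\alpha_s}$, and cancel. Your justification of the sign-dropping via Lemma~\ref{lem:C/C} (ruling out $p_j=q_i+1$) and your explicit integral-domain remark for the final cancellation are a bit more detailed than the paper's presentation, but the argument is the same.
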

\begin{proof}
  By \eqref{eq:main_LP2}, we have
\[
     \ts_{\nu/\mu} \ts_{\nu/\lambda}^{k-1}
    = \det \left( (-1)^{\chi(p_j>q_i)} \ts_{\nu/\lambda(q_i,p_j-1)}\right)_{i,j=1}^k.
\]
By Lemmas~\ref{lem:9th} and \ref{lem:LP1}, the above
equation can be written as
\begin{equation}\label{eq:ss}
     \ts_{\nu/\mu} \prod_{s=1}^\ell \ts_{\alpha_s}^{k-1}
    = \det \left( 
\ts_{(\gamma\oplus\nl)[p_j,q_i]} \prod_{s=1}^{\ell} 
\ts_{\alpha_s}^{\chi(q_i<\min(\Cont(\alpha_s)))+\chi(p_j>\max(\Cont(\alpha_s)))}
\right)_{i,j=1}^k,
\end{equation}
where the factor $(-1)^{\chi(p_j>q_i)}$ can be omitted because
$\ts_{(\gamma\oplus\nl)[p_j,q_i]}=0$ if $p_j>q_i$. By factoring out the factor
$\prod_{s=1}^{\ell} \ts_{\alpha_s}^{\chi(q_i<\min(\Cont(\alpha_s)))}$ from each
row $i$ and the factor $\prod_{s=1}^{\ell}
\ts_{\alpha_s}^{\chi(p_j>\max(\Cont(\alpha_s)))}$ from each column $j$ and
dividing both sides of \eqref{eq:ss} by these factors we obtain the desired
identity.
\end{proof}

Now we are ready to prove Theorem~\ref{thm:main_HG}.

\begin{proof}[Proof of Theorem~\ref{thm:main_HG}]
  As we have already discussed, it suffices to show the theorem for the case
  when $\theta$ is the Lascoux--Pragacz decomposition of $\lm$. Then, by
  Lemma~\ref{lem:LP2}, it suffices to show that $k-I_s-J_s=r_s$ for all $s$. By
  definition, $k-I_s-J_s$ is the number of border strips $\theta_i$ such that
  $p(\theta_i)\le \max(\Cont(\alpha_s))$ and $\min(\Cont(\alpha_s))\le
  q(\theta_i)$. Since $\mu$ is $\nl$-compatible and $\theta$ is the
  Lascoux--Pragacz decomposition, every border strip $\theta_i$ with
  $\Cont(\alpha_s)\cap\Cont(\theta_i)\ne\emptyset$ must satisfy
  $\Cont(\alpha_s)\subseteq\Cont(\theta_i)$. This implies that $p(\theta_i)\le
  \max(\Cont(\alpha_s))$ and $\min(\Cont(\alpha_s))\le q(\theta_i)$ if and only
  if $\Cont(\alpha_s)\subseteq\Cont(\theta_i)$. Therefore the number of such
  border strips $\theta_i$ is equal to $r_s$, which completes the proof.
\end{proof}

\section{A generalization of a converse of Hamel--Goulden's theorem}
\label{sec:conv-hamel-gould}

In this section we prove Theorem~\ref{thm:HG3}, which is a generalization of a
converse of Hamel--Goulden's theorem. We restate Theorem~\ref{thm:HG3} as
follows.

\begin{thm}\label{thm:HG_conv}
  Let $\alpha$ be any connected skew shape. Suppose that $(a_1,\dots,a_k)$ and
  $(b_1,\dots,b_k)$ are sequences of integers such that $\alpha[a_j,b_i]$ is a
  skew shape for all $i,j$. Then either $\det \left(
    s_{\alpha[a_j,b_i]}\right)_{i,j=1}^k = 0$ or there exists a skew shape
  $\rho$ such that
\[
\det \left( s_{\alpha[a_j,b_i]}\right)_{i,j=1}^k = \pm s_{\rho}.
\]
\end{thm}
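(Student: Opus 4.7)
The plan is to derive the theorem from the Bazin identity (Lemma~\ref{lem:BS}) by recognizing $(s_{\alpha[a_j,b_i]})_{i,j=1}^k$ as a Bazin matrix attached to an auxiliary partition $\lambda$, along the lines of the proof of Theorem~\ref{thm:main}.

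First I would make the standard reductions: if two of the $a_j$'s or two of the $b_i$'s coincide, then two columns (resp.\ rows) of the matrix agree and the determinant is $0$, so assume they are distinct; by permuting rows we may further assume $b_1>b_2>\cdots>b_k$ at the cost of an overall sign. I would then handle the main case, in which $\alpha$ is itself a border strip. Using the shift-invariance $s_\beta=s_{\beta+(r,r)}$ for ordinary Schur functions, choose $n$ and $\lambda\in\Par_n$ with $\alpha=\lambda^+[c,d]$ for some $c\le\min_j a_j$ and $d\ge\max_i b_i$, subject to $\{a_j-1:1\le j\le k\}\subseteq C_n(\lambda)$ and $\{b_i:1\le i\le k\}\cap C_n(\lambda)=\emptyset$; such a $\lambda$ exists because any border strip is a finite window of some $\lambda^+$, and the arithmetic conditions on $C_n(\lambda)$ can be met by enlarging $n$ and choosing the parts of $\lambda$ appropriately.

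Set $\vec a^{\mathrm{B}}=(a_1-1,\ldots,a_k-1)$, $\vec b^{\mathrm{B}}=(b_1,\ldots,b_k)$, $\vec c=C_n(\lambda)\setminus\vec a^{\mathrm{B}}$, and let $M=(h_{i-\lambda_j+j,\lambda_j-j+1})_{i\in\ZZ,\,1\le j\le n}$ be the matrix from Section~\ref{sec:bazin-sylv-ident} with $\nu=\lambda$. The same content bookkeeping as in the proof of Theorem~\ref{thm:main_LP} shows that each Bazin minor $M[b_j^{\mathrm{B}}\sqcup(\vec a^{\mathrm{B}}\setminus a_i^{\mathrm{B}})\sqcup\vec c]$ equals $\pm s_{\lambda^+[a_j,b_i]}=\pm s_{\alpha[a_j,b_i]}$, with an explicit sign determined by $\chi(a_j>b_i)$ and inversion counts. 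After transposing the matrix (which does not alter the determinant) and pulling out these signs (which contribute only an overall sign), the theorem's matrix becomes exactly the Bazin matrix. Since $\vec a^{\mathrm{B}}\sqcup\vec c$ is a rearrangement of $C_n(\lambda)$, one has $[\vec a^{\mathrm{B}}\sqcup\vec c]=\pm s_{\lambda/\lambda}=\pm 1$, so Lemma~\ref{lem:BS} collapses to
\[
\det(s_{\alpha[a_j,b_i]})_{i,j=1}^k=\pm\,[\vec b^{\mathrm{B}}\sqcup\vec c].
\]
The right-hand side is $0$ if $\vec b^{\mathrm{B}}\sqcup\vec c$ has a repeated entry (equivalently some $b_i\in\vec c$) and otherwise equals $\pm s_{\mu/\lambda}$, where $\mu\in\Par_n$ is the unique partition with $C_n(\mu)=(C_n(\lambda)\setminus\vec a^{\mathrm{B}})\cup\vec b^{\mathrm{B}}$; setting $\rho=\mu/\lambda$ finishes the border-strip case.

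For a general connected skew shape $\alpha$, the plan is to replace the Bazin step by the generalized Hamel--Goulden formula of Section~\ref{sec:gener-hamel-gould}: realize $\alpha=\gamma\oplus(\nu_0/\lambda_0)$ for a $\nu_0/\lambda_0$-compatible border strip $\gamma$, so that the entries $s_{\alpha[a_j,b_i]}$ fit the framework of Theorem~\ref{thm:main_HG}. The extra factor $\prod_s s_{\alpha_s}^{r_s-1}$ appearing there plays the role of $[\vec a^{\mathrm{B}}\sqcup\vec c]^{k-1}$ and can be arranged to equal $\pm 1$ by choosing $\lambda_0$ so that every connected component $\alpha_s$ is covered by a single strip in the relevant decomposition. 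The main obstacle I expect is arranging all the combinatorial constraints of Definitions~\ref{defn:compatible}--\ref{defn:comp2} simultaneously with the $C_n$-conditions used in the border-strip step, together with the degenerate cases such as $a_j-1=b_{i'}$ in which the matrix entry $s_\emptyset=1$ must match the trivial Bazin minor $[C_n(\lambda)]$; these bookkeeping points, rather than any genuinely new idea, should make up the bulk of the technical work.
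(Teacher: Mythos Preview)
Your border-strip argument contains a genuine gap: the partition $\lambda$ you need does not exist in general. Once you insist that $\alpha$ is (a diagonal shift of) $\lambda^+[c,d]$, the set $C_n(\lambda)\cap[c,d-1]$ is completely determined by the shape of $\alpha$: for $e\in[c,d-1]$ one has $e\in C_n(\lambda)$ if and only if the step in $\lambda^+$ from content $e$ to $e+1$ is vertical, and that step is precisely $\alpha[e,e+1]$. Thus your requirement $a_j-1\in C_n(\lambda)$ forces $\alpha[a_j-1,a_j]$ to be a vertical domino whenever $a_j>c$, and $b_i\notin C_n(\lambda)$ forces $\alpha[b_i,b_i+1]$ to be horizontal whenever $b_i<d$. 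Nothing in the hypotheses of the theorem guarantees this. A concrete failure: take $\alpha$ a single row with contents $0,\dots,4$, $k=2$, $(a_1,a_2)=(0,2)$, $(b_1,b_2)=(4,2)$. Then you would need $1\in C_n(\lambda)$, i.e.\ $\alpha[1,2]$ vertical, which is false; yet the determinant $h_5h_1-h_3^2=-s_{(5,4)/(2,1)}$ is nonzero. Enlarging $n$ cannot help, since that only affects $C_n(\lambda)$ at contents below $c$.

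Your sketch for the general case is closer to the paper's actual proof, which does apply Theorem~\ref{thm:main_HG} with $\gamma$ the inner strip of $\alpha$; but two pieces are missing or wrong. First, and most importantly, to feed the given $a_j$'s and $b_i$'s into Theorem~\ref{thm:main_HG} you must produce a skew shape $\lambda/\mu$ whose $\gamma$-decomposition has exactly $\{a_j\}$ and $\{b_i\}$ as its sets of starting and ending contents. This is the content of Lemma~\ref{lem:gamma_to_la}: one first shows (from $\det\neq 0$) that $|\{j:a_j\le a\}|\ge|\{i:b_i\le a\}|$ for all $a$, and then builds $\lambda/\mu$ inductively, choosing at each step whether to attach the new strip above or below according to whether $\gamma[b_1,b_1+1]$ is vertical or horizontal. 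This construction is the crux of the argument and is not supplied by your $C_n$-bookkeeping. Second, your claim that one can arrange $\prod_s s_{\alpha_s}^{r_s-1}=\pm 1$ by making each $r_s=1$ is false in general; the paper instead simply uses that a product of skew Schur functions is again a skew Schur function (place the factors on disjoint diagonals), absorbing the product into $\rho$.
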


To prove this theorem we need the following lemma.

\begin{lem}\label{lem:gamma_to_la}
  Let $\gamma$ be a border strip and let $(a_1<\dots<a_k)$ and $(b_1<\dots<b_k)$
  be sequences of integers such that $a_i,b_i\in \Cont(\gamma)$ for all $i$, and
  for any integer $a$,
\begin{equation}\label{eq:dyck}
|\{i: a_i\le a\}| \ge |\{i: b_i\le a\}|.
\end{equation}
Then there is a skew shape $\tau$ such that if $\theta=(\theta_1,\dots,\theta_r)$
is the decomposition of $\tau$ with cutting strip $\gamma$, then $r=k$ and
$\{p(\theta_1),\dots,p(\theta_k)\}=\{a_1,\dots,a_k\}$ and
$\{q(\theta_1),\dots,q(\theta_k)\}=\{b_1,\dots,b_k\}$.
\end{lem}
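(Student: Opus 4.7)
The plan is to construct $\tau$ by induction on $k$. The base case $k=0$ is trivial with $\tau=\emptyset$. For $k\ge 1$, I first observe that specializing the Dyck condition at $a=b_i$ yields $|\{j:a_j\le b_i\}|\ge i$, and since $a_1<\cdots<a_k$ this forces $a_i\le b_i$; hence every sub-border-strip $\gamma[a_i,b_i]$ is well defined and nonempty.

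To drive the induction I view the merged sequence of the $a_i$'s and $b_i$'s as a lattice path with up-steps and down-steps, whose height function
\[
P(v)=|\{i:a_i\le v\}|-|\{i:b_i\le v\}|
\]
is nonnegative by hypothesis, and I split into two cases according to the Dyck-path structure. \textbf{Case 1: internal return.} If $P(v_0)=0$ for some $a_1\le v_0<b_k$, let $k_1$ be the common count and split into two subproblems $(a_1,\dots,a_{k_1}),(b_1,\dots,b_{k_1})$ and $(a_{k_1+1},\dots,a_k),(b_{k_1+1},\dots,b_k)$. Each still satisfies the Dyck condition and the two content ranges are disjoint (one is contained in $[a_1,v_0]$, the other in $[v_0+1,b_k]$). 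Induction yields $\tau_1,\tau_2$, and because their strips live on disjoint sets of diagonal shifts of $\gamma$, they can be glued into a single skew shape by placing $\tau_1$ at an appropriate diagonal shift relative to $\tau_2$. \textbf{Case 2: no internal return.} Then the pair $(a_1,b_k)$ is outermost. The reduced sequences $(a_2,\dots,a_k),(b_1,\dots,b_{k-1})$ also satisfy the Dyck condition: for $a\ge b_k$ or $a<a_1$ nothing changes, while for $a_1\le a<b_k$ the absence of an internal return gives $P(a)\ge 1$, so subtracting $[a_1\le a]-[b_k\le a]=1$ leaves a nonnegative difference. Induction yields $\tau''$ with contents in $[a_2,b_{k-1}]\subsetneq[a_1,b_k]$, and I take $\tau=\gamma[a_1,b_k]\cup(\tau''+(t,t))$ for a shift $t$ chosen so that the shifted $\tau''$ fits inside the region controlled by $\gamma[a_1,b_k]$.

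In both cases the decomposition property is transparent: each constituent sub-strip $\gamma[a_i,b_{\pi(i)}]$ in $\tau$ lies on a distinct diagonal shift of $\gamma$ (forced by the matching being either non-overlapping in content or nested), so the intersection of $\tau$ with that shift recovers exactly that sub-strip, giving $r=k$ and the prescribed sets of $p_i$'s and $q_i$'s.

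The main obstacle is the geometric combination step, namely choosing the diagonal shift $t$ in Case~2 (and the relative shift in Case~1) so that the union is a bona fide skew shape $\lambda\setminus\mu$ with $\mu\subseteq\lambda$ both weakly decreasing. This reduces to verifying that, row by row, the left and right boundaries of the cells we add to $\tau''$ extend the partition boundaries of $\tau''$ compatibly with $\gamma$'s local geometry. The Dyck condition—via the fact that matched content intervals are either disjoint or properly nested—guarantees that a compatible shift always exists, completing the induction.
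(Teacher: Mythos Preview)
Your Case~2 has a genuine gap: pairing $a_1$ with $b_k$ ignores the geometry of $\gamma$, and this can make the gluing step impossible. Take $\gamma$ to be a horizontal strip with $\Cont(\gamma)=\{1,2,3\}$, and let $k=2$, $(a_1,a_2)=(1,2)$, $(b_1,b_2)=(2,3)$. There is no internal return (your height function gives $P(1)=P(2)=1$), so you are in Case~2 and you try to realize the strip $\gamma[a_1,b_k]=\gamma[1,3]$ together with $\tau''=\gamma[2,2]$. But when $\gamma$ is horizontal, the $\gamma$-decomposition of any skew shape is simply its list of rows, and no skew shape has one row with content set $\{1,2,3\}$ and another row with content set $\{2\}$: placing the three-cell row below forces $\mu$ to increase, and placing it above forces $\lambda$ to increase. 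So no diagonal shift $t$ makes $\gamma[1,3]\cup(\tau''+(t,t))$ a skew shape. (The lemma \emph{is} true for this input---the $2\times2$ block with $\lambda=(4,4)$, $\mu=(2,2)$ works---but its decomposition uses the pairing $(1,2),(2,3)$, not the pairing $(1,3),(2,2)$ your recursion forces.) Your closing claim that the Dyck/noncrossing structure ``guarantees that a compatible shift always exists'' is therefore false.

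The paper's proof avoids this precisely by letting the local shape of $\gamma$ dictate the pairing. It always peels off $b_1$ first and inspects $\gamma[b_1,b_1+1]$: if this domino is horizontal, it pairs $b_1$ with $a_1$ and attaches $\gamma[a_1,b_1]$ \emph{below} the inductively built shape; if vertical, it pairs $b_1$ with the largest $a_m\le b_1$ and attaches $\gamma[a_m,b_1]$ \emph{above}. The direction of $\gamma$ just past $b_1$ is exactly what tells you on which side the new strip can be legally adjoined as a row or column extension, and which $a_i$ must be its starting content for the partition inequalities to hold. Your recursion, being purely combinatorial on the sequences $(a_i)$ and $(b_i)$, cannot see this distinction, and that is why it selects an infeasible pairing in the example above. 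Your Case~1 gluing of content-disjoint pieces is fine, but it does not rescue the argument: the example lies entirely in Case~2.
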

\begin{proof}
  We proceed by induction on $k$. If $k=0$, we can take $\tau=\emptyset$. Let
  $k\ge1$ and suppose that the lemma is true for $k-1$.

  We consider the following two cases depending on the shape of
  $\gamma[b_1,b_1+1]$.

  \begin{description}
  \item[Case 1] $\gamma[b_1,b_1+1]$ is a horizontal domino. Let
    $\theta_0=\gamma[a_1,b_1]$. It is easy to see that the two sequences
    $(a_2<\dots<a_k)$ and $(b_2<\dots<b_k)$ satisfy \eqref{eq:dyck}. Thus, by
    the induction hypothesis, there is a skew shape $\sigma$ whose decomposition
    with cutting strip $\gamma$ is $(\theta_1,\dots,\theta_{k-1})$ such that
    $\{p(\theta_1),\dots,p(\theta_{k-1})\}=\{a_2,\dots,a_k\}$ and
    $\{q(\theta_1),\dots,q(\theta_{k-1})\}=\{b_2,\dots,b_k\}$. The fact that
    $a_1<a_2$ and $\gamma[b_1,b_1+1]$ is a horizontal domino guarantees that
    $\tau=\sigma\cup \theta_0$, where $\theta_0$ is attached below $\sigma$
    after an appropriate diagonal shift, is a skew shape satisfying the desired
    properties.
  \item[Case 2] $\gamma[b_1,b_1+1]$ is a vertical domino. Let
    $\theta_0=\gamma[a_m,b_1]$, where $m$ is the largest integer such that
    $a_m\le b_1$. It is easy to see that the two sequences
    $(a_1<\dots<a_{m-1}<a_{m+1}<\dots<a_k)$ and $(b_2<\dots<b_k)$ satisfy
    \eqref{eq:dyck}. Thus, by the induction hypothesis, there is a skew shape
    $\sigma$ whose decomposition with cutting strip $\gamma$ is
    $(\theta_1,\dots,\theta_{k-1})$ such that
    $\{p(\theta_1),\dots,p(\theta_{k-1})\}=\{a_1,\dots,a_{m-1},a_{m+1},\dots,a_k\}$
    and $\{q(\theta_1),\dots,q(\theta_{k-1})\}=\{b_2,\dots,b_k\}$. The fact that
    $a_i<a_m$, for all $1\le i<m$, and $\gamma[b_1,b_1+1]$ is a vertical domino
    guarantees that $\tau=\sigma\cup \theta_0$, where $\theta_0$ is attached
    above $\sigma$ after an appropriate diagonal shift, is a skew shape
    satisfying the desired properties.
  \end{description}

  The above two cases show that the lemma is true for $k$ and the proof is
  completed by induction.
\end{proof}

For an illustration of the construction in the proof of
Lemma~\ref{lem:gamma_to_la}, consider the border strip $\gamma$ and the two
sequences $(a_1,a_2,a_3)$ and $(b_1,b_2,b_3)$ in Figure~\ref{fig:to_skew}. Since
$\gamma[b_1,b_1+1]$ is a horizontal domino, we construct the border strip
$\gamma[a_1,b_1]$ as in Figure~\ref{fig:to_skew1} (a). Since $\gamma[b_2,b_2+1]$
is a vertical domino, we construct the border strip $\gamma[a_3,b_2]$ as in
Figure~\ref{fig:to_skew1} (b). Finally we construct the border strip
$\gamma[a_2,b_3]$ as in Figure~\ref{fig:to_skew1} (c). By combining these border
strips after appropriate diagonal shifts we obtain the skew shape shown in
Figure~\ref{fig:to_skew}.

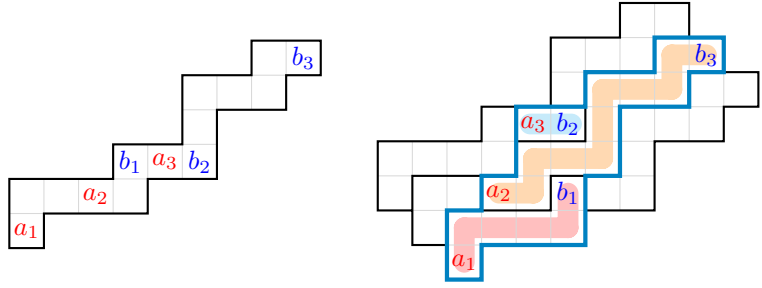
\begin{figure}
  \centering
  \begin{tikzpicture}[scale=.46]
  \draw[color=gray!30!white]	(0,1)--(1,1)--(1,2)
		(2,1)--(2,2)
		(3,1)--(3,2)--(4,2)--(4,3)
		(5,2)--(5,3)--(6,3)
		(5,4)--(6,4)--(6,5)
		(7,4)--(7,5)--(8,5)--(8,6);
   \draw[thick] (0,0)--(1,0)--(1,1)--(4,1)--(4,2)--(6,2)--(6,4)--(8,4)--(8,5)--(9,5)--(9,6)--(7,6)--(7,5)--(5,5)--(5,3)--(3,3)--(3,2)--(0,2)--(0,0)--cycle;
 \node[] at (.5, .5) {\color{red}$a_1$};
  \node[] at (2.5, 1.5) {\color{red}$a_2$};
   \node[] at (4.5, 2.5) {\color{red}$a_3$};
    \node[] at (3.5, 2.5) {\color{blue}$b_1$};
  \node[] at (5.5, 2.5) {\color{blue}$b_2$};
   \node[] at (8.5, 5.5) {\color{blue}$b_3$};
      \node[] at (4.5, -.5) {\phantom{\color{red}$a_3$}};
  \end{tikzpicture}\qquad
  \begin{tikzpicture}[scale=.46]  
  \fill[fill=pink, rounded corners] (2.2, .2) rectangle (2.8, 1.8)
  			(2.2, 1.2) rectangle (5.8, 1.8)
			(5.2, 1.2) rectangle (5.8, 2.8);
  \fill[fill=orange!30!white, rounded corners] (3.2, 2.2) rectangle (4.8, 2.8)
  			(4.2, 2.2) rectangle (4.8, 3.8)
			(4.2, 3.2) rectangle (6.8, 3.8)
			(6.2, 3.2) rectangle (6.8, 5.8)
			(6.2, 5.2) rectangle (8.8, 5.8)
			(8.2, 5.2) rectangle (8.8, 6.8)
			(8.2, 6.2) rectangle (9.8, 6.8);
  \fill[fill=cyan!20!white, rounded corners] (4.1, 4.2) rectangle (5.9, 4.8);
  \draw[color=gray!30!white] (7,7)--(8,7)
  		(5,6)--(10,6)
		(5,5)--(10,5)
		(3,4)--(8,4)
		(0,3)--(8,3)
		(1,2)--(6,2)
		(2,1)--(3,1)
		(1,3)--(1,4)
		(2,2)--(2,4)
		(3,1)--(3,4)
		(4,1)--(4,5)
		(5,1)--(5,5)
		(6,1)--(6,7)
		(7,2)--(7,7)
		(8,4)--(8,8)
		(9,4)--(9,7)
		(10,5)--(10,6);
  \draw[ thick] (2,1)--(1,1)--(1,3)--(3,3)
  		(1,2)--(0,2)--(0,4)--(3,4)--(3,5)--(4,5)
		(5,5)--(5,7)--(7,7)--(7,8)--(9,8)--(9,7)
		(6,2)--(8,2)--(8,4)--(10,4)--(10,5)--(11,5)--(11,6)--(10,6)
		(3,2)--(5,2)--(5,3)--(6,3)
		(4,4)--(6,4)--(6,5);
  \draw[line width=1.6pt, color=frenchblue!90!cyan] (2,0)--(3,0)--(3,1)--(6,1)--(6,3)--(7,3)--(7,5)--(9,5)--(9,6)--(10,6)--(10,7)--(8,7)--(8,6)--(6,6)--(6,5)--(4,5)--(4,3)--(3,3)--(3,2)--(2,2)--(2,0)--cycle;
   \node[] at (2.5, .5) {\color{red}$a_1$};
  \node[] at (3.5, 2.5) {\color{red}$a_2$};
   \node[] at (4.5, 4.5) {\color{red}$a_3$};
    \node[] at (5.5, 2.5) {\color{blue}$b_1$};
  \node[] at (5.5, 4.5) {\color{blue}$b_2$};
   \node[] at (9.5, 6.5) {\color{blue}$b_3$};
    \end{tikzpicture}
  \caption{A border strip $\gamma$ and the two sequences $(a_1,a_2,a_3)$ and
    $(b_1,b_2,b_3)$ on the left and the corresponding skew shape and its
    decomposition on the right.}
  \label{fig:to_skew}
\end{figure}

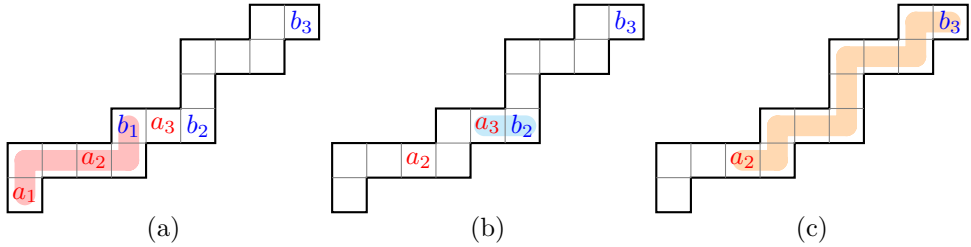
\begin{figure}
  \centering
   \begin{tikzpicture}[scale=.46]
    \fill[fill=pink, rounded corners] (.2, .2) rectangle (.8, 1.8)
  			(.2, 1.2) rectangle (3.8, 1.8)
			(3.2, 1.2) rectangle (3.8, 2.8);
  \draw[thick] (0,0)--(1,0)--(1,1)--(4,1)--(4,2)--(6,2)--(6,4)--(8,4)--(8,5)--(9,5)--(9,6)--(7,6)--(7,5)--(5,5)--(5,3)--(3,3)--(3,2)--(0,2)--(0,0)--cycle;
  \draw[color=gray]	(0,1)--(1,1)--(1,2)
		(2,1)--(2,2)
		(3,1)--(3,2)--(4,2)--(4,3)
		(5,2)--(5,3)--(6,3)
		(5,4)--(6,4)--(6,5)
		(7,4)--(7,5)--(8,5)--(8,6);
 \node[] at (.5, .5) {\color{red}$a_1$};
  \node[] at (2.5, 1.5) {\color{red}$a_2$};
   \node[] at (4.5, 2.5) {\color{red}$a_3$};
    \node[] at (3.5, 2.5) {\color{blue}$b_1$};
  \node[] at (5.5, 2.5) {\color{blue}$b_2$};
   \node[] at (8.5, 5.5) {\color{blue}$b_3$};
      \node[] at (4.5, -.5) {(a)};
  \end{tikzpicture}~
   \begin{tikzpicture}[scale=.46]
     \fill[fill=cyan!20!white, rounded corners] (4.1, 2.2) rectangle (5.9, 2.8);
  \draw[thick] (0,0)--(1,0)--(1,1)--(4,1)--(4,2)--(6,2)--(6,4)--(8,4)--(8,5)--(9,5)--(9,6)--(7,6)--(7,5)--(5,5)--(5,3)--(3,3)--(3,2)--(0,2)--(0,0)--cycle;
  \draw[color=gray]	(0,1)--(1,1)--(1,2)
		(2,1)--(2,2)
		(3,1)--(3,2)--(4,2)--(4,3)
		(5,2)--(5,3)--(6,3)
		(5,4)--(6,4)--(6,5)
		(7,4)--(7,5)--(8,5)--(8,6);
  \node[] at (2.5, 1.5) {\color{red}$a_2$};
   \node[] at (4.5, 2.5) {\color{red}$a_3$};
  \node[] at (5.5, 2.5) {\color{blue}$b_2$};
   \node[] at (8.5, 5.5) {\color{blue}$b_3$};
       \node[] at (4.5, -.5) {(b)};
  \end{tikzpicture}~
   \begin{tikzpicture}[scale=.46]
     \fill[fill=orange!30!white, rounded corners] (2.2, 1.2) rectangle (3.8, 1.8)
  			(3.2, 1.2) rectangle (3.8, 2.8)
			(3.2, 2.2) rectangle (5.8, 2.8)
			(5.2, 2.2) rectangle (5.8, 4.8)
			(5.2, 4.2) rectangle (7.8, 4.8)
			(7.2, 4.2) rectangle (7.8, 5.8)
			(7.2, 5.2) rectangle (8.8, 5.8);
  \draw[thick] (0,0)--(1,0)--(1,1)--(4,1)--(4,2)--(6,2)--(6,4)--(8,4)--(8,5)--(9,5)--(9,6)--(7,6)--(7,5)--(5,5)--(5,3)--(3,3)--(3,2)--(0,2)--(0,0)--cycle;
  \draw[color=gray]	(0,1)--(1,1)--(1,2)
		(2,1)--(2,2)
		(3,1)--(3,2)--(4,2)--(4,3)
		(5,2)--(5,3)--(6,3)
		(5,4)--(6,4)--(6,5)
		(7,4)--(7,5)--(8,5)--(8,6);
  \node[] at (2.5, 1.5) {\color{red}$a_2$};
   \node[] at (8.5, 5.5) {\color{blue}$b_3$};
      \node[] at (4.5, -.5) {(c)};
  \end{tikzpicture}
  \caption{The decomposition in Figure~\ref{fig:to_skew} is obtained recursively
    using these three border strips.}
  \label{fig:to_skew1}
\end{figure}

We now prove Theorem~\ref{thm:HG_conv}.

\begin{proof}[Proof of Theorem~\ref{thm:HG_conv}]
  We may assume $\det \left(s_{\alpha[a_j,b_i]}\right)_{i,j=1}^k \ne 0$ because
  otherwise there is nothing to prove. Since permuting the $a_i$'s or $b_i$'s
  only changes the sign of the determinant, we may assume $a_1<\dots<a_k$ and
  $b_1<\dots<b_k$. We first claim that the condition \eqref{eq:dyck} holds for
  all $a\in \ZZ$. For a contradiction, suppose that \eqref{eq:dyck} does not
  hold for some $a\in \ZZ$. Then we have $a_1<\dots<a_r\le a < a_{r+1}<\dots<
  a_k$ and $b_1<\dots<b_s\le a < b_{s+1}<\dots< b_k$ for some integers $0\le
  r<s\le k$. Then $s_{\alpha[a_j,b_i]}=0$ for all $1\le i\le s$ and $r+1\le j\le
  k$, which implies $\det \left(s_{\alpha[a_j,b_i]}\right)_{i,j=1}^k = 0$, which
  is a contradiction to the assumption given in the beginning of the proof.
  Therefore \eqref{eq:dyck} holds for all $a\in \ZZ$.

\begin{figure}
  \centering
   \begin{tikzpicture}[scale=.46]
   \draw[color=gray] (0,1)--(3,1)
   	    (3,2)--(4,2)
	    (5,3)--(8,3)
	    (5,4)--(9,4)
	    (7,5)--(9,5)
	    (1,0)--(1,2)
	    (2,0)--(2,2)
	    (3,1)--(3,2)
	    (4,2)--(4,3)
	    (5,2)--(5,3)
	    (6,2)--(6,5)
	    (7,2)--(7,5)
	    (8,3)--(8,6);
 \draw[line width=1.2pt] (0,0)--(3,0)--(3,1)--(4,1)--(4,2)--(8,2)--(8,3)--(9,3)--(9,6)--(7,6)--(7,5)--(5,5)--(5,3)--(3,3)--(3,2)--(0,2)--(0,0)--cycle; 
 \node[] at (.5, .5) {\color{red}$a_1$};
  \node[] at (2.5, 1.5) {\color{red}$a_2$};
   \node[] at (4.5, 2.5) {\color{red}$a_3$};
    \node[] at (3.5, 2.5) {\color{blue}$b_1$};
  \node[] at (5.5, 2.5) {\color{blue}$b_2$};
   \node[] at (8.5, 5.5) {\color{blue}$b_3$};
   \end{tikzpicture}\qquad\qquad
     \begin{tikzpicture}[scale=.46]
     \fill[fill=red!10!white] (1,0) rectangle (3,1)
     		(6,2) rectangle (8,4)
		(8,3) rectangle (9,5);
   \draw[thick, color=dredcolor] (1,0)--(3,0)--(3,1)
   		(2,0)--(2,1)
		(6,2)--(8,2)--(8,3)--(9,3)--(9,5)
		(7,2)--(7,4)
		(8,3)--(8,4)
		(6,3)--(8,3)
		(8,4)--(9,4);
     \draw[color=gray] (0,1)--(1,1)--(1,2)
     		(2,1)--(2,2)
		(3,1)--(3,2)--(4,2)--(4,3)
		(5,2)--(5,3)--(6,3)
		(5,4)--(6,4)--(6,5)
		(7,4)--(7,5)--(8,5)--(8,6);
     \draw[line width=1.3pt] (0,0)--(1,0)--(1,1)--(4,1)--(4,2)--(6,2)--(6,4)--(8,4)--(8,5)--(9,5)--(9,6)--(7,6)--(7,5)--(5,5)--(5,3)--(3,3)--(3,2)--(0,2)--(0,0)--cycle;
   \node[] at (.5, .5) {\color{red}$a_1$};
  \node[] at (2.5, 1.5) {\color{red}$a_2$};
   \node[] at (4.5, 2.5) {\color{red}$a_3$};
    \node[] at (3.5, 2.5) {\color{blue}$b_1$};
  \node[] at (5.5, 2.5) {\color{blue}$b_2$};
   \node[] at (8.5, 5.5) {\color{blue}$b_3$};
       \end{tikzpicture}
  \caption{A connected skew shape $\alpha$ and two sequences $a_1<a_2<a_3$ and
    $b_1<b_2<b_3$ on the left. The inner strip $\gamma$ of $\alpha$ is shown on
    the right with thick lines, where the cells in $\alpha\setminus\gamma$ are
    colored red. }
  \label{fig:inner_gamma}
\end{figure}
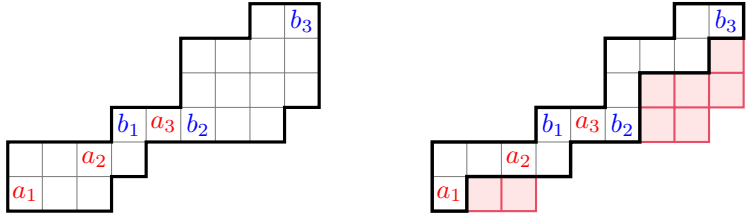

\begin{figure}
  \centering
     \begin{tikzpicture}[scale=.46]
      \fill[fill=pink, rounded corners] (.2, .2) rectangle (.8, 1.8)
  			(.2, 1.2) rectangle (3.8, 1.8)
			(3.2, 1.2) rectangle (3.8, 2.8);
  \fill[fill=orange!30!white, rounded corners] (1.2, 2.2) rectangle (2.8, 2.8)
  			(2.2, 2.2) rectangle (2.8, 3.8)
			(2.2, 3.2) rectangle (4.8, 3.8)
			(4.2, 3.2) rectangle (4.8, 5.8)
			(4.2, 5.2) rectangle (6.8, 5.8)
			(6.2, 5.2) rectangle (6.8, 6.8)
			(6.2, 6.2) rectangle (7.8, 6.8);
  \fill[fill=cyan!20!white, rounded corners] (2.1, 4.2) rectangle (3.9, 4.8);
        \fill[fill=red!10!white] (1,0) rectangle (3,1)
     		(5,3) rectangle (7,5)
		(7,4) rectangle (8,6);
   \draw[thick, color=dredcolor] (1,0)--(3,0)--(3,1)
   		(2,0)--(2,1)
		(5,3)--(7,3)--(7,4)--(8,4)--(8,6)
		(6,3)--(6,5)
		(7,4)--(7,5)
		(5,4)--(7,4)
		(7,5)--(8,5);
   \draw[color=gray] (0,1)--(1,1)
   			      (1,2)--(4,2)
			      (2,3)--(4,3)
			      (2,4)--(5,4)
			      (4,5)--(5,5)
			      (6,6)--(7,6)
			      (1,1)--(1,2)
			      (2,1)--(2,3)
			      (3,1)--(3,5)
			      (4,3)--(4,5)
			      (5,5)--(5,6)
			      (6,5)--(6,6)
			      (7,6)--(7,7);			   
  \draw[thick] (1,2)--(3,2)--(3,3)--(4,3)
  			(2,4)--(4,4)--(4,5);
    \draw[line width=1.6pt, color=frenchblue!90!cyan] (0,0)--(1,0)--(1,1)--(4,1)--(4,3)--(5,3)--(5,5)--(7,5)--(7,6)--(8,6)--(8,7)--(6,7)--(6,6)--(4,6)--(4,5)--(2,5)--(2,3)--(1,3)--(1,2)--(0,2)--(0,0)--cycle;
     \node[] at (.5, .5) {\color{red}$p_1$};
  \node[] at (1.5, 2.5) {\color{red}$p_2$};
   \node[] at (2.5, 4.5) {\color{red}$p_3$};
    \node[] at (3.5, 2.5) {\color{blue}$q_1$};
  \node[] at (3.5, 4.5) {\color{blue}$q_2$};
   \node[] at (7.5, 6.5) {\color{blue}$q_3$};
      \end{tikzpicture}
  \caption{The boundary of the skew shape $\lm$ is colored blue and the cells in
    $\nl$ are colored red. Here, using the $\alpha$, $\gamma$, $a_i$'s and
    $b_i$'s in Figure~\ref{fig:inner_gamma}, $\lm$ is the skew shape whose
    decomposition with respect to the cutting strip $\gamma$ is
    $\theta=(\theta_1,\theta_2,\theta_3)$ such that
    $\{p(\theta_1),p(\theta_2),p(\theta_3)\}=\{a_1,a_2,a_3\}$ and
    $\{q(\theta_1),q(\theta_2),q(\theta_3)\}=\{b_1,b_2,b_3\}$, and the connected
    components of $\nl$ are obtained from those of $\alpha/\gamma$ by
    appropriate diagonal shifts.}
  \label{fig:nl}
\end{figure}
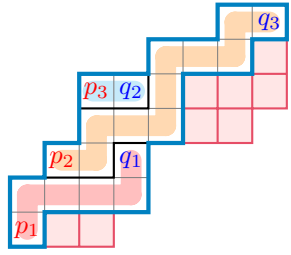

Now let $\gamma$ be the inner strip of $\alpha$ as shown in
Figure~\ref{fig:inner_gamma}. Then by Lemma~\ref{lem:gamma_to_la} there is a
skew shape $\lm$ whose decomposition with respect to the cutting strip $\gamma$
is $\theta=(\theta_1,\dots,\theta_k)$ such that
$\{p(\theta_1),\dots,p(\theta_k)\}=\{a_1,\dots,a_k\}$ and
$\{q(\theta_1),\dots,q(\theta_k)\}=\{b_1,\dots,b_k\}$. Let $\nu$ be the
partition obtained from $\lambda$ by attaching each connected component of
$\alpha\setminus\gamma$ below $\lambda$ after an appropriate diagonal shift, see
Figure~\ref{fig:nl}. The assumption that $\alpha[a_j,b_i]$ is a skew shape, for
all $i,j$, implies that $\mu$ and $\gamma$ are $\nl$-compatible. Therefore, by
Theorem~\ref{thm:main_HG} with $\ts_{\tau}$ specialized to $s_{\tau}$ for all
skew shapes $\tau$, we have
\begin{equation}\label{eq:det = s}
  \det \left( s_{(\gamma\oplus\nl)[a_j,b_i]}\right)_{i,j=1}^k = \pm s_{\nu/\mu}
  \prod_{s=1}^\ell s_{\alpha_s}^{r_s-1},
\end{equation}
where $\alpha_1,\dots,\alpha_\ell$ are the connected components of $\nl$, and
$r_s$ is the number of strips $\theta_i$ such that
$\Cont(\alpha_s)\subseteq\Cont(\theta_i)$.

By the construction, we have $(\gamma\oplus\nl)[a_j,b_i]=\alpha[a_j,b_i]$. Since
a product of skew Schur functions can be expressed as a single skew Schur
function, the theorem follows from \eqref{eq:det = s}.
\end{proof}

\section*{Acknowledgments}

The authors would like to thank Byung-Hak Hwang for helpful discussions
and Alejandro Morales for useful comments.


\end{document}